\theoremstyle{definition}
 \newcommand{\comments}[1]{}
\newcommand{\Sp}{\mathbb S}
\newcommand{\R}{\mathbb{R}}
\newcommand{\C}{\mathbb{C}}
\newcommand{\Ord}{\text{Ord}}
\newcommand{\Wbar}{\overline{W}}
\newcommand{\mubar}{\overline{\mu}}
\newtheorem{theorem}{Theorem}[section]
\newtheorem{lemma}[theorem]{Lemma}
\newtheorem{remark}[theorem]{Remark}
\newtheorem{claim}[theorem]{Claim}
\newtheorem{proposition}[theorem]{Proposition}
\newtheorem{definition}[theorem]{Definition}
\newtheorem{example}{Example}[section]
\DeclarePairedDelimiter\ceil{\lceil}{\rceil}
\numberwithin{equation}{section}
\title[Rectifiable strata for harmonic maps to buildings]{Rectifiability of the singular strata for harmonic maps to Euclidean buildings}
\thanks{
CB supported in part by NSF DMS CAREER-1750254}
\author[Breiner]{Christine Breiner}
\address{Brown University\\
Department of Mathematics\\
Providence, RI}
\email{christine\underline{ }breiner@brown.edu}
\author[Dees]{Ben K. Dees}
\address{Brown University\\
Department of Mathematics\\
Providence, RI}
\email{benjamin\underline{ }dees@brown.edu}
\begin{document}
\maketitle
\begin{abstract}
    We define a natural notion of the singular strata for harmonic maps into $F$-connected complexes (which include locally finite Euclidean buildings), and prove the rectifiability of these strata.  We additionally establish bounds on the Minkowski content for certain quantitative strata, following the rectifiable Reifenberg program of \cite{nv17}.  This builds on a result of the second author \cite{dees}, which showed that the full singular set is $(n-2)$-rectifiable.
\end{abstract}

\section{Introduction}

The study of harmonic map to metric spaces with upper curvature bounds has been crucial in answering rigidity questions with application in algebraic geometry and geometric group theory. See for example \cite{gromov-schoen,kat97,katram98,linshaf,lasram96} for more historical results and applications and \cite{bdm,bddm,dm,dmw,bakker2024} for more recent work.  Gromov and Schoen \cite{gromov-schoen} initiated the study in order to prove  $p$-adic superrigidity for rank 1 symmetric spaces of non-compact type. Algebraic groups over $p$-adic fields act on {locally finite} Euclidean buildings by isometries; \cite{gromov-schoen} showed that equivariant harmonic maps to such targets possess enough regularity to invoke a Bochner formula, which proves such maps are constant.  Coupled with a result of Corlette \cite{cor}, this result extended the celebrated superrigidity (and arithmeticity) result of Margulis \cite{marg} to rank $1$ for both Archimedean and $p$-adic groups.  Recently, the authors, in collaboration with Mese \cite{bdm}, studied harmonic maps to non-locally finite Euclidean buildings. Algebraic groups over non-Archimedean fields with non-discrete valuation act on Euclidean buildings which are not locally finite. By proving equivariant harmonic maps to such targets are constant, \cite{bdm} extended the result of \cite{gromov-schoen} from $p$-adic superrigidity to non-Archimedean superrigidity for rank $\geq 1$. See also \cite{bf} for a dynamical approach for rank $\geq 2$.

In proving such rigidity statements via harmonic maps, it is often important to establish optimal regularity for the maps; in particular, the singular set of these maps must have Hausdorff codimension at least $2$. Recently, the second author improved this result for $F$-connected targets (which include locally finite Euclidean buildings) \cite{dees}. He showed that the singular set of a harmonic map to an $F$-connected complex is $(n-2)$-rectifiable. Here we further refine the regularity results and prove that in this setting the $k$-th stratum of the singular set is $k$-rectifiable.

\begin{theorem}\label{thm:k-rect}
For $X$ an $F$-connected complex, $\Omega\subset\R^n$ an open domain, and $u:\Omega\to X$ a harmonic map, the singular set $\mathcal{S}^k(u)$ is countably $k$-rectifiable.
\end{theorem}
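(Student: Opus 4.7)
The plan is to adapt the rectifiable Reifenberg framework of Naber--Valtorta \cite{nv17} to the $F$-connected setting, building on the tangent-map and monotonicity machinery from \cite{gromov-schoen} and \cite{dees}. Following the standard quantitative stratification paradigm, the first step is to refine $\mathcal{S}^k(u)$ via quantitative strata: declare $x$ to be $(k,\epsilon,r)$-symmetric if, after rescaling and normalization, $u|_{B_r(x)}$ is $L^2$-close to a homogeneous harmonic map invariant under translations along some $k$-dimensional subspace; then set
\[
\mathcal{S}^k_{\epsilon,r}(u) = \{x\in\Omega : u \text{ is not } (k+1,\epsilon,s)\text{-symmetric for any } s\in[r,1]\},
\]
and observe $\mathcal{S}^k(u) = \bigcup_{\epsilon>0}\bigcap_{r>0}\mathcal{S}^k_{\epsilon,r}(u)$. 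It therefore suffices to prove $k$-rectifiability for each fixed $\epsilon>0$.

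The central analytic step is a Minkowski content estimate of the form
\[
\mathcal{H}^n\bigl(B_r(\mathcal{S}^k_{\epsilon,r}(u))\cap B_{1/2}\bigr) \le C(n,\epsilon)\,r^{n-k},
\]
which, via the rectifiable Reifenberg theorem of \cite{nv17}, simultaneously yields $k$-rectifiability and $k$-Minkowski bounds for each $\mathcal{S}^k_{\epsilon,r}(u)$. I would verify the hypotheses of that theorem using three ingredients adapted to the $F$-connected target: (i) monotonicity of the Almgren frequency along NPC-valued harmonic maps, as in \cite{gromov-schoen}, yielding an upper semicontinuous density which is invariant precisely at symmetry points; (ii) a \emph{cone-splitting} lemma, asserting that if $u$ is $(k,\epsilon,s)$-symmetric about a collection of points which are quantitatively independent in an affine $(k+1)$-plane, then $u$ is in fact $(k+1,\epsilon',s/2)$-symmetric; and (iii) \emph{quantitative differentiation}, giving a universal bound $N(\eta)$ on the number of dyadic scales at which the frequency can drop by more than $\eta$. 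Together, these allow the covering/tree construction of \cite{nv17} to produce good packing estimates at every scale.

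The main obstacle I anticipate is establishing the cone-splitting (ii) in the $F$-connected category. Away from smooth targets, "symmetry" must be phrased intrinsically via homogeneous tangent maps into products of the form $\R^k\times Y$ where $Y$ is a building-type complex, and the closeness notion must be robust under the branching geometry allowed by $F$-connectedness. I would exploit the tangent-map compactness and homogeneous blow-up classification worked out in \cite{dees}, together with the rigidity of translation-invariant harmonic maps into NPC targets, to upgrade approximate pointwise symmetries to a genuine splitting of the domain factor. Quantitative differentiation (iii) follows from monotonicity by the standard pigeonhole argument, and monotonicity (i) is already available; so once cone-splitting is in place, the Reifenberg machinery of \cite{nv17} applies and gives $k$-rectifiability of $\mathcal{S}^k_{\epsilon,r}(u)$. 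The theorem then follows upon taking the countable union over $\epsilon\downarrow 0$.
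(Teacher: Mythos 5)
Your outline correctly identifies the Naber--Valtorta machinery (quantitative strata, frequency monotonicity, cone splitting, quantitative differentiation, rectifiable Reifenberg), and that is indeed the engine the paper runs. But there is a genuine gap at the very first step: you take the classical definition of $\mathcal{S}^k(u)$ via $(k+1)$-homogeneity of tangent maps and assert $\mathcal{S}^k(u)=\bigcup_{\epsilon}\bigcap_r\mathcal{S}^k_{\epsilon,r}(u)$. In the $F$-connected setting that identity is false for the stratification one actually wants, and the covering machinery breaks if you use it. The obstruction is the splitting phenomenon: when the image lies in a totally geodesic subcomplex $\R^j\times Y$, the tangent map at a singular point can be dominated by the regular factor $u_1$ and hence be far more homogeneous than the singular structure warrants. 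Concretely, for $h(r,\Theta,z)=(g(r,\Theta),f_K(z))$ on $\R^4\times\R^2$ (Example \ref{ex:example3}), the singular set is the $4$-plane $\R^4\times\{0\}$, yet the tangent map at the origin is $2$-homogeneous, so the origin is almost $(2,\epsilon,s)$-symmetric at every scale while the nearby singular set is nowhere near any $2$-plane. Your ingredient (ii) --- that approximate symmetry plus pinching forces the quantitative stratum into a tubular neighborhood of a $k$-plane --- therefore fails outright for the classically defined strata, and no choice of closeness notion repairs it without first removing the Euclidean factor.

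The paper's route is to build the splitting into the definition of the strata: $\mathcal{S}^k(u)=\bigcup_{j=0}^{N-1}\mathcal{S}^k_j(u)$, where $\mathcal{S}^k_0(u)$ consists of singular points admitting no nontrivial $\R^j$ splitting (there $u(x)$ is a $0$-cell and $\Ord_u(x)>1$), and $\mathcal{S}^k_j(u)$ is defined by requiring $x\in\mathcal{S}^k_0(u_2)$ for the singular factor $u_2$ of every splitting. The Reifenberg program (Theorems \ref{thm:k-minsk-local}, \ref{thm:k-rect-local}, \ref{thm:k-rect-quant}) is carried out only for $\mathcal{S}^k_0$, where Lemmas \ref{lem:cone-split} and \ref{lem:ord-pinch} do give the containment you need; the remaining strata are then handled by a separate, elementary covering argument: on each ball $B_{\sigma_j(x)}(x)$ one has $\mathcal{S}^k_j(u)\cap B_{\sigma_j(x)}(x)\subseteq\mathcal{S}^k_0(u_2^x)$, and a countable cover finishes the proof. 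This reduction is not a cosmetic relabeling --- it is the step your proposal is missing, and without it the theorem as stated (for the paper's $\mathcal{S}^k(u)$) does not follow. A secondary discrepancy: the paper measures closeness to homogeneous competitors in $C^0$ normalized by $(I_\phi(x,r)/r^{n-1})^{1/2}$ rather than in $L^2$, which forces extra work (Lemmas \ref{lem:hom-grad-bound} and \ref{lem:hom-ord-bound}) to get uniform convergence of the comparison maps in the compactness arguments; your $L^2$ formulation would run into the problem that $L^2(\Omega,X_C)$ is not a Hilbert space, so weak-compactness arguments for the competitors are not available.
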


\begin{remark}
    The authors expect that Theorem \ref{thm:k-rect} should also hold when $\Omega$ is a general Riemannian domain, with suitable technical alterations to the proof provided here.
\end{remark}

Classically, for a stationary harmonic map $u:M \to N$ between smooth Riemannian manifolds, where $N$ is compact with no boundary, the {regular set} $\mathcal R$ is defined by
\[
\mathcal R:= \{x \in M: \exists r>0 \text{ such that } u|B_r(x) \in C^0\}
\]and the {singular set} is given by $\mathcal S:= M \backslash \mathcal R$. (Note that $u$ continuous implies $u \in C^\infty$ in this case.)
One then defines the singular strata $\mathcal{S}^k$ by
\[
\mathcal{S}^k:=\{x\in\mathcal{S}:\text{no tangent map at }x\text{ is }(k+1)\text{-homogeneous}\}.
\]
In the current context, the regular set is defined differently; we want points in the regular set to be precisely those points about which the map does not ``see" the singular structure of the target. For harmonic $u:\Omega \to X$ where $X$ is an $F$-connected complex, we let the {\bf regular set} of $u$ be defined by 
\begin{align*}
\mathcal R(u):=\{x \in &\Omega: \exists r>0  \text{ such that }u(B_r(x)) \text{ maps into a totally geodesic }\\& \text{ subcomplex which is isometric to a subset of Euclidean space}\}.
\end{align*}Note that in this case, $u|{B_r(x)}$ can be regarded as an energy minimizer between Euclidean spaces and is therefore analytic on that ball. The {\bf singular set} of $u$ is again defined as the complement of $\mathcal R(u)$:
\[
\mathcal S(u):= \Omega \backslash \mathcal R(u).
\]
If one then attempts to define $\mathcal S^k(u)$ as in the classical setting, the examples in Section \ref{sec:sing strata examples} illustrate the difficulty. 
Indeed, a natural definition of the singular strata must take into account the way in which the image of a harmonic map admits a ``splitting," in the sense that we can ``factor off an $\R^j$," and the impact that will have on detecting homogeneity. 

Because the target is simplicial, away from the vertices there is a natural splitting of the complex itself. That is, we can always  split the image of $u$ at any point $x$ for which $u(x)$ is not a $0$-cell. If $u(x)$ lies in the interior of a $j$-cell, $j>0$, then there is a neighborhood $U$ of $u(x)$ which is isometric to a neighborhood in $\R^j\times Y^{N-j}$, where $Y$ is an $F$-connected complex of dimension $N-j$. Since $u$ is continuous by \cite[Theorem 2.4.6]{korevaar-schoen1}, there is a ball $B_\sigma(x)$ so that $u(B_\sigma(x))\subseteq U$ which means that we can write $u|B_\sigma(x)=(u_1,u_2):B_\sigma(x)\to\R^j\times Y^{N-j}$.

In addition to this natural splitting of the target, at domain points $x$ where $\Ord_u(x)=1$, \cite[Lemma 6.3.(ii)]{gromov-schoen} implies that the mapping splits in the following sense. If $u:\Omega \to X$ is harmonic and $\Ord_u(x)=1$ then there exists $\sigma>0$ such that
\[
u|B_\sigma(x)=(u_1,u_2):B_\sigma(x)\to \R^j\times Y\subseteq X
\]
where $u_1:B_\sigma(x)\to\R^j$ and $u_2:B_\sigma(x)\to Y$ are both harmonic maps.  The map $u_1$ has rank $j$ at each point of $B_\sigma(x)$, and $\Ord_{u_2}(x)>1$.  This splitting is uniquely determined by $u$ and $x$.

The trouble with attempting to use the classical definition is as follows. If, around a singular point, the harmonic map ``splits" in the sense described above, then a tangent map might only detect the {\it regular} part of the mapping. Thus, one cannot expect to determine the right stratum for the singular point from the homogeneity of its tangent map. Our definition for $\mathcal S^k(u)$ (cf. Definition \ref{def:strata}) provides the most natural way to appropriately detect the correct homogeneity. We initially consider only those singular points around which there is no splitting. For singular points that admit a splitting, we consider the stratum for only the map $u_2$ and ignore the regular part $u_1$. 

 Theorem \ref{thm:k-rect} follows from proving $k$-rectifiability for the set of singular points with no splitting (cf. Theorem \ref{thm:k-rect-quant}) and using this reduction for other singular points. 
The rectifiability proof follows from the covering arguments first developed by Naber and Valtorta \cite{nv17,nv18} for harmonic and approximate harmonic maps, also using the form refined by De Lellis~ et.~al.~\cite{dmsv} for $Q$-valued functions and some techniques therein. 
One notable technical change is in defining the quantitave strata (cf. Definition \ref{def:quant-hom})
Since our harmonic maps possess uniform Lipschitz bounds, we use $C^0$ distance rather than $L^2$ distance to compare our maps to $k$-homogeneous ones. As a result of this new definition, for certain cone-splitting arguments (cf. Section \ref{sec:conesplittingtranslating}), we will need a sequence of homogeneous maps to converge uniformly. This presents an additional minor technical challenge which is overcome using homogeneity and properties of the order function for harmonic maps (cf. Lemmas \ref{lem:hom-grad-bound}, \ref{lem:hom-ord-bound}).

The stratification used in this paper broadly resembles the ``factorization" used to define strata for $Q$-valued maps given in \cite[Section 5.3]{fms}.  We expect, using the techniques outlined here, that the analogue of Theorem \ref{thm:k-rect} will hold for those strata as well.

\subsection{Outline of the paper}

The body of this work is structured as follows:
\begin{itemize}
    \item In Section \ref{sec:prelim}, we first collect the definitions and previously-known results which are relevant to our problem.  This includes the definitions of $F$-connected complexes and harmonic maps, and the appropriate notion of a tangent map.  We also define the order function and the smoothed order function, homogeneity, and $k$-homogeneity. 
    \item In Section \ref{sec:sing strata}, we define the singular strata in our setting. We begin by defining what it means for a map to ``split" at a point $x$. Section \ref{sec:sing strata examples} provides a few illustrative examples to motivate our definition.  In the final two subsections we define the strata and the quantitative strata.
    \item In Section \ref{sec:results} we state the main results of the paper, first local (conical) versions and then global versions. All of these results are stated for singular points with no splitting. The proof of Theorem \ref{thm:k-rect} is contained in Section \ref{sec:proof of main}.
    \item The results in Section \ref{sec:results} are proved using the framework of \cite{nv17}; to apply this framework we need several ``cone splitting" results, which are in Section \ref{sec:conesplitting}, and we need a result linking our monotone quantity to the mean flatness, which is supplied in Section \ref{sec:meanflat}.
    \item In Section \ref{sec:sketchy-proof}, we prove the main (local) theorems of Section \ref{sec:results}.
    \item In Appendix \ref{sec:arxivproof}, we provide a proof of a technical ``intermediate covering", Proposition \ref{prop:initial-cover} and a proof of the main covering theorem, Theorem \ref{thm:k-2nd-cover}. 
\end{itemize}

\section{Preliminaries}\label{sec:prelim}

\subsection{CAT(0) spaces}\label{CAT0sec}
A CAT(0) space $(X,d)$ is a geodesic space of non-positive curvature, where curvature is defined by triangle comparison. Particular examples of CAT(0) spaces include $F$-connected complexes (the focus of this paper) as well as Hadamard manifolds.  We refer to \cite{bridson-haefliger} for a complete introduction to these spaces.  As we work only with simplicial complexes, a complete exposition is not necessary for this paper.

\subsection{$F$-connected complexes}
\begin{definition}\label{def:fcon}
A CAT(0) simplicial complex $(X,d)$ of dimension $N$ is said to be {\bf $F$-connected} if:
\begin{enumerate}
\item Each simplex of $X$ is a Euclidean simplex; i.e. is isometric to the image of a standard Euclidean simplex
\[
\Delta^\ell:=\{(x^0,\dots,x^\ell):x^0+\dots+x^\ell=1\}
\]
(where $0\leq \ell\leq N$) under an invertible affine linear transformation.
\item $X$ is a ``homogeneous" complex in the sense that every $\ell$-simplex, $\ell\in \{0, \dots, N-1\}$, is the face of a simplex of dimension $\ell+1$.
\item Any two adjacent simplices $S,S'$ of $X$ are contained in a totally geodesic subcomplex $X_{S,S'}$ which is isometric to a subset of the Euclidean space $\R^N$.
\end{enumerate}

(We say that two simplices of $X$ are adjacent if their intersection is nonempty.  A subcomplex of $X$ is the union of some of the simplices of $X$.  A subcomplex $Y$ of $X$ is said to be totally geodesic if, for any two points $x,y\in Y$, the geodesic between $x$ and $y$ is contained in $Y$ as well.)

An {\bf $m$-flat} of $X$ is a totally geodesic subset of $X$ isometric to $\R^m$.

For a point $P\in X$, where $X$ is $F$-connected, we denote the {\bf tangent cone at $P$} by $X_P$.  Observe that a neighborhood of $P$ in $X$ is isometric to a neighborhood of the origin in $X_P$.  For this reason, we will sometimes blur the distinction between flats of $X_P$ and flats of $X$ which contain $P$.

\end{definition}

A significant class of $F$-connected complexes are the locally finite Euclidean buildings.  To realize a Euclidean building as an $F$-connected complex, one subdivides the chambers of the building into simplices as necessary.  For a metric geometer's introduction to Euclidean buildings, we refer the reader to \cite{kleiner-leeb}.

Since $F$-connected complexes are locally conical and our analysis is alway local, we will restrict our attention to conical complexes.
\begin{definition}
    A CAT(0) simplicial complex $(X_C,d)$ is a {\bf conical $F$-connected complex} of dimensions $N$ with cone point $0_X$ if:
    \begin{enumerate}
        \item Each $\ell$-simplex of $X_C$, except the vertex $0_X$, is a Euclidean simplicial cone; i.e. the image of $\{(x^1,\dots,x^\ell): x^i\geq0\}$ under an invertible affine linear transformation.
        \item $X_C$ is homogeneous in the sense that every $\ell$-simplex, $\ell\in\{0, \dots, N-1\}$, is the face of a simplex of dimension $\ell+1$.
        \item Any two simplices $S,S'$ of $X_C$ are contained in a totally geodesic subcomplex $X_{S,S'}$ which is isometric to a subset of $\R^N$.
    \end{enumerate}
\end{definition}
\subsection{Harmonic maps}
Since $F$-connected complexes are CAT(0) spaces, harmonic (or ``energy minimizing") maps into these exist and are generally well-behaved. The theory of harmonic maps into CAT(0) metric spaces which we will follow in this paper was developed in \cite{gromov-schoen}, \cite{korevaar-schoen1}, and \cite{korevaar-schoen2}; we refer the interested reader to these for full details.  These papers introduce the Sobolev spaces $W^{1,2}(\Omega,X)$, where $\Omega$ is a Euclidean domain and $X$ a CAT(0) metric space, and generalize the energy density function $|\nabla u|^2$ to these spaces as well. (See also \cite{jost} for a construction of equivariant harmonic maps between symmetric spaces of non-compact type and \cite{heinonen} and the references therein for alternate definitions of Sobolev spaces.) They also generalize the trace to this setting, allowing us to speak meaningfully of the boundary values of $u\in W^{1,2}(\Omega,X)$.  If two functions $u,v\in W^{1,2}(\Omega, X)$ have the same trace, we write $u=v$ on $\partial\Omega$, as usual.

\begin{definition}
    For a map $u\in W^{1,2}(B_r(x),X)$ the {\bf energy of $u$ on $B_r(x)$} is
    \[
   E_u(x,r):=\int_{B_r(x)}|\nabla u(y)|^2dy.
    \]

    We say that a map $u\in W^{1,2}(B_r(x),X)$ is {\bf harmonic} (or energy-minimizing) if for every $v$ so that $u=v$ on $\partial B_r(x)$,
    \[
    E_u(x,r)\leq E_v(x,r).
    \]
    A map $u\in W^{1,2}(\Omega,X)$ is {\bf harmonic} if for every $x\in \Omega$ there exists an $r>0$ such that $u|B_r(x)$ is harmonic by the previous definition.
\end{definition}

By \cite[Theorem 2.4.6]{korevaar-schoen1}, a harmonic map $u:\Omega\to X$ into a CAT(0) space, and in particular into an $F$-connected complex $X$, is continuous and moreover locally Lipschitz continuous, with Lipschitz constant at $p\in\Omega$ bounded in terms of the dimension $n$ of the domain, the total energy of $u$, and the distance from $p$ to $\partial\Omega$.  

Because $F$-connected complexes are locally conical and harmonic maps into them are continuous, if $u:\Omega\to X$ is such a map into an $F$-connected complex, we can analyze it {\em locally} as a map into a conical complex.  Precisely, for each $p\in\Omega$, there is a neighborhood $V\subseteq X$ of $u(p)$ isometric to a neighborhood $U\subseteq X_{u(p)}$ of the tangent cone at $u(p)$, and a ball $B_r(p)$ so that $u(B_r(p))\subseteq V$.
 
\subsection{Order and smoothed order}\label{subsec:tangent-maps}

To define the lower strata appropriately, indeed to understand the subtleties involved, one first needs to understand the monotone quantity and properties of tangent maps in this setting.

\begin{definition}\label{def:terminology}For a map $u:\Omega\subset\R^n\to X$, where $X$ is an $F$-connected complex, let
\begin{align}
\nonumber I(x,r)&=\int_{\partial B_r(x)}d^2(u(y),u(x))dy\\
\nonumber\Ord(x,r)&=\frac{rE(x,r)}{I(x,r)}.
\end{align}
If there are multiple maps $u_i$ under consideration, we use a subscript, i.e. $I_{u_i}$, to disambiguate.
\end{definition}
    In \cite{gromov-schoen} it is shown that for each $x$, $\Ord(x,r)$ is a monotonically increasing function of $r$. Therefore, the following definition makes sense. \begin{definition}The {\bf order function} $\Ord:\Omega \to [1,\infty)$ is given by
    \[
    \Ord(x):=\lim_{r\to0}\Ord(x,r).
    \]
    
\end{definition}

For technical reasons, in our analysis of maps into conical $F$-connected targets, it is convenient to consider the {\bf smoothed order}. Fix once and for all $\phi:\R_{\geq0} \to \mathbb R$ such that $\phi\equiv 1$ on $[0,0.5]$, $\phi(x)=2-2x$ on $[0.5,1]$, and $\phi = 0$ otherwise. Then for a harmonic $u:\Omega \subset \mathbb R^n \to X_C$, define
\begin{align}
 \nonumber E_\phi(x,r)&=\int_{\R^n}|\nabla u(y)|^2\phi\left(\frac{|y-x|}{r}\right)dy\\
\nonumber I_\phi(x,r)&=-\int_{\R^n} d^2(u(y),0_X)|y-x|^{-1}\phi'\left(\frac{|y-x|}{r}\right)dy\\
\nonumber\Ord_\phi(x,r)&=\frac{rE_\phi(x,r)}{I_\phi(x,r)}
\\ 
\label{def:smooth-ord}
\Ord_\phi(x,0)&:= \lim_{r\to 0} \Ord_\phi(x,r).\end{align}When we need to distinguish between various maps, we add the map to the subscript, e.g. $E_{\phi,u}$. Like the classical order function, the smoothed order $\Ord_\phi(x,r)$ is monotone nondecreasing in $r$ (cf. \cite[Proposition 4.1]{dees} for a proof).

Additionally, we have the following technical identities, also proved in \cite[Proposition 4.1]{dees}:
\begin{align}
    \label{eq:energy-deriv}\partial_rE_\phi(x,r)&=\frac{n-2}{r}E_\phi(x,r)-\frac{2}{r^2}\int_{\R^m}|\partial_{\nu_x}u(y)|^2|y-x|\phi'\left(\frac{|y-x|}{r}\right)dy\\
    \label{eq:height-technical}s^{1-n}I_\phi(x,s)&=r^{1-n}I_\phi(x,r)\exp\left(-2\int_s^r\Ord_\phi(x,t)\frac{dt}{t}\right).
\end{align}
In application, these identities are useful for establishing greater control on the energy and height of $u$ at $x$.  In the presence of Lipschitz bounds on $u$, the former identity gives good control on the radial derivative of $E_\phi$.  In the presence of bounds on $\Ord_\phi$, the latter identity lets us bound (say) $I_\phi(x,4)$ in terms of $I_\phi(x,1)$.  It also establishes the monotonicity of the scale-invariant height $r^{1-n}I_\phi(x,r)$.  In fact, more can be said---the unsmoothed height $I(0,r)$ has the same scale-invariant monotonicity.  Precisely, if $0<s<r$,
\[
s^{1-m}I(0,s)\leq r^{1-m}I(0,r)
\]
(cf. \cite[Equation (4.11)]{dees}).

We will often be interested in sequences of harmonic maps. To guarantee that a subsequence converges, the maps of interest $u:B_s(0) \to X_C$ will satisfy the following condition:
\begin{equation}\tag{$\star$}\label{eq:Icond}
\text{Ord}_\phi(0,s) \leq \Lambda. 
\end{equation}Note that the $s>0$ will be fixed in each context but may vary from one application to the next.

\subsection{Tangent maps and homogeneity}
Gromov and Schoen, in \cite{gromov-schoen}, develop a notion of tangent maps for harmonic maps into $F$-connected complexes. (The situation for more general CAT(0) space targets is developed in \cite{korevaar-schoen1,korevaar-schoen2} and extended to CAT(1) targets in \cite{BFHMSZ}.) Tangent maps are homogeneous but, in contrast to the smooth setting, are non-constant even at smooth points. In particular, at points of order 1, the tangent map is a non-constant affine map. 

Following \cite{gromov-schoen}, let $u:B_\sigma(x) \subset \mathbb R^n \to (X,d)$ be a harmonic map into an $F$-connected complex $X$ and change coordinates in the domain so that $x=0$. Then for each $\lambda>0$ define the map 
\begin{equation}\label{eq:rescalings}
u_{\lambda}:B_{\sigma/\lambda}(0) \to (X,d_\lambda)
\end{equation}
by $u_\lambda(y) := u(\lambda y)$, where $d_\lambda =(\lambda^{1-n}I(0,\lambda))^{-1/2}d$. 

The scalings on domain and target are set up so that for sufficiently small $\lambda>0$ the functions $u_\lambda$ have uniform energy bounds on $B_1$ and by the regularity theory of \cite{gromov-schoen}, a subsequence converges uniformly to a non-constant, harmonic map $u_{*}:B_1 \to X_{u(x)}$, where $X_{u(x)}$ denotes the tangent cone of $X$ over $u(x)$. 
\begin{definition}\label{def:tangentmap}
    Any subsequential limit $u_{*}:B_1 \to X_{u(x)}$ realized by the rescaling process above is called a {\bf tangent map to $u$ at $x$}. 
\end{definition}

By construction, any tangent map $u_{*}$ is homogeneous of degree $\alpha=\text{Ord}_u(x).$ Because the tangent maps of a harmonic map are maps into $X_{u(x)}$, to define homogeneity precisely we need only consider maps into conical complexes.  By scaling with respect to the cone point, in conical $F$-connected complexes, we can define homogeneity extrinsically, rather than intrinsically as in \cite{gromov-schoen}.
\begin{definition}Let $X_C$ be a conical $F$-connected complex with cone point $0_X$.
    We say that a map $h:\mathbb R^n\to X_C$ is {\bf homogeneous of degree $\alpha$ about $x_0$} if $h(x_0)=0_X$ and for all $z\in\mathbb R^n$ and $\lambda>0$
\begin{equation}\label{eq:hom-def}
h(x_0+\lambda z)= \lambda^\alpha h(x_0+z).
\end{equation}
When $x_0=0$ we simply say $h$ is homogeneous of degree $\alpha$.

\end{definition}

In \eqref{eq:hom-def}, we interpret the scalar multiplication on the right-hand side to mean scaling the point $h(x_0+z)$ by a factor of $\lambda^\alpha$ about the cone point $0_X$.  (Formally, it is the unique point at distance $\lambda^\alpha d(0_X,h(x_0+z))$ along the geodesic ray from $0_X$ to $h(x_0+z)$, where the uniqueness of this geodesic ray is a consequence of conicality.)

When we want to restrict our domain, i.e. $h:B_r(x_0) \to X_C$, we say $h$ is homogeneous of degree $\alpha$ on $B_r(x_0)$ if \eqref{eq:hom-def} holds for every $z,\lambda$ where expressions on both the left and right of \eqref{eq:hom-def} make sense.

\begin{definition}
    A homogeneous degree $\alpha$ map $h:\R^n\to X_C$ is  {\bf $k$-homogeneous about the point $x_0$} if it is homogeneous about the point $x_0$ and there is a $k$-dimensional subspace $V\subseteq\R^n$ so that
    \[
    h(x+v)=h(x)
    \]
    for all $x\in\R^n$ and all $v\in V$.

\end{definition}
Note that for all $v \in V$, one necessarily has that $h(x_0+v) = 0_X$. Moreover, by repeated applications of \cite[Lemma 7.4]{dees}, if $h$ is homogeneous about the points $x_0,x_1,\dots,x_k$, where $\{x_i-x_0\}_{i=1}^k$ are linearly independent, then $h$ is $k$-homogeneous, and a subspace witnessing $k$-homogeneity is $V=\text{span}\{x_i-x_0\}_{i=1}^k$.  

\section{The Singular Strata}\label{sec:sing strata}

To characterize the splittings systematically we introduce the following definitions.

\begin{definition}
    For a map $u:\Omega\to X$, we say that $u$ {\bf splits on $B_\sigma(x)$ with $\R^j$ factor} if there is an $F$-connected complex $Y$ so that
    \[
    u|B_\sigma(x)=(u_1,u_2):B_\sigma(x)\to\R^j\times Y\subseteq X
    \]
    where $\R^j\times Y$ is an isometric totally geodesic subcomplex of $X$.  Moreover, we call $(\sigma,u_1, u_2, j, Y)$ {\bf splitting data for $u$ at $x$}.
\end{definition}
 \noindent   Observe that every map (trivially) splits with $\R^0$ factor (with $Y=X$).

    For $x\in\Omega$, and each $j\in\{0,1,\dots,N\}$, consider
    \begin{equation}\label{eq:sigmaj}
    \sigma_j(x):=\sup\{\sigma\geq0:u\text{ splits on }B_\sigma(x)\text{ with }\R^j\text{ factor}\}.
    \end{equation}
    If $\sigma_j=0$, there is no nontrivial ball $B=B_\sigma(x)$ on which $u|B$ splits with $\R^j$ factor. Let
    \begin{equation*}
   J(x):= \max\{j \in \{0, \dots, N\}: \sigma_j(x) >0\}.
   \end{equation*}
  Then, there exist $(u_1, u_2):B_{\sigma_J(x)}(x) \to \R^J\times Y$ such that $u|B_{\sigma_J(x)}(x)=(u_1, u_2)$.
\begin{definition}\label{def:splits}
    
    We call $(\sigma_{J(x)},u_1,u_2,J(x),Y)$ {\bf optimal splitting data for $u$ at $x$}, and we say that {\bf $u$ splits at $x$ with optimal $\R^{J(x)}$ factor}.
\end{definition}

Observe that $x$ is a regular point for $u$ if and only if $J(x)=N$. Alternately, if $J(x)=0$ then $\Ord_u(x)>1$ and $u(x)$ is a $0$-cell of $X$.

\subsection{Motivating Examples}
\label{sec:sing strata examples}
  We shall see that Euclidean factors in the target space should be disregarded when studying the singular set, which motivates our definition of the singular strata (cf. Definition \ref{def:strata}).  

We begin with some informative examples.  While all of these examples have $F$-connected targets, we do not explicitly give the simplicial structures witnessing this fact, as doing so would obscure the behavior we wish to highlight.

\begin{example}
    
The standard singularity model for harmonic maps into buildings is a map from the complex disc to a tripod, $f_{Y}:B_1(0)\to Y$, outlined in e.g. the introductions of \cite{gromov-schoen,sun}.  The tripod $Y$ consists of three rays meeting at a common point $0_Y$. Divide the disc into three sectors, bounded by rays from $0$ to $e^{i\pi/3}$, $e^{-i\pi/3}$, and $-1$.  The map takes each of these sectors to one of the rays of $Y$, with $f_Y(0)=0_Y$.  It satisfies
\[
d(f_Y(re^{i\theta},0_Y))=r^{3/2}\cos(3\theta/2)
\]
and hence we often think of this map as ``$\text{Re}(z^{3/2})$" as a function from $B_1(0)\subset\C$ to $Y$. This map is regular everywhere except $0$, because this is the only point at the boundary of all three sectors.  At $0$, the order of this map is $\frac32$, as one might expect. See Figure \ref{fig:tripod} for a rough picture with a few representative level sets of $f_Y$ marked out. Note that in this example, $J(0)=0$ as neither the map nor the target split (or in our parlance, the map splits with optimal $\R^0$ factor).

\begin{figure}
\begin{center}
\begin{tikzpicture}
\draw[black](0,0) circle (2);
\draw[black,very thick](1,2*0.866)--(0,0);
\draw[black,very thick](1,-2*0.866)--(0,0);
\draw[black,very thick](-2,0) node [anchor=east] {$f_Y^{-1}(0_Y)$} --(0,0);
\draw[cyan,very thick] (1.5,1.323) node [anchor=west] {$f_Y^{-1}(P_1)$} .. controls (0.75,0) .. (1.5,-1.323) ;
\draw[magenta,very thick] (-1.5*0.5-1.323*0.866,-0.5*1.323+1.5*0.866)  .. controls (-0.5*0.75,0.75*0.866) .. (-1.5*0.5+1.323*0.866,0.5*1.323+1.5*0.866) node [anchor=south] {$f_Y^{-1}(P_2)$};
\draw[olive,very thick] (-1.5*0.5-1.323*0.866,0.5*1.323-1.5*0.866) .. controls (-0.5*0.75,-0.75*0.866) .. (-1.5*0.5+1.323*0.866,-0.5*1.323-1.5*0.866) node [anchor=north] {$f_Y^{-1}(P_3)$};

\draw[->] [black] (3-0.5,0) .. controls (4-0.75,0.5) .. (5-1,0);
\draw[black] (4-0.75,-0.25) circle (0pt) node [anchor=south]{$f_Y$};

\draw[black](6-1,0)--(8-1,0);
\draw[black](6-1,0)--(6-1-1,2*0.866);
\draw[black](6-1,0)--(6-1-1,-2*0.866);
\filldraw[cyan] (7.5-1,0) circle (2pt) node [anchor=north] {$P_1$};
\filldraw[magenta] (6-1-0.5*1.5,1.5*0.866) circle (2pt) node [anchor=west] {$P_2$};
\filldraw[olive] (6-1-0.5*1.5,-1.5*0.866) circle (2pt) node [anchor=west] {$P_3$};
\filldraw[black] (6-1,0) circle (2pt) node [anchor=east] {$0_Y$};

\end{tikzpicture}
\caption{The domain of $f_Y$ is shown on the left as a disc; the target tripod on the right. Four points on the tripod are marked---one on each leg, and the common point $0_Y$. Their respective preimages are labeled in the domain.}
\label{fig:tripod}
\end{center}

\end{figure}
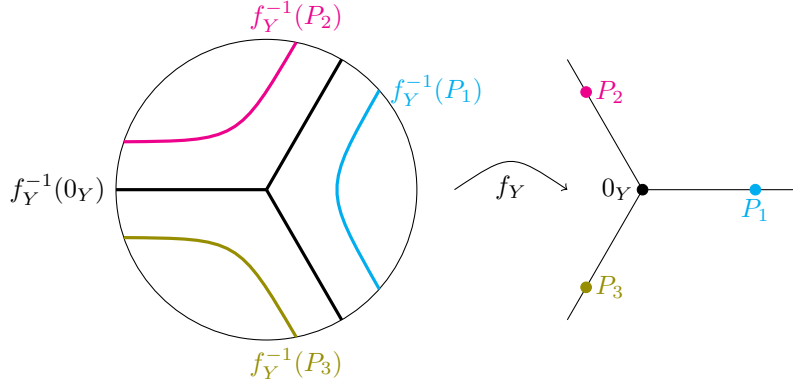
\end{example}

The next two examples demonstrate how target splittings allow the regular part of a harmonic map to overwhelm the singular part when limiting to the tangent map. We first consider the target splitting at an order one point, where we know splitting is guaranteed by \cite[Theorem 6.3]{gromov-schoen}.
\begin{example}
    Consider the map $u\colon (0,1)\times B_1(0)\to \R\times Y$ defined by $u(t,z)=(t,f_Y(z))$ where $f_Y$ is as in the previous example. Figure \ref{fig:tricyl} sketches the behavior of this map.  Each horizontal ``slice" of the domain is mapped to the corresponding horizontal ``slice" of the target complex by the map $f_Y$. The map $u$ splits at $0$ with optimal $\R^1$-factor where $u_1=(t,0)$ and $u_2 = (0, f_Y)$ and the singular set of $u$ is precisely $(0,1)\times\{0\}$. Our intuition tells us that this set should be the $1$-stratum $\mathcal{S}^1(u)$. 

However, the tangent map to $u$ at any point in $(0,1)\times\{0\}$ is $u_*(t,z)=\left(\sqrt{\frac3{4\pi}}t,0\right)$, a linear map of rank $1$ from $\R^3$ to $\R\times Y$.  Since $u_*$ is independent of $z$, by definition it is $2$-homogeneous and thus the classical definition would imply that any point in $(0,1)\times\{0\}$ should lie in the $2$-stratum.  This does not agree with our intuition that the singular points of $u$ should lie in the $1$-stratum.

The reason for this is that while the {\em singular behavior} of $u$ results from the $u_2$ factor, the {\em tangent map at singular points} is determined by the regular factor $u_1$.  The solution is to ignore the regular factor, and consider the tangent map of only the singular factor.  When we do so, we observe that the map $(t,z)\mapsto f_Y(z)$ has one direction of constancy (the $t$-axis of the domain), and conclude that the singular points are in the $1$-stratum, as expected.

\begin{figure}
    \begin{center}
\begin{tikzpicture}

\draw[very thick] (0,0) ellipse (2 and 1);
\draw[thick] (-2,0)--(-2,5);
\draw (2,0)--(2,5);
\draw (0,0)--(0,5);
\draw[thick] (-2,0)--(0,0);
\draw[thick] (-2,5)--(0,5);
\draw (0,0)--(0.9,0.916)--(0.9,5.916)--(0,5);
\draw[very thick] (0,0)--(1.1,-0.816)--(1.1,5-0.816)--(0,5);
\draw[dashed] (0,2.8) ellipse (2 and 1);
\draw[dashed] (0,2.8)--(-2,2.8);
\draw[dashed] (0,2.8)--(.9,2.8+0.816);
\draw[dashed] (0,2.8)--(1,2.8-0.816);

\draw[very thick] (0,5) ellipse (2 and 1);
\draw[dashed] (0,0) ellipse (2 and 1);

\draw(7,0)--(7,5);
\draw(9,0)--(9,5);
\draw(9,0)--(7,0);
\draw(9,5)--(7,5);
\draw(7,5)--(6.1,5.916);
\draw (7,5)--(5.9,5-0.816)--(5.9,-0.816)--(7,0);
\draw (7,0)--(6.1,0.916);
\draw (6.1,5.916)--(6.1,0.916);
\draw[dashed] (7,2.8)--(9,2.8);
\draw[dashed] (7,2.8)--(6.1,2.8+0.916);
\draw[dashed] (7,2.8)--(5.9,2.8-0.816);

\draw[->] [black] (3,2.5) .. controls (4,3) .. (5,2.5);
\draw[black] (4,2.25) circle (0pt) node [anchor=south]{$u$};

\end{tikzpicture}
\end{center}
\caption{The domain of $u$ is sketched on the left, and the target complex on the right.  The domain is separated into three ``sectors," corresponding to the preimages of the three half-planes which comprise $Y\times I$.}
\label{fig:tricyl}
\end{figure}
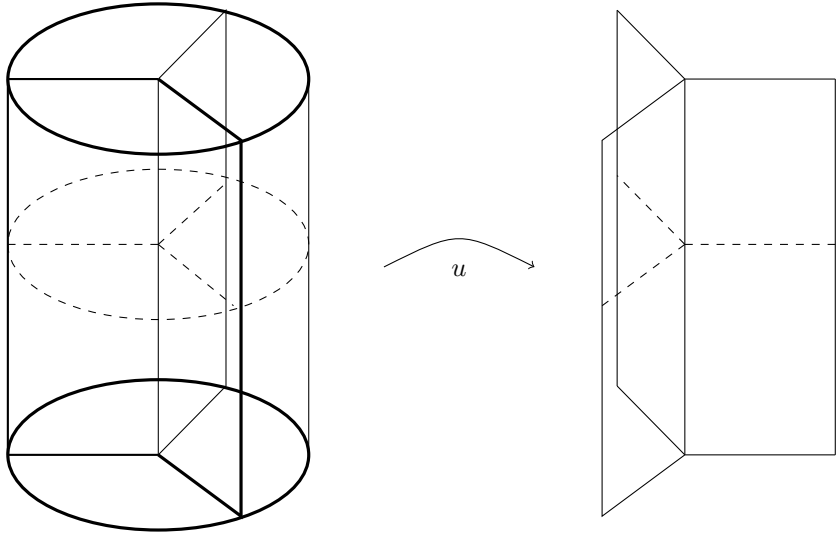

\end{example}
The final example shows how target splitting, even at a higher order point, can still make a classical definition of the singular stratum unnatural.

\begin{example}\label{ex:example3}
  Let $g:\R^4\to\R$ such that $g(r,\Theta) = r^2 \phi(\Theta)$ and $\Delta_{\mathbb S^3} \phi(\Theta) = 8 \phi(\Theta)$. Then, $g$ is a homogeneous harmonic polynomial of degree $2$, with a unique point of order $2$ at $0$. Define $f_K:B_1(0)\to K$ to be a map from the disk into a $5$-pod, analogous to the definition of $f_Y$. Now consider the map $h(r,\Theta,z)=(g(r,\Theta),f_K(z))$. Observe that $h$ splits at $0$ with optimal $\R^4$-factor, and $u_1=(g,0)$ while $u_2=(0,f_K)$. The singular set of $h$ is $\R^4\times\{0\}$, which suggests that singular points should be in the $4$-stratum.

   However, a tedious calculation demonstrates that at the origin, the tangent map $h_*$ is a non-zero scalar multiple of $u_1=(g,0)$. Thus it is homogeneous degree $2$ and comes from the regular factor.  Moreover, this tangent map is $2$-homogeneous but not $3$-homogeneous, suggesting that (classically) the origin should lie in the $2$-stratum. But we know it should lie in the $4$-stratum.
\end{example}

\subsection{Defining the Strata}
With these examples in mind, we first characterize singular points by their maximal target $\R^j$ factor. 
\begin{definition} Let $u:\Omega \to X$ and for $j \in \{0, \dots, N-1\}$, define
\[
F_j(u):=\{ x \in \mathcal S(u) : u \text{ splits at }x \text{ with optimal } \R^j\text{ factor}\}
\]in the sense of Definition \ref{def:splits}.
    
\end{definition}
Observe that a point $x \in F_j(u)$ has an optimal target $\R^j$ factor but no target $\R^{j+1}$ factor. In particular, 
for $x\in F_{j}(u)$, there exists splitting data $(\sigma, u_1, u_2, j, Y)$ so that $u|B_\sigma(x)=(u_1,u_2):B_\sigma(x)\to\R^j\times Y$ and $x\in F_0(u_2)$.

\begin{definition}\label{def:strata}
    Let $u:\Omega\to X$, where $X$ is an $F$-connected complex, $\Omega\subseteq\R^n$, and $u$ is harmonic. 
For each $k \in \{0, \dots, n-2\}$, let \[
\mathcal{S}^k_0(u):=\{x\in F_0(u):\text{no tangent map at }x\text{ is }(k+1)\text{-homogeneous}\}.
\]
For $j \in \{1, \dots, N-1\}$, recalling \eqref{eq:sigmaj}, let
\begin{align*}
\mathcal{S}^k_j(u):=\{x\in F_j(u):&\text{ for all splittings }(\sigma, u_1^\sigma, u_2^\sigma, j, Y_\sigma ), \text{ with }\sigma \in (0, \sigma_j(x)],\,x\in\mathcal{S}^k_0(u_2^\sigma)\}.
\end{align*}

Finally, define 
\[\mathcal{S}^k(u):=\bigcup_{j=0}^{N-1}\mathcal{S}^k_j(u).
\]
\end{definition}
 
We shall prove our theorems for ${\mathcal{S}}_0^k(u)$ and recover conclusions for the full singular set simply from the definitions and splitting phenomena.  The discussion at the beginning of this section implies that if $x\in\mathcal{S}_0^k(u)$, then $\Ord_u(x)>1$ and $u(x)$ is a $0$-cell of $X$.  In particular, when we restrict attention to a conical complex, because conical complexes have only one $0$-cell, the cone point $X_C$, we shall have that $\mathcal{S}^k_0(u)\subseteq u^{-1}(0_X)$.

\subsection{Almost Homogeneity and Quantitative Strata}

Since we are interested in quantitative results, we need a notion of almost $k$-homogeneity.
\begin{definition}\label{def:quant-hom}
We say that a map $u$ is {\bf $(\eta,r,k)$-homogeneous at $x$} if there exists a $k$-homogeneous map $h$ 
so that
\begin{equation}\label{eq:khom}
\sup_{B_r(x)} d(u,h^x)\leq \eta\left(\frac{I_\phi(x,r)}{r^{n-1}}\right)^{\frac{1}{2}}
\end{equation}where $h^x(y):=h(y-x)$ is defined to center the homogeneity of $h$ at $x$.

\end{definition}
The function of $r$ on the right hand side is present so that the inequality is preserved under {\em both} domain and target rescalings. (Note that the canonical homogeneous competitor is a tangent map of $u$, which is realized as a limit under domain and target rescalings.)

\comments{
\begin{remark}
    Note that in our definition of $(\eta,r,k)$-homogeneity, we restrict our comparisons to homogeneous harmonic maps rather than simply homogeneous maps. This is a natural choice as we know that any harmonic map of finite energy is, on small scales, $C^0$-close (in a pull-back sense) to a tangent map which is itself a homogeneous harmonic map.

    Moreover, at various points in the argument, we will need a sequence of homogeneous maps to converge and this cannot be accomplished in a suitable sense unless the maps are also harmonic. (Since $L^2(\Omega, X_C)$ is not a Hilbert space, even if we used the standard $L^2$ average in the definition \eqref{eq:khom}, we could not see a way to leverage the weak $L^2$ convergence of homogeneous maps to achieve the necessary contradictions.)
\end{remark}

The radical on the right hand side is present so that we can appropriately compare to tangent maps. That is, $u$ is $(\eta,r,k)$-homogeneous precisely when $u_{1/r}:B_1(0) \to (X,d_{1/r})$ is $(\eta,1,k)$-homogeneous at $0$, i.e., $\sup_{B_1(0)}d_{1/r}(u_{1/r},h)\leq\eta$ for some $k$-homogeneous $h$. (Recall that $u_{1/r}$ is determined by first translating $x$ to $0$.)

Indeed, since each tangent map $u_{*}$ is the (uniform) limit of a subsequence of rescaled maps (into rescaled complexes $(X,d_{\lambda})$), if there exists a $k$-homogeneous tangent map $u_{*}$ at $x$, then for each $\eta>0$ there is some $r_\eta>0$ so that $$\sup_{B_{r_\eta}(x)}d(u,u_{*})\leq\eta\left(\frac{I(x,r_\eta)}{r_\eta^{n-1}}\right)^{\frac{1}{2}};$$ hence $u$ is $(\eta,r_\eta,k)$-homogeneous at $x$.  
}

\begin{definition}\label{def:local-quant-strat}
For $\eta>0$, $r \in (0,1)$, let $${\mathcal S}^k_{0,\eta,r}(u):=\{x \in \mathcal{S}_0^k(u): u \text{ is not } (\eta, s, k+1)\text{-homogeneous at }x\text{ for any }s \in [r,1]\}.$$
For $\eta>0$, let $$\mathcal{S}^k_{0,\eta}(u):= \{x \in \mathcal{S}_0^k(u): u \text{ is not }(\eta,s,k+1)\text{-homogeneous at }x\text{ for any }s \in (0,1]\}.$$
\end{definition}

Standard arguments now imply the following.

\begin{proposition}\label{prop:Skequiv}
 $\mathcal{S}^k_0(u)=\bigcup_{\eta>0}\mathcal{S}^k_{0,\eta}(u)=\bigcup_{\eta>0}\bigcap_{r>0}\mathcal{S}^k_{0,\eta,r}(u).$   
\end{proposition}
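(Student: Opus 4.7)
The plan is to handle the two set equalities separately. The rightmost identity $\mathcal{S}^k_{0,\eta}(u) = \bigcap_{r>0} \mathcal{S}^k_{0,\eta,r}(u)$ is immediate from Definition \ref{def:local-quant-strat}: failing to be $(\eta,s,k+1)$-homogeneous at $x$ for any $s \in (0,1]$ is exactly the same as failing to be so for any $s \in [r,1]$ for every $r > 0$. Similarly, $\bigcup_{\eta>0} \mathcal{S}^k_{0,\eta}(u) \subseteq \mathcal{S}^k_0(u)$ is built into the definition of $\mathcal{S}^k_{0,\eta}(u)$.

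The substantive content is the reverse inclusion $\mathcal{S}^k_0(u) \subseteq \bigcup_{\eta>0} \mathcal{S}^k_{0,\eta}(u)$, which I would prove by contradiction. If $x \in \mathcal{S}^k_0(u)$ but $x \notin \mathcal{S}^k_{0,\eta_i}(u)$ for some $\eta_i \downarrow 0$, then for each $i$ there exist $s_i \in (0,1]$ and a $(k+1)$-homogeneous map $h_i$ with $\sup_{B_{s_i}(x)} d(u, h_i^x) \leq \eta_i (I_\phi(x, s_i)/s_i^{n-1})^{1/2}$. Passing to a subsequence, $s_i \to s_\infty \in [0,1]$, and I would split into two cases. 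If $s_\infty > 0$, the local Lipschitz bound on $u$ forces $I_\phi(x, s_i)/s_i^{n-1}$ to be uniformly bounded, so the right-hand side vanishes; the $h_i$ are then uniformly bounded on $B_{s_\infty/2}(x)$, and a subsequence converges uniformly to a $(k+1)$-homogeneous $h_\infty$ with $u \equiv h_\infty^x$ on $B_{s_\infty}(x)$. Every tangent map of $u$ at $x$ is then (a rescaling of) $h_\infty$ and is $(k+1)$-homogeneous, contradicting $x \in \mathcal{S}^k_0(u)$. If $s_\infty = 0$, I apply the rescalings \eqref{eq:rescalings} and extract $u_{s_i} \to u_*$ converging uniformly on $B_1$ to a tangent map of $u$ at $x$; using \eqref{eq:height-technical} and the uniform $\Ord_\phi$ bound along the scales $s_i$, the right-hand side of the $(\eta_i, s_i, k+1)$-inequality translates, under the rescaled target metric, to a quantity of order $\eta_i \to 0$. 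The rescaled $h_i$ therefore also converge uniformly to $u_*$; since $(k+1)$-homogeneity is preserved under uniform limits, $u_*$ itself is $(k+1)$-homogeneous, again contradicting $x \in \mathcal{S}^k_0(u)$.

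The main obstacle is the $s_\infty = 0$ case: one must convert the $(\eta_i, s_i, k+1)$-homogeneity inequality into the rescaled coordinates and verify both that the normalization on the right-hand side matches the target rescaling factor $(s_i^{1-n} I(x, s_i))^{1/2}$ (using \eqref{eq:height-technical} and monotonicity of $\Ord_\phi$ to make $I_\phi$ and $I$ uniformly comparable) and that the rescaled $(k+1)$-homogeneous maps $h_i$ subsequentially converge uniformly on $B_1$. This last step is exactly where the gradient and order control for homogeneous harmonic maps alluded to in the introduction (Lemmas \ref{lem:hom-grad-bound} and \ref{lem:hom-ord-bound}), together with an Arzel\`a-Ascoli argument, becomes essential.
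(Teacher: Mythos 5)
The paper gives no proof of this proposition at all---it is dismissed with ``Standard arguments now imply the following''---so there is nothing to compare against except the standard argument itself, and your proposal is a correct and essentially complete rendering of it: the second equality and the inclusion $\bigcup_\eta\mathcal{S}^k_{0,\eta}(u)\subseteq\mathcal{S}^k_0(u)$ are definitional, and the substantive inclusion is exactly the blow-up/contradiction argument you describe, split according to whether the good scales $s_i$ degenerate.

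One point to tighten in the $s_\infty>0$ case: uniform boundedness of the $h_i$ on $B_{s_\infty/2}(x)$ does not by itself yield a uniformly convergent subsequence (you need equicontinuity), and for the limit to be $(k+1)$-homogeneous you also need the degrees $\alpha_i$ to remain bounded. In fact you can sidestep the convergence issue entirely: since $\sup_{B_{s_i}(x)}d(u,h_i^x)\to 0$, the maps $h_i^x$ converge locally uniformly on $B_{s_\infty}(x)$ to $u$ itself, so the only thing to verify is that $u|B_{s_\infty}(x)$ inherits $(k+1)$-homogeneity about $x$ from the $h_i$; this requires precisely the degree bound of Lemma \ref{lem:hom-ord-bound} (and a subsequence with $\alpha_i\to\alpha$, $V_i\to V$), i.e., the same ingredients you correctly identify as essential in the $s_\infty=0$ case. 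With that adjustment the argument is complete.
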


\section{Local and Global Results}\label{sec:results}

\subsection{Local Results}
In this subsection, we will state our main results for maps into conical complexes (Theorems \ref{thm:k-minsk-local} and \ref{thm:k-rect-local}). The subsequent subsection contains the global results, Theorems \ref{thm:k-minsk} and \ref{thm:k-rect-quant}, which can be recovered from the ``local" results for conical $F$-connected complexes by standard covering arguments.

In the following, $B_r(U)$ denotes the $r$-tubular neighborhood about $U$.

\begin{theorem}\label{thm:k-minsk-local}
Suppose that $u$ is a harmonic map from $B_{64}(0)\subset\R^n$ into a conical $F$-connected complex $X_C$, satisfying \eqref{eq:Icond}.

Then there are constants $\eta_0(n),C=C(n,\eta,X_C,\Lambda)>0$ so that whenever $0<\eta<\eta_0(n)$, we have the Minkowski-type estimate
\[
|B_r(\mathcal{S}^k_{0,\eta}(u)\cap B_{1/8}(0))|\leq Cr^{n-k}.
\] 
\end{theorem}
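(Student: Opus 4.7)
The plan is to obtain the Minkowski bound from a finer covering result, Theorem \ref{thm:k-2nd-cover}: for every $r \in (0,1)$, the set $\mathcal{S}^k_{0,\eta,r}(u) \cap B_{1/8}(0)$ admits a cover by at most $Cr^{-k}$ balls of radius $r$, with $C$ depending only on $n,\eta,X_C,\Lambda$. Since $\mathcal{S}^k_{0,\eta}(u) \subseteq \mathcal{S}^k_{0,\eta,r}(u)$ for every such $r$ by Proposition \ref{prop:Skequiv}, summing volumes of the $2r$-thickenings of the balls in this covering immediately yields $|B_r(\mathcal{S}^k_{0,\eta}(u) \cap B_{1/8}(0))| \leq Cr^{n-k}$. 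The bulk of the argument is therefore devoted to constructing this covering.

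The covering is built by a Naber--Valtorta-type induction on dyadic scales, following the framework of \cite{nv17,dmsv}. At each dyadic scale $s \in [r,1]$, a point of $\mathcal{S}^k_{0,\eta,r}(u)$ fails to be $(\eta,s,k+1)$-homogeneous by definition, so the map is bounded away from every $(k+1)$-homogeneous competitor on $B_s$. The refinement step builds a discrete measure $\mu$ concentrated on the remaining bad points and invokes the rectifiable Reifenberg theorem: the Jones-type $L^2$ mean flatness $\beta_\mu^2(x,s)$ of $\mu$ at scale $s$ is controlled, via the mean-flatness estimate of Section \ref{sec:meanflat}, by an integral over $B_s(x)$ of the drop in the smoothed order $\Ord_\phi(y,2s) - \Ord_\phi(y,s/2)$ against $\mu$. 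Because $\Ord_\phi(\cdot,s)$ is monotone nondecreasing in $s$ and globally bounded between $1$ and $\Lambda$ by hypothesis \eqref{eq:Icond}, summing these drops over all dyadic scales produces a uniformly finite total, which Reifenberg converts into the desired $r^{-k}$ ball count.

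The mechanism coupling the mean flatness to the order drop is cone-splitting (Section \ref{sec:conesplittingtranslating}): if $u$ is $(\delta,s,0)$-homogeneous, in the sense of Definition \ref{def:quant-hom}, at $k+1$ points $x_0,\dots,x_k\in B_s(x)$ whose affine hull is $\rho$-effectively $k$-dimensional, and if their smoothed orders agree to within $\delta$, then $u$ is $(\eta,\alpha s,k+1)$-homogeneous at a nearby point for some $\alpha(n)>0$. Contrapositively, the fact that every point of $\mathcal{S}^k_{0,\eta,r}(u)$ resists $(\eta,s,k+1)$-homogeneity at every scale $s \in [r,1]$ forbids the measure $\mu$ from being supported on configurations that are simultaneously nearly $(k+1)$-dimensional and nearly order-constant; this is precisely the content which produces the mean-flatness bound above.

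I expect the principal technical hurdle to lie in executing the cone-splitting step under the $C^0$ notion of $(\eta,r,k)$-homogeneity of Definition \ref{def:quant-hom}. A contradiction-and-compactness proof here requires extracting a \emph{uniformly} convergent subsequence of homogeneous harmonic comparison maps, rather than merely an $L^2$-convergent one, so that the limiting map is itself homogeneous and admits the correct translation invariance. This is where the uniform Lipschitz regularity of harmonic maps, coupled with the gradient and order-function controls for homogeneous maps furnished by Lemmas \ref{lem:hom-grad-bound} and \ref{lem:hom-ord-bound}, must be deployed to preclude collapse or loss of compactness in the limit. Once cone-splitting, the mean-flatness estimate, and the rectifiable Reifenberg theorem are all in place, the covering construction of Theorem \ref{thm:k-2nd-cover} proceeds as in the $Q$-valued setting of \cite{dmsv}, and the Minkowski bound follows from the volume sum outlined in the first paragraph.
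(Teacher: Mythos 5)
Your proposal is correct and follows essentially the same route as the paper: iterate the covering theorem (Theorem \ref{thm:k-2nd-cover}), whose proof rests on the mean-flatness estimate of Lemma \ref{lem:mean-flat}, the cone-splitting results of Section \ref{sec:conesplitting}, and the rectifiable Reifenberg theorem of \cite{nv17}, with the uniform convergence of homogeneous comparison maps supplied exactly by Lemmas \ref{lem:hom-grad-bound} and \ref{lem:hom-ord-bound} as you anticipate. The one compression is that the paper's proof of Theorem \ref{thm:k-minsk-local} itself consists of applying Theorem \ref{thm:k-2nd-cover} at most $\lceil M/\delta\rceil$ times—using the order-drop alternative \eqref{eq:ord-drop} together with the uniform bound on $\Ord_\phi$ from Lemma \ref{lem:ben4.2} to terminate—before all balls reach radius $r$; you fold this finite iteration into the statement of the covering result rather than making it explicit.
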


\begin{theorem}\label{thm:k-rect-local}
Suppose that $u$ is a harmonic map from $B_{32}(0)\subset\R^n$ into a conical $F$-connected complex $X_C$ satisfying \eqref{eq:Icond}.

There exists an $\eta_0(n)>0$ so that if $0<\eta<\eta_0(n)$, then $\mathcal{S}^k_{0,\eta}(u)\cap B_{1}(0)$ is countably $k$-rectifiable.
\end{theorem}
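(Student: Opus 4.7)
\emph{Approach.} The plan is to run essentially the same iterative covering procedure that proves Theorem \ref{thm:k-minsk-local}, but instead of halting at some positive scale $r$ (which yields only a Minkowski estimate), to continue the refinement indefinitely and feed the limiting cover into the rectifiable Reifenberg theorem of Naber-Valtorta. The combinatorial skeleton of the argument is identical to that of Theorem \ref{thm:k-minsk-local}; only the termination and the final bookkeeping change. The two heavy ingredients---the refined covering Theorem \ref{thm:k-2nd-cover} and the mean-flatness estimate from Section \ref{sec:meanflat}---are already in hand.

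\emph{Key steps.} First, I would set up a dyadic iteration: at stage $j \geq 0$, the portion of $\mathcal{S}^k_{0,\eta}(u)\cap B_1(0)$ still being tracked is covered by balls $B_{r_j}(x_i)$ with $r_j = 2^{-j}$, each classified as either ``final,'' meaning it is trapped inside a $k$-dimensional Lipschitz graph produced by Theorem \ref{thm:k-2nd-cover}, or ``good,'' meaning $u$ is close enough to a $k$-homogeneous competitor on it to allow a further round of refinement. Second, I would estimate the $L^2$ mean flatness of the centers at every dyadic scale by invoking the inequality of Section \ref{sec:meanflat}, which controls the mean distance of these centers to their best-fit $k$-plane by the drop in $\Ord_\phi$ across the corresponding scales. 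Third, I would telescope those drops using monotonicity of $\Ord_\phi$ together with the global bound \eqref{eq:Icond}, producing a summable Reifenberg-type bound over all dyadic scales. The rectifiable Reifenberg theorem then implies that each packing piece of the cover is $k$-rectifiable, and taking the union across the countably many refinement steps yields countable $k$-rectifiability of $\mathcal{S}^k_{0,\eta}(u)\cap B_1(0)$.

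\emph{Main obstacle.} The subtlest piece is the cone-splitting argument that distinguishes good from final balls at each scale. Because Definition \ref{def:quant-hom} measures $(\eta, r, k+1)$-homogeneity in $C^0$ rather than in $L^2$, a contradiction-compactness argument must extract a \emph{uniformly} convergent subsequence from a family of almost-homogeneous harmonic maps, and then identify a splitting direction in the limit. For maps into a conical $F$-connected complex this is not automatic; the needed upgrade from weak or pointwise convergence to uniform convergence is exactly what Lemmas \ref{lem:hom-grad-bound} and \ref{lem:hom-ord-bound} are designed to provide, via quantitative gradient and order control on the homogeneous approximators. Tracking those bounds through each rescaling used in the iteration, and then transporting the resulting $(k+1)$-homogeneous limit back to the original scale to produce the quantitative decay that drives refinement, is the main technical hurdle; once it is handled, the rest of the argument is a faithful adaptation of the Naber-Valtorta framework.
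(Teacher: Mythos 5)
Your strategy is viable but it is not the route the paper takes, and the comparison is worth spelling out. You propose to iterate the covering construction down to all scales and then invoke the discrete rectifiable--Reifenberg theorem of Naber--Valtorta; this is essentially how \cite{nv17} proves rectifiability, and it can be made to work here. The paper instead takes the shortcut of \cite{dmsv}: Theorem \ref{thm:k-minsk-local} already supplies $\mathcal{H}^k(\mathcal{S}^k_{0,\eta}\cap B_1)<\infty$ together with the upper Ahlfors bound $\mu(B_s(\xi))\leq C's^k$ for $\mu=\mathcal{H}^k\llcorner\mathcal{S}^k_{0,\eta}$, so one can verify the Dini condition $\int_0^1 D_\mu^k(x,s)\,\frac{ds}{s}<\infty$ $\mu$-a.e.\ (via Lemma \ref{lem:mean-flat}, Fubini, and the dyadic telescoping of the frequency pinching) and then apply the Azzam--Tolsa characterization (Theorem \ref{thm:at}) directly. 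The advantage of the paper's route is that Azzam--Tolsa only requires \emph{finiteness} of the Dini integral, whereas the discrete Reifenberg theorem (Theorem \ref{thm:nv-cover}) requires the integrated flatness to be \emph{smaller than} $\delta_R^2 r^k$ on every ball; your route therefore needs an additional decomposition into pieces on which the total order drop across the relevant scales is uniformly tiny, not merely bounded, and your phrase ``summable Reifenberg-type bound'' from monotonicity and \eqref{eq:Icond} glosses over this.

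Two smaller corrections. First, Theorem \ref{thm:k-2nd-cover} does not produce $k$-dimensional Lipschitz graphs; it produces a packing-controlled decomposition in which each ball either has reached the bottom scale or sees a definite drop in $\Ord_\phi$. The Lipschitz/rectifiable structure comes only from the Reifenberg-type theorem applied afterwards, so your description of the ``final'' balls conflates the two. Second, in either approach one must confront the fact that Lemma \ref{lem:mean-flat} is only applicable when $W_{\rho r}^{2r}(x)<\delta$; the paper handles this by covering $\mathcal{S}^k_{0,\eta}$ with the countably many sets $S_i=\{x:\Ord_\phi(x,32/i)-\Ord_\phi(x,0)<\delta\}$ and proving each is rectifiable. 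You would need the analogous decomposition (or to build the pinching control into your tree of coverings, as in Proposition \ref{prop:initial-cover}); as written, your proposal does not address where the hypothesis of the mean-flatness lemma comes from at a generic point and scale. Your identification of the $C^0$-compactness issue and its resolution via Lemmas \ref{lem:hom-grad-bound} and \ref{lem:hom-ord-bound} is accurate.
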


\subsection{Global Results}Using the results of the previous section, we establish Minkowski bounds for the set $\mathcal S^k_{0,\eta}(u)$ and $k$-rectifiability for $\mathcal S^k_0(u)$.

\begin{theorem}\label{thm:k-minsk}
Let $u:\Omega\to X$ be a harmonic map, where $\Omega\subset\R^n$ is open and $X$ is an $F$-connected complex.  There exists an $\eta_0(n)>0$ so that for any compact $K\subset\Omega$ and any $0<\eta<\eta_0(n)$, we have $\mathcal{H}^{k}(\mathcal{S}^k_{0,\eta}(u)\cap K)<\infty$ and indeed for $r<1$,
\begin{equation}
|B_r(\mathcal{S}^k_{0,\eta}(u)\cap K)|\leq C(K,u,\eta)r^{n-k}.
\end{equation}
\end{theorem}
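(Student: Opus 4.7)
My plan is to deduce Theorem \ref{thm:k-minsk} from the local Theorem \ref{thm:k-minsk-local} by a finite covering argument, exploiting the fact that $F$-connected complexes are locally conical.

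Fix $\eta_0(n)$ as in Theorem \ref{thm:k-minsk-local} and let $0<\eta<\eta_0(n)$. For each $p\in K$, continuity of $u$ (\cite[Theorem 2.4.6]{korevaar-schoen1}) together with local conicality yields a $\rho_p\in(0,1)$ such that $\overline{B_{\rho_p}(p)}\subset\Omega$ and $u(B_{\rho_p}(p))$ sits inside a neighborhood of $u(p)$ isometric to a neighborhood of the cone point in $X_{u(p)}$. Via this isometry I regard $u|_{B_{\rho_p}(p)}$ as a map into the conical complex $X_{u(p)}$ and set $v_p:B_{64}(0)\to X_{u(p)}$ by $v_p(y):=u(p+(\rho_p/64)y)$, which is again harmonic. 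The smoothed order $\Ord_{\phi,u}(p,\rho)$ is continuous in $p$ at any fixed $\rho<\mathrm{dist}(K,\partial\Omega)$, so by compactness of $K$ and monotonicity of $\Ord_\phi$ in the radial variable I may shrink the $\rho_p$ uniformly to produce $\Lambda=\Lambda(K,u)$ with $\Ord_{\phi,v_p}(0,64)\le\Lambda$ for every $p\in K$; thus \eqref{eq:Icond} holds uniformly.

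Applying Theorem \ref{thm:k-minsk-local} to each $v_p$ gives
\[
|B_r(\mathcal{S}^k_{0,\eta}(v_p)\cap B_{1/8}(0))|\le C(n,\eta,X_{u(p)},\Lambda)\,r^{n-k}.
\]
The key observation is that Definition \ref{def:quant-hom} is scale-invariant in both domain and target, so if $x\in\mathcal{S}^k_{0,\eta}(u)\cap B_{\rho_p/512}(p)$, its rescaled image $\tilde x\in B_{1/8}(0)$ lies in $\mathcal{S}^k_{0,\eta}(v_p)$: failure of $u$-homogeneity on all scales in $(0,1]$ includes the subrange $(0,\rho_p/64]$, which transports under the rescaling to the full $v_p$-scale range $(0,1]$. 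Pulling back the Minkowski bound through the rescaling yields $|B_r(\mathcal{S}^k_{0,\eta}(u)\cap B_{\rho_p/512}(p))|\le C_p\,r^{n-k}$ for all $0<r<\rho_p/64$ and a rescaled constant $C_p=C_p(n,\eta,u,p,K)$. Extracting a finite subcover $\{B_{\rho_{p_i}/512}(p_i)\}_{i=1}^m$ of $K$ and summing these bounds---absorbing the bounded remaining range $r\ge\min_i(\rho_{p_i}/64)$ into the constant via the crude estimate $|B_1(K)|$---gives the desired inequality with $C(K,u,\eta):=\sum_i C_{p_i}$. Finally, $\mathcal{H}^k(\mathcal{S}^k_{0,\eta}(u)\cap K)<\infty$ follows by sending $r\to 0$ and invoking the standard comparison between Minkowski content and Hausdorff measure.

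The chief obstacle I foresee is securing the uniform order bound $\Lambda(K,u)$, which rests on the continuity and monotonicity of $\Ord_\phi$ combined with compactness of $K$; once that is in hand, the remaining work is bookkeeping of rescalings and verifying that the quantitative strata transform correctly under them, both of which follow cleanly from the scale-invariant design of Definition \ref{def:quant-hom}.
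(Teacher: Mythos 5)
Your argument is correct and is exactly the ``standard covering argument'' via local conicality and rescaling that the paper invokes for Theorem \ref{thm:k-minsk} but explicitly declines to write out. The only point worth tightening is the uniform bound $\Lambda(K,u)$ on the order: since the cone point of the local conical chart varies with $p$, it is cleaner to obtain this from Lemma \ref{lem:ben4.2} together with compactness of $K$ than from continuity of $\Ord_\phi$ in the base point.
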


\begin{theorem}\label{thm:k-rect-quant}
Let $u:\Omega\to X$ be a harmonic map, where $\Omega\subset\R^n$ is open and $X$ is an $F$-connected complex. There exists an $\eta_0(n)>0$ so that for every $0<\eta<\eta_0(n)$ the set $\mathcal{S}^k_{0,\eta}(u)$ is countably $k$-rectifiable.  In particular, by Proposition \ref{prop:Skequiv}, $\mathcal{S}^k_0(u)$ is countably $k$-rectifiable as well.
\end{theorem}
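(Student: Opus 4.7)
The plan is to reduce the global statement to the local result, Theorem \ref{thm:k-rect-local}, via a covering argument that exploits the local conicity of $F$-connected complexes. Take $\eta_0(n)$ to be the constant supplied by Theorem \ref{thm:k-rect-local}, and fix $0<\eta<\eta_0(n)$. Every $x\in\mathcal{S}^k_{0,\eta}(u)\subseteq\mathcal{S}^k_0(u)$ satisfies $J(x)=0$, which forces $u(x)$ to be a $0$-cell of $X$: otherwise $u(x)$ would lie in the interior of some $j$-cell with $j\geq 1$, producing a local isometric splitting $\R^j\times Y$ of $X$ around $u(x)$, and hence a splitting of $u$ with $\R^j$ factor, contradicting $J(x)=0$.

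For each $p\in\mathcal{S}^k_{0,\eta}(u)$ I would choose $\rho=\rho(p)\in(0,1]$ small enough that $B_{32\rho}(p)\subset\Omega$ and $u(B_{32\rho}(p))$ lies in a neighborhood of $u(p)$ isometric to a neighborhood of the cone point of the conical $F$-connected complex $X_{u(p)}$; this is possible by continuity of $u$ and local conicity. Define $\tilde u(y):=u(p+\rho y)$ on $B_{32}(0)$, a harmonic map into $X_{u(p)}$ with $\tilde u(0)=0_X$. A direct change-of-variables calculation gives $\Ord_{\phi,\tilde u}(0,s)=\Ord_{\phi,u}(p,\rho s)$, so the hypothesis \eqref{eq:Icond} holds for $\tilde u$ with $\Lambda:=\Ord_{\phi,u}(p,\rho)<\infty$. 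The key observation is that the condition of $(\eta,s,k+1)$-homogeneity is invariant under both domain and target rescaling, since the normalization $(I_\phi(x,s)/s^{n-1})^{1/2}$ in Definition \ref{def:quant-hom} transforms equivariantly; therefore the affine change $y=(x-p)/\rho$ carries $\mathcal{S}^k_{0,\eta}(u)\cap B_\rho(p)$ into a subset of $\mathcal{S}^k_{0,\eta}(\tilde u)\cap B_1(0)$. Indeed, using $\rho\leq 1$, failure of $(\eta,s,k+1)$-homogeneity for $u$ at $x$ on all $s\in(0,1]$ implies failure for $\tilde u$ at $(x-p)/\rho$ on all $s'\in(0,1/\rho]\supseteq(0,1]$.

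Applying Theorem \ref{thm:k-rect-local} to $\tilde u$ then gives countable $k$-rectifiability of $\mathcal{S}^k_{0,\eta}(\tilde u)\cap B_1(0)$, and pulling back under the affine map preserves rectifiability, so $\mathcal{S}^k_{0,\eta}(u)\cap B_{\rho(p)}(p)$ is countably $k$-rectifiable. Since $\Omega$ is second-countable, the open cover $\{B_{\rho(p)}(p)\}_{p\in\mathcal{S}^k_{0,\eta}(u)}$ admits a countable subcover of $\mathcal{S}^k_{0,\eta}(u)$, and countable unions of countably $k$-rectifiable sets are countably $k$-rectifiable, establishing the first claim. The second assertion is immediate from Proposition \ref{prop:Skequiv}: choose an integer $M>1/\eta_0(n)$ so that $\mathcal{S}^k_0(u)=\bigcup_{m\geq M}\mathcal{S}^k_{0,1/m}(u)$, again a countable union of countably $k$-rectifiable sets. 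The main obstacle is notational rather than conceptual: one must carefully verify that the strata and the $(\eta,r,k)$-homogeneity condition transform cleanly under the rescaling $\tilde u(y)=u(p+\rho y)$, by tracking how $E_\phi$, $I_\phi$, and the normalization $(I_\phi/s^{n-1})^{1/2}$ behave under simultaneous domain and target scaling.
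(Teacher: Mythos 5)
Your argument is correct and is precisely the ``standard covering argument'' via local conicality that the paper invokes but does not write out (see the remark following the statement of Theorem \ref{thm:k-rect-quant}): localize so the target is conical, rescale, note that the normalization $(I_\phi(x,r)/r^{n-1})^{1/2}$ in Definition \ref{def:quant-hom} is exactly invariant under the simultaneous domain/target rescaling so that quantitative strata map into quantitative strata, apply Theorem \ref{thm:k-rect-local}, and take a countable subcover. The only nit is that \eqref{eq:Icond} for $\tilde u$ on $B_{32}(0)$ should be imposed at scale $s=32$, i.e.\ $\Lambda:=\Ord_{\phi,u}(p,32\rho)$ rather than $\Ord_{\phi,u}(p,\rho)$; since $\Lambda$ is allowed to depend on $p$ this is harmless.
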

To recover Theorems \ref{thm:k-rect-quant} and \ref{thm:k-minsk} from the conical results of Theorem \ref{thm:k-rect-local} and \ref{thm:k-minsk-local}, one can use the local conicality of $F$-connected complexes and a standard covering argument. We do not carry out those arguments here. 

\subsection{Proof of Rectifiability of the Singular Strata}\label{sec:proof of main}

Finally, we prove Theorem \ref{thm:k-rect} from Theorem \ref{thm:k-rect-quant}.

\begin{proof}[Proof of Theorem \ref{thm:k-rect}]
Writing $\mathcal{S}^k(u)=\bigcup_{j=0}^{N-1}\mathcal{S}^k_j(u)$, it clearly suffices to show that each $\mathcal{S}^k_j(u)$ is $k$-rectifiable.  First, we observe that $\mathcal{S}^k_0(u)$ is $k$-rectifiable by Theorem \ref{thm:k-rect-quant}.

Now fix $j \in \{1, \dots, n-2\}$ and let $x \in \mathcal S^k_j(u)$. Recalling \eqref{eq:sigmaj}, consider splitting data $(\sigma_j(x),u_1^x,u_2^x,j,Y_x)$ for $u$ at $x$.  It follows directly from the definitions that $\mathcal{S}^k_{j}(u)\cap B_{\sigma_j(x)}(x)\subseteq\mathcal{S}^k_0(u_2^x)$, because for any $y\in\mathcal{S}^k_j(u)\cap B_{\sigma_j(x)}(x)$ the splitting data $(\sigma_j(x)-d(x,y),u_1^x,u_2^x,j,Y_x)$ has optimal dimension and hence $\sigma_j(x)-d(x,y)\leq\sigma_j(y)$.  Covering $\mathcal{S}^k_j(u)$ by countably many balls $B_{\sigma_j(x)}(x)$, since each set $\mathcal{S}^k_0(u_2^x)$ is $k$-rectifiable by Theorem \ref{thm:k-rect-quant}, so is $\mathcal{S}^k_j(u)$. It follows that $\mathcal{S}^k(u)$ is $k$-rectifiable.
\end{proof}

\section{Cone Splitting}\label{sec:conesplitting}

The proofs of Theorems \ref{thm:k-minsk-local} and \ref{thm:k-rect-local} can be deduced using standard methods, after establishing a number of key structural lemmas. We collect these structural lemmas in this section. In a number of these results, we need a notion of ``spanning a $k$-dimensional affine subspace" which is stable under limits. The following definition is commonly used e.g. in \cite{nv17,dmsv,dees}. 

\begin{definition}
    Let $\{x_0, \dots x_k\} \subset B_1(0) \subset \mathbb R^n$. We say that this set {\bf $\rho$-spans} or {\bf $\rho$-effectively spans} a $k$-dimensional affine subspace if for all $i=1, \dots, k,$
    \[
x_i \notin B_{\rho}(x_0+ \text{span}(x_1-x_0, \dots x_{i-1}-x_0)).
    \]
    
    Given a set $F \subset B_1(0)$ we say that $F$ {\bf $\rho$-spans} or {\bf $\rho$-effectively spans} a $k$-dimensional affine subspace if there exist a set $\{x_0, \dots, x_k\} \subset F$ that $\rho$-effectively spans a $k$-dimensional affine subspace.
\end{definition}

In this work, the monotone quantity is the order or frequency, so it is useful to define the following quantity (cf. \cite{dmsv,dees}).

\begin{definition}
    For $0<s<r$, the {\bf order pinching} or {\bf frequency pinching} is defined by
\begin{equation}\label{eq:freqpinch}
    W_s^r(x):=\text{Ord}_\phi(x,r)-\text{Ord}_\phi(x,s).
\end{equation}
When multiple maps are under consideration, we will sometimes write $W_s^r(x,u)$ to disambiguate.
\end{definition}

\subsection{Known Results}

In the proofs of this section, we will sometimes use the following four results, the proofs of which appear in \cite{dees}.

\begin{lemma}[Lemma 4.2, \cite{dees}]\label{lem:ben4.2}
    There is a constant $C$ so that for all $y\in B_{r/4}(x)$,
    \[
    \Ord_\phi(y,r)\leq C(\Ord_\phi(x,16r)+1).
    \]
\end{lemma}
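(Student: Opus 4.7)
The plan is to bound the numerator $rE_\phi(y,r)$ and the denominator $I_\phi(y,r)$ of the smoothed frequency ratio separately, comparing each to analogous quantities at $x$ at the dilated scale $16r$, and then combining.

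The energy numerator is the easy direction. For $y\in B_{r/4}(x)$, any $z\in B_r(y)$ satisfies $|z-x|\leq |z-y|+|y-x| < 5r/4 < 8r$. Since $\phi\equiv 1$ on $[0,1/2]$, the weight $\phi(|z-x|/(16r))$ equals $1$ throughout $B_{8r}(x)$, so
\[
E_\phi(y,r) \leq \int_{B_r(y)} |\nabla u|^2\, dz \leq \int_{B_{8r}(x)}|\nabla u|^2\, dz \leq E_\phi(x,16r).
\]
This gives the upper bound $rE_\phi(y,r)\leq \tfrac{1}{16}\cdot 16r\,E_\phi(x,16r)$ on the numerator.

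The main work lies in a lower bound on $I_\phi(y,r)$. Rewriting
\[
I_\phi(y,r)=2\int_{A(y,r/2,r)} d^2(u(z),0_X)|z-y|^{-1}\,dz,
\]
the idea is to shift the integration by $y-x$ and compare with the corresponding integral centered at $x$. Since $|y-x|<r/4$, the shifted and un-shifted annuli overlap substantially, and by the interior Lipschitz estimate of \cite[Theorem 2.4.6]{korevaar-schoen1}, $u$ is Lipschitz on $B_{8r}(x)$ with constant $L$ satisfying $L^2\leq C_n E_\phi(x,16r)/r^n$ in scale-invariant form. Carefully bounding the difference of integrands using $|d^2(u(z),0_X)-d^2(u(z+y-x),0_X)|\lesssim L|y-x|\bigl(d(u(z),0_X)+d(u(z+y-x),0_X)\bigr)$ produces an inequality of the shape
\[
I_\phi(y,r)\geq c_n I_\phi(x,r)-C_n\, L^2 r^{n+1}\geq c_n I_\phi(x,r)-C_n\, r\,E_\phi(x,16r).
\]
Combined with the scale-invariant height monotonicity \eqref{eq:height-technical}, which gives comparability of $I_\phi(x,r)$ and $I_\phi(x,16r)$ up to $n$-dependent constants outside the degenerate regime, this produces the desired lower bound on $I_\phi(y,r)$.

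Dividing the energy upper bound by the height lower bound yields
\[
\Ord_\phi(y,r) \leq \frac{rE_\phi(x,16r)}{c_n I_\phi(x,r)-C_n\, r\,E_\phi(x,16r)},
\]
and a dichotomy finishes the argument. Either the leading term in the denominator dominates, producing $\Ord_\phi(y,r)\leq C_n\Ord_\phi(x,16r)$; or the Lipschitz correction dominates, in which case the bound reduces to a universal $\Ord_\phi(y,r)\leq C_n$, accounting for the ``$+1$'' in the conclusion. The main obstacle will be the height comparison step, specifically ensuring the Lipschitz correction is expressed in a scale-invariant manner (so that $L^2 r^2$ is controlled by an averaged energy at scale $16r$) and that no exponential dependence on $\Ord_\phi(x,16r)$ is picked up when invoking monotonicity; this is precisely where the additive $+1$ term on the right-hand side absorbs the degenerate case in which $u$ is nearly constant equal to $0_X$ on $A(y,r/2,r)$.
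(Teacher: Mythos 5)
The paper does not reprove this lemma (it is quoted from \cite[Lemma 4.2]{dees}), so I can only assess your argument on its own terms, and it does not work. The fatal issue is that every step of your chain loses a factor that is \emph{exponential} in the order, whereas the conclusion is linear. Test the argument on $u=\mathrm{Re}(z^k):\R^2\to\R$, $x=0$, $y=(r/4,0)$, for which $\Ord_\phi(x,16r)=k$ exactly. Here $E_\phi(x,16r)\approx k\,(16r)^{2k}r^{-2}$, while $I_\phi(y,r)\approx k^{-3/2}(5r/4)^{2k}r$ (the height on the annulus $A(y,r/2,r)$ concentrates near the point of $\partial B_r(y)$ farthest from the origin, where $|u|\approx(5r/4)^k$). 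Hence $rE_\phi(x,16r)/I_\phi(y,r)\gtrsim(64/5)^{2k}$, so no argument that begins by replacing $E_\phi(y,r)$ with $E_\phi(x,16r)$ can produce the bound $C(k+1)$: the numerator must be kept weighted at scale $r$ about $y$ (the vanishing of $\phi(|\cdot-y|/r)$ at $\partial B_r(y)$ is essential; even discarding the weight and using $\int_{B_r(y)}|\nabla u|^2\le\int_{B_{5r/4}(x)}|\nabla u|^2$ already costs a factor of order $k^{3/2}$ in this example). The same example shows your height comparison is vacuous: the correction term $C_n\,rE_\phi(x,16r)$ exceeds $c_nI_\phi(x,r)$ by a factor of roughly $16^{n-1}\Ord_\phi(x,16r)$, so except when $\Ord_\phi(x,16r)\lesssim 16^{1-n}$ your claimed lower bound on $I_\phi(y,r)$ has a negative right-hand side. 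Relatedly, \eqref{eq:height-technical} does \emph{not} give comparability of $I_\phi(x,r)$ and $I_\phi(x,16r)$ up to dimensional constants: it gives $I_\phi(x,r)=16^{1-n}\exp\bigl(-2\int_r^{16r}\Ord_\phi(x,t)\tfrac{dt}{t}\bigr)I_\phi(x,16r)$, a ratio exponentially small in the order.

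The concluding dichotomy therefore does not close the argument. The regime in which "the Lipschitz correction dominates" is the generic one (it occurs whenever $\Ord_\phi(x,16r)$ is bounded below by a small dimensional constant), and in that regime your displayed inequality carries no information, while the asserted fallback $\Ord_\phi(y,r)\le C_n$ is simply false: in the example above $\Ord_\phi(y,r)\approx k$ is unbounded. A correct proof has to exploit the structure at the center $y$ itself rather than transporting everything to $x$ at scale $16r$ --- for instance, testing the subharmonicity inequality $\Delta d^2(u,0_X)\ge 2|\nabla u|^2$ against the weight $\phi(|\cdot-y|/r)$ yields $rE_\phi(y,r)\le I(y,r)$, after which the real content is a two-center height comparison at commensurate scales with only a linear loss in the order; this is the delicate part carried out in \cite[Lemma 4.2]{dees}, and it is precisely the part your proposal replaces with estimates that degrade exponentially.
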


\begin{lemma}[Lemma 5.1, \cite{dees}]\label{lem:ben5.1}
    If $u$ is a harmonic map $u:B_r(0)\to X_C$, where $X_C$ is a conical $F$-connected complex, and $\Ord_\phi(0,s)=\Ord_\phi(0,r)$ for $s<r$, then $u$ is homogeneous about $0$ on $B_{r}(0)$.
\end{lemma}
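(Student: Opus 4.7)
The plan is to extract homogeneity from the equality case of the Cauchy--Schwarz inequality that drives the monotonicity of $\Ord_\phi$. Since $\Ord_\phi(0,\cdot)$ is monotone nondecreasing, the hypothesis forces $\Ord_\phi(0,t)\equiv\alpha$ on $[s,r]$ for some $\alpha\geq 1$.

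First, combine \eqref{eq:energy-deriv} with the logarithmic derivative (in $r$) of the identity \eqref{eq:height-technical}, applied to the definition $\Ord_\phi = tE_\phi/I_\phi$. The three pieces collapse into
\[
\partial_t\log \Ord_\phi(0,t) \;=\; \frac{2}{t^2 E_\phi(0,t)}\int_{\R^n} |\partial_\nu u|^2\,|y|\,\tilde\phi_t(|y|)\,dy \;-\; \frac{2\Ord_\phi(0,t)}{t},
\]
where $\tilde\phi_t(|y|):=-\phi'(|y|/t)\geq 0$. A Pokhozhaev--Rellich-type identity---equivalent to the first variation of $u$ against the radial vector field $X(y) = y\,\tilde\phi_t(|y|)$, and valid for harmonic maps into CAT(0) targets via the pullback inner product of Korevaar--Schoen---rewrites $tE_\phi(0,t)$ as a ``radial flux''
\[
tE_\phi(0,t) \;=\; \int_{\R^n} \pi_u(y)\,d(u(y),0_X)\,\tilde\phi_t(|y|)\,dy,
\]
where $\pi_u(y)$ denotes the component of the radial derivative $\partial_\nu u(y)$ along the geodesic ray from $0_X$ to $u(y)$ in $X_C$. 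Cauchy--Schwarz then yields $t^2 E_\phi^2\leq I_\phi\cdot\int |\partial_\nu u|^2|y|\tilde\phi_t$, which is exactly the monotonicity statement.

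Second, constancy of $\Ord_\phi$ on $[s,r]$ forces equality in this Cauchy--Schwarz at every scale $t\in[s,r]$. On each annulus $\{t/2\leq |y|\leq t\}$, equality produces a scalar $\lambda(t)$ with $d(u(y),0_X)=\lambda(t)|y|\,\pi_u(y)$ a.e., together with $\partial_\nu u(y)$ pointing along the ray from $0_X$ to $u(y)$ (the conical structure of $X_C$ ensures uniqueness of that ray). Consistency on overlapping annuli forces $\lambda(t)\equiv\lambda_0$, and substituting back into $\Ord_\phi\equiv\alpha$ pins $\lambda_0=1/\alpha$. Thus on the annulus $\{s/2\leq |y|\leq r\}$ and along every ray $\rho\omega$,
\[
\partial_\rho u(\rho\omega) \;=\; \frac{\alpha}{\rho}\,u(\rho\omega),
\]
read in the cone-scaling sense on $X_C$. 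Integration along each ray yields $u(\rho\omega)=\rho^\alpha\cdot u(\omega)$ and hence $\alpha$-homogeneity of $u$ on this annulus.

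Finally, extend homogeneity to all of $B_r(0)$: let $\tilde u$ be the $\alpha$-homogeneous extension of $u|_{\partial B_r}$ defined via cone scaling about $0_X$ in the target. Since both domain dilation and target scaling about $0_X$ preserve harmonicity on a cone, $\tilde u$ is harmonic. As $\tilde u$ and $u$ share boundary values on $\partial B_r$, uniqueness of harmonic maps into CAT(0) targets with prescribed Dirichlet data forces $\tilde u = u$ on $B_r(0)$. The main obstacle is the second step: translating the Cauchy--Schwarz equality in the metric-target setting into a pointwise radial ODE. This requires the Korevaar--Schoen pullback-metric formalism to make sense of $\partial_\nu u$ as a tangent-like object, and leverages that $X_C$ is conical so that geodesic rays from $0_X$---and scaling along them---are well-defined and unique.
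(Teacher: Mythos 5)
Your proof follows essentially the same route as the cited argument of \cite{dees} (itself modeled on Gromov--Schoen): the target-variation identity $tE_\phi=\int d(u,0_X)\,\partial_\nu d(u,0_X)\,\tilde\phi_t$ (which is exactly what underlies \eqref{eq:height-technical}), Cauchy--Schwarz plus $|\partial_\nu d(u,0_X)|\le|\partial_\nu u|$ to get monotonicity, and the equality case to force radial geodesic motion and the ODE $\partial_\rho d(u,0_X)=\tfrac{\alpha}{\rho}d(u,0_X)$ on the annulus $\{s/2\le|y|\le r\}$, followed by homogeneous extension and uniqueness of the Dirichlet problem to fill in $B_{s/2}(0)$. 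One caveat on the last step: the justification ``domain dilation and target scaling preserve harmonicity, hence $\tilde u$ is harmonic'' is not valid for the homogeneous extension of \emph{arbitrary} boundary data; what makes it work is that $\tilde u$ agrees with rescaled copies of $u$ on dilates of the annulus where $u$ is already known to be harmonic and homogeneous, so $\tilde u$ is harmonic on $B_r\setminus\{0\}$, and one then needs the standard removable-singularity argument (a point has zero $W^{1,2}$-capacity and $\tilde u$ has finite energy since $\alpha\ge1$) before invoking uniqueness. With that repair, which uses only ingredients already present in your write-up, the argument is complete.
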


\begin{lemma}[Lemma 6.3, \cite{dees}]\label{lem:ben6.3}
    Suppose that $u_k:B_{4}(0)\to X_C$ are harmonic maps into a conical $F$-connected complex $X_C$ with $0_X\in u_k(B_2(0))$ and so that \begin{enumerate} \item $\Ord_{\phi,u_k}(0,4)$ and $I_{\phi,u_k}(0,4)$ are uniformly bounded (in $k$), and \item $I_{\phi,u_k}(0,4)$ is uniformly bounded away from $0$.
\end{enumerate}

Then there is a subsequence $u_\ell$ of the $u_k$ converging uniformly in $B_{2}(0)$ and strongly in $W^{1,2}(B_{2}(0))$ to a nonconstant harmonic map $u:B_{2}(0)\to X$.
\end{lemma}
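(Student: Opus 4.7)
The plan is to extract a uniformly convergent subsequence via Arzelà--Ascoli and then certify that the uniform limit is harmonic, nonconstant, and that convergence is strong in $W^{1,2}$.

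First, I would convert the two given bounds into \emph{a priori} Lipschitz control. Since $\Ord_\phi(0,4) = 4 E_\phi(0,4)/I_\phi(0,4)$, the upper bound on $\Ord_{\phi,u_k}(0,4)$ together with the upper bound on $I_{\phi,u_k}(0,4)$ yields a uniform bound on $E_\phi(0,4)$, and hence on the Dirichlet energy of $u_k$ on any ball slightly inside $B_4(0)$. The Korevaar--Schoen interior Lipschitz estimate from \cite{korevaar-schoen1} then produces an $L$ (depending only on $n$, the uniform energy bound, and the distance to the boundary) for which each $u_k$ is $L$-Lipschitz on $B_3(0)$. Combined with the hypothesis $0_X \in u_k(B_2(0))$, this confines every $u_k(B_3(0))$ to a common bounded ball $\overline{B_{5L}(0_X)} \subset X_C$. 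Since closed bounded balls in $X_C$ are compact in the setting of the paper, Arzelà--Ascoli produces a subsequence $u_\ell$ converging uniformly on $\overline{B_{5/2}(0)}$ to a Lipschitz map $u\colon \overline{B_{5/2}(0)} \to X_C$.

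Next, I would verify harmonicity of $u$ and upgrade to strong $W^{1,2}$ convergence on $B_2(0)$ simultaneously, via the standard CAT(0) competitor argument. For any admissible $v$ with $v = u$ on $\partial B_2(0)$, construct competitors $v_\ell$ for $u_\ell$ by putting $v$ on an interior ball $B_{2-\epsilon}(0)$, interpolating radially on the annulus $B_2(0)\setminus B_{2-\epsilon}(0)$ along the unique CAT(0) geodesics joining $v(y)$ to $u_\ell(y)$, and leaving $u_\ell$ unchanged outside. The CAT(0) energy estimate controls the interpolation energy on the annulus by $E_v(B_2\setminus B_{2-\epsilon}) + O(\epsilon^{-1}\sup_{\overline{B_2}} d(u,u_\ell)^2)$, which tends to zero after sending $\ell\to\infty$ and then $\epsilon\to 0$. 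Energy minimality of $u_\ell$ together with the lower semicontinuity of Korevaar--Schoen energy then yields both $E_u(B_2)\leq E_v(B_2)$ (harmonicity of $u$) and $E_{u_\ell}(B_2)\to E_u(B_2)$ (energy convergence); the latter combined with the CAT(0) parallelogram identity promotes weak to strong $W^{1,2}$ convergence.

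Finally, nonconstancy of $u$ is immediate: $I_{\phi}(0,4)$ depends continuously on the map under uniform convergence (its integrand is $d^2(\cdot,0_X)$ times a fixed weight), so the uniform lower bound on $I_{\phi,u_k}(0,4)$ passes to $u$ and forbids $u \equiv 0_X$. The main obstacle I expect is the interpolation step: producing admissible competitors in the (non-manifold) conical $F$-connected target and estimating their annular energies via the CAT(0) distance estimates requires careful bookkeeping of both the uniform error $\sup d(u,u_\ell)$ and the radial derivatives on the thin annulus, in order to upgrade lower semicontinuity to genuine energy convergence.
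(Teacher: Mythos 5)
First, a point of context: this paper does not prove the lemma at all --- it is imported verbatim from \cite{dees} (Lemma 6.3 there) and listed under ``Known Results,'' so the comparison is with the standard argument of that source. Your route is that standard one: convert $\Ord_{\phi}(0,4)=4E_\phi(0,4)/I_\phi(0,4)$ plus the upper bound on $I_\phi(0,4)$ into a uniform energy bound, hence uniform interior Lipschitz bounds via \cite{korevaar-schoen1}; use $0_X\in u_k(B_2(0))$ and local compactness of the (locally finite) complex to apply Arzel\`a--Ascoli; and establish harmonicity of the limit together with energy convergence by the Korevaar--Schoen geodesic-interpolation competitor argument. On the interpolation step, note that your stated error term $O(\epsilon^{-1}\sup d(u,u_\ell)^2)$ is not the whole story: since $v$ is an arbitrary competitor, one also needs $\epsilon^{-2}\int_{B_2\setminus B_{2-\epsilon}}d^2(v,u)\to 0$, which follows from a Poincar\'e inequality on the annulus using $v=u$ on $\partial B_2$ in the trace sense. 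You flag this as delicate, and it is standard to fill.

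The genuine gap is in the nonconstancy step. (i) The integrand of $I_\phi(0,4)$ is supported where $\phi'(|y|/4)\neq 0$, i.e.\ on the annulus $2\le|y|\le 4$, whereas your uniform convergence holds only on compact subsets of $B_4(0)$ (in your write-up, only on $\overline{B_{5/2}(0)}$); so the lower bound on $I_{\phi,u_k}(0,4)$ does not ``pass to $u$'' as claimed. The fix is to first push the lower bound down to a smaller radius using identity \eqref{eq:height-technical} together with the monotone bound $\Ord_\phi(0,t)\le\Ord_\phi(0,4)\le\Lambda$, which gives $I_{\phi,u_k}(0,3)\ge c(\Lambda)\,I_{\phi,u_k}(0,4)$, and the integrand of $I_\phi(0,3)$ lives on $1.5\le|y|\le 3$, where uniform convergence is available. (ii) Even granting a positive limiting height, this only excludes the constant map $u\equiv 0_X$, not $u\equiv P$ for $P\ne 0_X$. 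You must re-use the hypothesis $0_X\in u_k(B_2(0))$ a second time: choose $x_k$ with $u_k(x_k)=0_X$, pass to a subsequential limit $x_*\in\overline{B_2(0)}$, and conclude $u(x_*)=0_X$ from uniform convergence; combined with (i) this forces $u$ to be nonconstant. As written, your argument uses $0_X\in u_k(B_2(0))$ only for image boundedness and leaves nonzero constant limits unexcluded.
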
 

\begin{lemma}[Lemma 7.4, \cite{dees}]\label{lem:ben7.4}
    Suppose that $\Omega\subset\R^n$ is convex open domain.  Suppose also that $u:\Omega\to X_C$ is a nonconstant continuous map into a conical $F$-connected complex which is homogeneous with respect to two distinct points $x_1$ and $x_2$, with orders $\alpha_1$ and $\alpha_2$, respectively.

Then $\alpha_1=\alpha_2$ and $u$ is independent of the $x_1-x_2$ direction, that is, \[
u(y+\lambda(x_1-x_2))=u(y)\] for any $y\in\R^n,\lambda\in\R$ for which both $y$ and $y+\lambda(x_1-x_2)$ lie in $\Omega$.
\end{lemma}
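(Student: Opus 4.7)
The plan is to conjugate the dilations about $x_1$ and $x_2$ to produce a translation along $v := x_2 - x_1$, and then to extract both $\alpha_1 = \alpha_2$ and the directional invariance by iterating the resulting identity. For $\mu > 0$, let $T_\mu(z) := x_1 + \mu(z - x_1)$ and $S_\mu(z) := x_2 + \mu(z - x_2)$ denote the dilations centered at $x_1$ and $x_2$ respectively. A direct computation gives
\begin{equation*}
T_\mu \circ S_{1/\mu}(z) = z + (\mu - 1) v.
\end{equation*}
For $\mu > 1$, the intermediate point $S_{1/\mu}(z) = x_2 + (1/\mu)(z - x_2)$ lies on the segment from $x_2 \in \Omega$ to $z \in \Omega$ and is therefore in $\Omega$ by convexity. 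Consequently, whenever $z, z + (\mu-1)v \in \Omega$, applying homogeneity first at $x_2$ (with factor $1/\mu$) and then at $x_1$ (with factor $\mu$) yields
\begin{equation*}
u(z + (\mu - 1) v) \;=\; \mu^{\alpha_1 - \alpha_2} u(z),
\end{equation*}
where the right-hand side denotes scaling of $u(z)$ about the cone point $0_X$.

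Next, I would derive $\alpha_1 = \alpha_2$ by iteration. Since $u$ is nonconstant and $u(x_1) = 0_X$, continuity produces some $z_0 \in \Omega$ with $u(z_0) \neq 0_X$. Pick $\tau > 0$ small enough that both $z_0 + \tau v$ and $z_0 + 2\tau v$ lie in $\Omega$. Applying the identity once with $\mu = 1 + 2\tau$ gives $u(z_0 + 2\tau v) = (1 + 2\tau)^{\alpha_1 - \alpha_2} u(z_0)$, while applying it twice with $\mu = 1 + \tau$ (first from $z_0$, then from $z_0 + \tau v$) gives $u(z_0 + 2\tau v) = (1 + \tau)^{2(\alpha_1 - \alpha_2)} u(z_0)$. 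Because scaling about $0_X$ is injective along the geodesic ray through $u(z_0) \neq 0_X$, we must have
\begin{equation*}
(1 + 2\tau)^{\alpha_1 - \alpha_2} = (1 + \tau)^{2(\alpha_1 - \alpha_2)} = (1 + 2\tau + \tau^2)^{\alpha_1 - \alpha_2}.
\end{equation*}
Since $\tau > 0$ and $1 + 2\tau \neq 1 + 2\tau + \tau^2$, this forces $\alpha_1 = \alpha_2$.

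Finally, with $\alpha_1 = \alpha_2$ the identity collapses to $u(z + \tau v) = u(z)$ whenever $\tau > 0$ and $z, z + \tau v \in \Omega$. For a general $y \in \Omega$ and $\lambda \in \R$ with $y + \lambda v \in \Omega$: if $\lambda > 0$ take $z = y$ and $\tau = \lambda$; if $\lambda < 0$ take $z = y + \lambda v$ and $\tau = -\lambda$. Either way $u(y + \lambda v) = u(y)$, which is the claimed invariance (equivalently in the $x_1 - x_2$ direction). The main technical care throughout is that every invoked homogeneity identity actually applies: the choice $\mu > 1$ in $T_\mu \circ S_{1/\mu}$ is precisely what makes convexity carry $S_{1/\mu}(z)$ inside $\Omega$ for free, leaving the only nontrivial hypothesis to be that the translated endpoint $z + (\mu-1) v$ lie in $\Omega$, which is built into the hypothesis of the conclusion itself.
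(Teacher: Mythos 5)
Your proof is correct, and it is the standard argument: conjugating the dilation about $x_2$ by the dilation about $x_1$ to produce a translation by $(\mu-1)(x_2-x_1)$, forcing $\alpha_1=\alpha_2$ by comparing one big step with two small ones, and then reading off the translation invariance. The paper itself does not reprove this lemma (it is quoted from \cite{dees}), but your argument matches the proof given there in all essentials, and you have correctly handled the only delicate points — that convexity keeps the intermediate point $S_{1/\mu}(z)$ in $\Omega$ for $\mu>1$, and that scaling about the cone point is injective on the ray through a non-cone-point value.
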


\subsection{$k$-Homogeneity and Pinching}

The following lemmas give us tools to show that in certain balls, the singular set lies close to an affine $k$-plane.

\begin{lemma}\label{lem:highordhom}
Let $u:B_r(0)\to X_C$ be a harmonic map into a conical $F$-connected complex, and suppose that $u$ is $k$-homogeneous with respect to the $k$-plane $V$.  Suppose that $y\notin V$ and $u(y)=0_X$.  Then any tangent map of $u$ at $y$ is $(k+1)$-homogeneous.

In particular, for any $\eta>0$ there is some $r_\eta>0$ so that $u$ is $(\eta,r_\eta,k+1)$-homogeneous at $y$.
\end{lemma}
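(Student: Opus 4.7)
The plan is to show that any tangent map $u_*$ of $u$ at $y$ is translation invariant along the $(k+1)$-dimensional subspace $V + \R y \subseteq \R^n$; combined with the automatic $\alpha'$-homogeneity of $u_*$ about $0$ (where $\alpha' = \Ord_u(y)$), this yields $(k+1)$-homogeneity. Note that $\dim(V + \R y) = k+1$ precisely because $y \notin V$ forces $\R y \cap V = \{0\}$.

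First observe that since $u$ is $k$-homogeneous about $0$ with respect to $V$, it is in fact $\alpha$-homogeneous about \emph{every} point of $V$, where $\alpha = \Ord_u(0)$: for $v \in V$, $\lambda > 0$, and $w \in \R^n$,
\[
u(v + \lambda w) = u(\lambda w) = \lambda^\alpha u(w) = \lambda^\alpha u(v + w),
\]
combining $V$-invariance with homogeneity about $0$. Now fix a tangent map $u_*$ at $y$, realized (in the sense of Definition~\ref{def:tangentmap}) as a uniform limit of rescalings $u_{\lambda_i}(z) := u(y + \lambda_i z)$ into rescaled targets $(X_C, d_{\lambda_i})$, with $\lambda_i \to 0$. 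Translation invariance along $V$ descends to $u_*$ directly: since $\lambda_i V = V$, we have $u_{\lambda_i}(z+v) = u(y + \lambda_i z + \lambda_i v) = u(y + \lambda_i z) = u_{\lambda_i}(z)$ for every $v \in V$, and this identity passes through the uniform limit.

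The additional direction comes from a ``homogeneity at infinity'' argument. Because $u$ is $\alpha$-homogeneous about $0 \in V$, each rescaling $u_{\lambda_i}$ is $\alpha$-homogeneous about $p_i := -y/\lambda_i$, meaning $u_{\lambda_i}(p_i + t(z - p_i)) = t^\alpha u_{\lambda_i}(z)$ for all $t>0$, where the target scaling is about the cone point $0_X$. For fixed $s \in \R$ and $z \in \R^n$, choose $t_i := 1 + s\lambda_i/|y|$; a direct computation gives
\[
p_i + t_i(z - p_i) = z + s\hat{y} + s\lambda_i|y|^{-1} z, \qquad \hat{y} := y/|y|,
\]
while $t_i \to 1$. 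Passing to the uniform limit as $\lambda_i \to 0$, and using that dilation by $t_i^\alpha \to 1$ about $0_X$ is a homothety that becomes innocuous in $d_{\lambda_i}$, one obtains $u_*(z + s\hat{y}) = u_*(z)$ for all $z \in \R^n$, $s \in \R$. Combined with $V$-invariance, $u_*$ is translation invariant along the $(k+1)$-dimensional subspace $V + \R y$, hence $(k+1)$-homogeneous about $0$.

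For the ``in particular'' clause, fix $\eta > 0$; the uniform convergence $u_{\lambda_i} \to u_*$ in the rescaled target metric, together with the comparability of $I$ and $I_\phi$, yields by standard unpacking of the domain/target rescaling that the $C^0$-estimate \eqref{eq:khom} holds with $h := u_*$ on $B_{r_\eta}(y)$ for $r_\eta = \lambda_i$ and $i$ sufficiently large. The main obstacle in this plan is the bookkeeping in the target rescaling during the ``homogeneity at infinity'' limit; the crucial point is that dilations about $0_X$ commute with the $d_\lambda$-normalization, so the factor $t_i^\alpha \to 1$ contributes no residual in the limit, reducing the argument to a routine uniform-convergence exercise.
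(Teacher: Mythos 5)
Your proof is correct, but the key step is handled by a genuinely different mechanism than the paper's. Both arguments obtain $V$-invariance of $u_*$ the same way (it passes directly through the blow-up since $\lambda_i V=V$). For the extra $\hat y$-direction, however, the paper never computes with points at infinity: it instead shows that $\Ord_{u_*}(y,\sigma)=\lim_i\Ord_u((1+r_i)y,r_i\sigma)=\Ord_u(y)$ is independent of $\sigma$ (using scale-invariance of the order and homogeneity of $u$ about $0$), invokes Lemma~\ref{lem:ben5.1} to conclude $u_*$ is homogeneous about the second center $y$, and then applies the rigidity statement Lemma~\ref{lem:ben7.4} (two centers of homogeneity force invariance along their difference). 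You instead observe that each rescaling $u_{\lambda_i}$ is $\alpha$-homogeneous about the receding center $p_i=-y/\lambda_i$ and extract translation invariance along $\hat y$ in the limit by the explicit choice $t_i=1+s\lambda_i/|y|$. Your computation of $p_i+t_i(z-p_i)$ is correct, and the limit passage is legitimate given the locally uniform convergence, the uniform Lipschitz bounds on the rescalings (needed to absorb the $s\lambda_i|y|^{-1}z$ perturbation of the argument), and the uniform bound on $d_{\lambda_i}(0_X,u_{\lambda_i}(z))$ (needed to kill the $t_i^\alpha\to1$ homothety); all of these come from the same compactness (Lemma~\ref{lem:ben6.3}) that produces the tangent map in the first place. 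What each approach buys: the paper's route is shorter given its toolkit, reusing the order-function rigidity lemmas that are needed elsewhere anyway; yours is more self-contained and elementary, bypassing the order function entirely at the cost of more careful domain/target rescaling bookkeeping (e.g., one should restrict to $z$ in a slightly smaller ball so that $z+s\hat y$ stays where the convergence holds, then extend by homogeneity about $0$ --- a domain issue the paper's proof shares). The ``in particular'' clause is handled the same way in both.
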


\begin{proof}
Consider any tangent map $u_*=\lim_{n\to\infty}u_{r_n}$ of $u$ at $y$, where $u_{r_n}(z)=u(y+r_nz)$ as in \eqref{eq:rescalings}, so that $u_*$ is realized as in Definition \ref{def:tangentmap}.  Then, we can use the scale-invariance of the order to compute that
\[
\text{Ord}_{u_*}(y,\sigma)=\lim_{n\to\infty}\text{Ord}_u(y+r_ny,r_n\sigma).
\]
Moreover, by the homogeneity of $u$ with respect to $0$, we have that
\[
\lim_{n\to\infty}\Ord_u(y+r_ny,r_n\sigma)=\lim_{n\to\infty}\Ord_u\left(y,\frac{r_n}{1+r_n}\sigma\right)=\Ord_u(y).
\]
Since this last expression is independent of $\sigma$, we conclude that $\text{Ord}_{u_*}(y,\sigma)$ is independent of $\sigma$ and hence by Lemma \ref{lem:ben5.1}, $u_*$ is homogeneous about $y$.

Since $u_*$ is homogeneous about $0$ and about $y$, by Lemma \ref{lem:ben7.4}, $u_*$ is independent of the span of $y$.  Also, because $u$ is invariant with respect to $V$, 
$u_*:B_1\to X_{u(0)}$ is itself invariant with respect to $V$.  In particular, $u_*$ is invariant with respect to the $(k+1)$-dimensional subspace spanned by $V$ and $y$. Therefore $u_*$ is $(k+1)$-homogeneous about $y$.
\end{proof}

\begin{lemma}\label{lem:cone-split}
 Let $u:B_4(0) \to X_C$ be a harmonic map to a conical $F$-connected complex satisfying \eqref{eq:Icond} such that $0_X \in u(B_2(0))$. Then for all $\eta,\rho>0$, there exists a $\delta=\delta(n,\rho, \eta, X_C, \Lambda)>0$ such that if $F=\{y \in B_1(0) : W_\rho^2(y)<\delta\}$ $\rho$-effectively spans a $k$-dimensional affine subspace $V$ then
    \[
B_2(0)\cap \mathcal{S}^k_{0,\eta,\delta}(u)
\subset B_{2\rho}(V).
    \]
\end{lemma}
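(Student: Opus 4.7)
The plan is a standard compactness-and-contradiction argument in the style of the cone-splitting lemmas of \cite{nv17,dees}.

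Suppose the conclusion fails: then there exist $\eta_0,\rho_0>0$ and sequences $\delta_i\downarrow 0$, harmonic maps $u_i:B_4(0)\to X_C$ satisfying \eqref{eq:Icond} with $0_X\in u_i(B_2(0))$, points $\{y^i_0,\dots,y^i_k\}\subset\{y\in B_1(0):W_{\rho_0}^2(y,u_i)<\delta_i\}$ that $\rho_0$-effectively span $k$-planes $V_i$, and points $z_i\in B_2(0)\cap\mathcal{S}^k_{0,\eta_0,\delta_i}(u_i)$ with $d(z_i,V_i)\geq 2\rho_0$. Since every quantity in sight is invariant under rescaling the target metric---and such rescaling sends the conical $X_C$ isometrically to itself---I would first normalize so that $I_{\phi,u_i}(0,4)=1$. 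Combined with \eqref{eq:Icond} and the monotonicity \eqref{eq:height-technical}, this produces uniform two-sided bounds on $I_{\phi,u_i}(x,r)$ for $x$ in a compact subdomain and $r$ in a compact subinterval of $(0,\infty)$.

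Next, I would invoke Lemma \ref{lem:ben6.3} (hypotheses supplied by the normalization, \eqref{eq:Icond}, and Lemma \ref{lem:ben4.2}) to pass to a subsequence with $u_i\to u_\infty$ uniformly on compact subsets of $B_2(0)$ and strongly in $W^{1,2}_{\mathrm{loc}}$, for some nonconstant harmonic $u_\infty$; simultaneously extract $y^i_j\to y^\infty_j$, $V_i\to V_\infty$ (still a $k$-plane, $\rho_0$-effectively spanned by $\{y^\infty_j\}$ up to shrinking $\rho_0$ infinitesimally), and $z_i\to z_\infty$ with $d(z_\infty,V_\infty)\geq 2\rho_0$. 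Strong $W^{1,2}$ convergence together with continuity of $E_\phi,I_\phi$ and the uniform positivity of $I_\phi$ at the $y^\infty_j$ lets the pinching pass to the limit: $\Ord_{\phi,u_\infty}(y^\infty_j,\rho_0)=\Ord_{\phi,u_\infty}(y^\infty_j,2)$ for each $j$. By Lemma \ref{lem:ben5.1}, $u_\infty$ is homogeneous about each $y^\infty_j$ on $B_2(y^\infty_j)\supset B_1(0)$; iterating Lemma \ref{lem:ben7.4} across the $\rho_0$-independent points yields that $u_\infty$ is $k$-homogeneous about $y^\infty_0$ with invariance direction $V_\infty-y^\infty_0$.

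To close the contradiction: since each $z_i$ lies in the non-splitting singular stratum and $X_C$ has a unique $0$-cell, $u_i(z_i)=0_X$, and uniform convergence forces $u_\infty(z_\infty)=0_X$. Since $z_\infty\notin V_\infty$, Lemma \ref{lem:highordhom} produces $r_0>0$ and a $(k+1)$-homogeneous harmonic $h$ witnessing $(\eta_0/2,r_0,k+1)$-homogeneity of $u_\infty$ at $z_\infty$. Uniform $C^0$ convergence on $B_{r_0}(z_\infty)$, together with convergence and uniform positivity of $I_{\phi,u_i}(z_i,r_0)$, then transfers this approximation: for all sufficiently large $i$, $u_i$ is $(\eta_0,r_0,k+1)$-homogeneous at $z_i$ (with competitor the recentering $h^{z_i}$). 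Since $\delta_i\downarrow 0$, eventually $r_0\in[\delta_i,1]$, contradicting $z_i\in\mathcal{S}^k_{0,\eta_0,\delta_i}(u_i)$.

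The hard part will be this last transfer step. Definition \ref{def:quant-hom} normalizes the $C^0$ comparison by the map-dependent quantity $(I_\phi(x,r)/r^{n-1})^{1/2}$, so one must combine uniform convergence $u_i\to u_\infty$ with convergence and uniform positive lower bound on $I_{\phi,u_i}(z_i,r_0)$ (both guaranteed by the initial normalization), and pick the comparison $h^{z_i}$ at each stage so that the initial slack $\eta_0/2$ absorbs both the uniform error $\|u_i-u_\infty\|_{C^0(B_{r_0}(z_\infty))}$ and the small error from translating $h$ from $z_\infty$ to $z_i$, leaving the final bound below $\eta_0$.
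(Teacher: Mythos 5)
Your proposal is correct and follows essentially the same route as the paper: contradiction, normalization $I_{\phi,u_i}(0,4)=1$, compactness via Lemma \ref{lem:ben6.3}, homogeneity of the limit from Lemmas \ref{lem:ben5.1} and \ref{lem:ben7.4}, then Lemma \ref{lem:highordhom} at the limit of the bad points and a transfer of the $(k+1)$-homogeneous competitor back to $u_i$. The "hard part" you flag at the end is exactly the content of the Claim in the paper's proof, handled the same way (uniform convergence of $u_i$ and of the recentered competitor, plus convergence of $I_\phi$).
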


\begin{proof}
    We proceed by contradiction. Assume that no such $\delta$ exists. Then there exists a sequence of harmonic maps $u_i:B_4(0) \to X$ satisfying \eqref{eq:Icond} so that:
    \begin{itemize}
        \item There exist collections $\{x_0^i, \dots, x_k^i\}\subset B_1(0)$ which $\rho$-effectively span an affine $k$-plane $V^i$.
        \item For each $j\in \{0, \dots, k\}$, $W_{\rho}^2(x_j^i,u_i)<\frac 1i$.
        \item There exist $y^i\in\mathcal{S}^k_{0,\eta,1/i}(u_i) \setminus B_{2\rho}(V^i)$ (in particular, $u_i(y^i)=0_X$).
    \end{itemize}
    We may regularize the maps so that $I_{\phi,u_i}(0,4)=1$, to apply Lemma \ref{lem:ben6.3}.

    By Lemma \ref{lem:ben6.3}, there is a(n unrelabeled) subsequence of maps $u_i:B_2(0)\to X_C$ such that $u_i$ converges uniformly to a nonconstant harmonic map $u$, the collection $\{x_0^i, \dots x_k^i\}$ converges to a collection $\{x_0, \dots, x_k\}\subset \overline{B_1(0)}$ which $\rho$-effectively spans some $k$-plane $V$ and such that $W_{\rho}^2(x_j,u)=0$ for all $j=0, \dots, k$, and $y^i$ converges to some point $y\notin V$ such that $u(y)=0_X$. By Lemma \ref{lem:ben5.1}, $u$ is homogeneous of some degree $\alpha_j\geq 1$ on $B_1(x_j)$ for each $j=0, \dots, k$. Moreover, since $\overline{B_1(0)} \subset \text{cvx}\left(\cap_{j=0}^k B_2(x_j)\right)$, by Lemma \ref{lem:ben7.4}, all of the $\alpha_j$ are equal and $u$ is homogeneous of degree $\alpha$ about each of the $x_j$, and is hence $k$-homogeneous about each of them.

    Now, because $u(y)=0_X$, and $u$ is homogeneous about the $k$-plane $V$, by Lemma \ref{lem:highordhom}, any tangent map $u_*$ at $y$ is $(k+1)$-homogeneous. In particular, there is some $r_\eta$ so that $u$ is $(\frac\eta4,r_\eta,k+1)$-homogeneous at $y$.

\begin{claim}
    For every $(k+1)$-homogeneous map $h$,
       \[
    \sup_{B_r(y)}d(u,h^y)\geq\frac{\eta}{2}\left(\frac{I_{\phi,u}(y,r)}{r^{n-1}}\right)^{\frac12}
    \]
    where $r:=r_\eta$ for convenience.
\end{claim}
\begin{proof}Suppose to the contrary that for some $r>0$ there is a $(k+1)$-homogeneous $h$ with
    \[  
    \sup_{B_r(y)}d(u,h^y)<\frac{\eta}{2}\left(\frac{I_{\phi,u}(y,r)}{r^{n-1}}\right)^{\frac12}.
    \]Since the $u_i\to u$ uniformly, for each fixed $r>0$, $I_{\phi,u_i}(y^i,r)\to I_{\phi,u}(y,r)$, so that in particular for $i$ sufficiently large,
    \[
    \frac23\left(\frac{I_{\phi,u}(y,r)}{r^{n-1}}\right)^{\frac12}\leq\left(\frac{I_{\phi,u_i}(y^i,r)}{r^{n-1}}\right)^{\frac12}\leq\frac32\left(\frac{I_{\phi,u}(y,r)}{r^{n-1}}\right)^{\frac12}.
    \]

    Since $h^{y^i}\to h^y$ uniformly on $B_{2r}(y)$ as $i\to\infty$ (because $y^i\to y$), for sufficiently large $i$ we have that
    \[
    \sup_{B_r(y^i)}d(u_i,h^{y^i})\leq\frac{7\eta}{8}\left(\frac{I_{\phi,u_i}(y^i,r)}{r^{n-1}}\right)^{\frac12}
    \]
    which, whenever $\frac1i<r$, contradicts the fact that $y^i\in\mathcal{S}^k_{0,\eta,1/i}(u_i)$.
\end{proof}

However, the above Claim contradicts the statement that $u$ is $(\frac\eta4,r_\eta,k+1)$-homogeneous at $y$.
    \end{proof}

\begin{lemma}\label{lem:ord-pinch}
    Let $u:B_{64}(0) \to X_C$ be a harmonic map to a conical $F$-connected complex satisfying \eqref{eq:Icond} and with $0_X \in u(B_2(0))$. Let $\rho, \epsilon >0$ and presume that $\sup_{B_2(0)}\text{Ord}_\phi(y,2) \leq \Gamma$. Then there exists $\delta=\delta(n,\rho,\epsilon,X_C, \Lambda)>0$ such that if $F=\{y \in B_{1}(0): \text{Ord}_\phi(y,\rho)>\Gamma-\delta\}$ $\rho$-effectively spans a $k$-dimensional affine subspace $V$ then for all $x \in V \cap B_{1}(0)$, 
    \[
\text{Ord}_\phi(x,2\rho^2)> \Gamma - \epsilon.    
\]

\end{lemma}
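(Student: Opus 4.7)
The plan is to argue by contradiction and compactness, in parallel to the proof of Lemma \ref{lem:cone-split}. Suppose no such $\delta$ exists. Then for each $i \in \N$ there is a harmonic map $u_i : B_{64}(0) \to X_C$ satisfying \eqref{eq:Icond}, with $0_X \in u_i(B_2(0))$ and $\sup_{B_2(0)} \Ord_{\phi,u_i}(\cdot,2) \leq \Gamma$, together with a $\rho$-effectively spanning collection $\{x_0^i, \dots, x_k^i\} \subset B_1(0)$ satisfying $\Ord_{\phi,u_i}(x_j^i, \rho) > \Gamma - 1/i$ for every $j$, and a ``bad'' point $y^i \in V^i \cap B_1(0)$ (where $V^i$ is the affine $k$-plane $\rho$-effectively spanned by the $x_j^i$) with $\Ord_{\phi,u_i}(y^i, 2\rho^2) \leq \Gamma - \epsilon$. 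After normalizing so that $I_{\phi,u_i}(0,4) = 1$, Lemma \ref{lem:ben6.3} (applied after a routine rescaling so the resulting domain balls contain every $B_2(x_j^i)$ and every $B_{2\rho^2}(y^i)$) extracts an unrelabeled subsequence converging uniformly and in $W^{1,2}_{\mathrm{loc}}$ to a nonconstant harmonic $u$. By compactness we may further assume $x_j^i \to x_j$ and $y^i \to y$, with $\{x_0, \dots, x_k\}$ still $\rho$-effectively spanning an affine $k$-plane $V \ni y$.

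The first key step is to transfer the order pinching to the limit. Monotonicity of $\Ord_\phi$ combined with the sandwich
\begin{equation*}
\Gamma - \tfrac{1}{i} < \Ord_{\phi,u_i}(x_j^i, \rho) \leq \Ord_{\phi,u_i}(x_j^i, 2) \leq \Gamma
\end{equation*}
forces $W_\rho^2(x_j^i, u_i) \to 0$; the uniform and strong $W^{1,2}$ convergence then passes continuity of $E_\phi$ and $I_\phi$ to the limiting centers, yielding $\Ord_{\phi,u}(x_j, \rho) = \Ord_{\phi,u}(x_j, 2) = \Gamma$ for every $j$. By Lemma \ref{lem:ben5.1}, $u$ is homogeneous of degree $\Gamma$ about each $x_j$ on $B_2(x_j)$.

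Next, I would propagate homogeneity across $V$. Since $u$ is homogeneous of the common degree $\Gamma$ about each of the $k+1$ points $x_j$ and these effectively span $V$, iterated application of Lemma \ref{lem:ben7.4} shows that $u$ is invariant under translations parallel to $V - x_0$ on $\bigcap_j B_2(x_j) \supset B_1(0)$, and hence is homogeneous of degree $\Gamma$ about every point of $V \cap B_1(0)$ on a ball of some uniform radius (in particular on $B_{2\rho^2}$). Consequently $\Ord_{\phi,u}(y, 2\rho^2) = \Gamma$. But passing to the limit in $\Ord_{\phi,u_i}(y^i, 2\rho^2) \leq \Gamma - \epsilon$ gives $\Ord_{\phi,u}(y, 2\rho^2) \leq \Gamma - \epsilon$, which is the desired contradiction.

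The main obstacle will be the bookkeeping for the compactness step: one must ensure that the rescaling and the ball on which convergence is obtained are large enough that every $\Ord_\phi$ quantity we wish to take to the limit is defined for $u$, and that the $W^{1,2}$ convergence is strong on the relevant annuli so that continuity of the monotone quantity at $x_j$ and $y$ can be asserted. Both facts are standard consequences of the regularity theory for harmonic maps to $F$-connected complexes developed in \cite{dees, gromov-schoen, korevaar-schoen1}, but they require careful verification before Lemmas \ref{lem:ben5.1} and \ref{lem:ben7.4} are invoked.
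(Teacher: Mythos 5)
Your proposal is correct and follows essentially the same compactness-and-contradiction argument as the paper: normalize, extract a limit via Lemma \ref{lem:ben6.3}, use the vanishing pinching with Lemmas \ref{lem:ben5.1} and \ref{lem:ben7.4} to get homogeneity of degree $\Gamma$ about each spanning point and invariance along $V$, and then contradict the bad point via constancy of $\Ord_\phi(\cdot,2\rho^2)$ on $V$. The one point to tighten is that, since $\delta$ is required to be independent of $\Gamma$, the contradicting sequence comes with varying bounds $\Gamma_i$ rather than a single fixed $\Gamma$; the paper handles this by invoking Lemma \ref{lem:ben4.2} to get $\Gamma_i\leq C(\Lambda)$ and passing to a further subsequence with $\Gamma_i\to\Gamma$, after which your argument goes through verbatim.
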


    \begin{proof}

   Presume no such $\delta>0$ exists. Then there exists a sequence of such maps $u_i$ with corresponding sets $F_i$ which $\rho$-effectively span $V_i$ using the points $\{y_{0}^i, \dots, y_{k}^i\}\subset B_{1}(0)$. Moreover $\text{Ord}_{\phi, u_i}(y_{j}^i,\rho)>\Gamma_i - \frac1i$, but there exists a point $x^i \in V_i \cap B_{1}(0)$ such that $\text{Ord}_{\phi,u_i}(x^i,2\rho^2) \leq \Gamma_i - \epsilon$. We note that by Lemma \ref{lem:ben4.2}, we have the uniform bound $\Gamma_i\leq C(\Lambda)$. Then normalizing $I_{\phi,u_i}(0,4)=1$ and using Lemma \ref{lem:ben6.3} we can find a subsequence such that:
   \begin{itemize}
       \item $u_i \to u$ uniformly in $C^0(B_2(0))$ and strongly in $W^{1,2}(B_2(0))$, where $u$ is a harmonic map.
       \item $\Gamma_i\to\Gamma$.
       \item $V_i \to V$, a $k$-dimensional affine subspace and $V$ is $\rho$-effectively spanned by $\{y_0,\dots, y_k\}\subset \overline{B_{1}(0)}$, where $y_j= \lim_{i \to \infty} y_j^i$, $\text{Ord}_{\phi,u}(y_j, \rho) \geq \Gamma$.
       \item Finally $x^i \to x\in V\cap \overline{B_{1}(0)}$ such that $\text{Ord}_{\phi,u}(x,2\rho^2) \leq \Gamma - \epsilon$.
   \end{itemize}
Since $W_{\rho}^2(y_j,u)=0$, the map $u$ is homogeneous of some degree $\alpha_j\geq 1$ on $B_2(y_j)$. Then, as in the proof of Lemma \ref{lem:cone-split}, we use Lemmas \ref{lem:ben5.1} and \ref{lem:ben7.4} to see that $u$ is homogeneous of a fixed degree $\alpha=\Gamma$ about each $y_j$, and is invariant with respect to the span of the $y_j$.  In particular, $u$ must be homogeneous of degree $\Gamma$ about every point in $V\cap B_{1}(0)$ and is invariant with respect to $V$. By the constancy of $\Ord_{\phi,u}(\cdot,2\rho^2)$ in $V$, we must have that $\Ord_{\phi,u}(x,2\rho^2)=\Gamma$, which is a contradiction.
\end{proof}

\subsection{Translating Inhomogeneities}\label{sec:conesplittingtranslating}

In the key covering arguments, an important note is that if a point $y$ is pinched, and is not $(\eta,r,k+1)$-homogeneous, then pinched points close to $y$ also cannot be $(\eta,r,k+1)$-homogeneous.  This subsection provides two results that make this remark precise, the proofs of which differ depending on the ``closeness" of $y$ to the comparison point. 

Before proving these results, we prove two lemmas that give us more precise control on the gradient of a homogeneous map approximating a harmonic map.

\begin{lemma}\label{lem:hom-grad-bound}
        Suppose that $u:B_8(0)\to X_C$ is a harmonic map satisfying \eqref{eq:Icond} and $0_X\in u(B_1(0))$. Let $h:\mathbb R^n \to X_C$ be a $k$-homogeneous map of order $\alpha$ such that 
        \[
        \sup_{B_1(0)}d(u,h) \leq C'.
        \]Then there exists a $k$-homogeneous map $\widehat h:\mathbb R^n \to X_C$ satisfying
        \[
         \sup_{B_1(0)}d(u,\widehat h) \leq 2C'
        \]and such that 
        \[
        \sup_{B_1(0)} |\nabla\widehat h| \leq  (1+4\alpha)\sup_{ B_2(0)} |\nabla u|.
        \]
\end{lemma}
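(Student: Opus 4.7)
The plan is to begin by setting $L := \sup_{B_2(0)} |\nabla u|$, which is finite by the Korevaar--Schoen Lipschitz regularity valid under \eqref{eq:Icond}. If the bound $\sup_{B_1(0)} |\nabla h| \leq (1+4\alpha) L$ already holds, we simply take $\widehat h := h$ and are done; the substantive content is therefore the construction of a replacement $\widehat h$ when this bound fails.

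The central device will be the canonical target-scaling $\rho \star$ about the cone point $0_X$ of $X_C$, for $\rho \in [0,1]$. Applied pointwise, this scaling preserves $k$-homogeneity of degree $\alpha$: if $h$ is $k$-homogeneous of degree $\alpha$ about $x_0$ with invariance subspace $V$, so is $\rho \star h$, and the gradient scales linearly, $|\nabla(\rho \star h)| = \rho\, |\nabla h|$. Together with the hypothesis $0_X \in u(B_1(0))$ and the Lipschitz bound on $u$, this gives the uniform height control
\begin{equation*}
d(h(y), 0_X) \leq d(h(y), u(y)) + d(u(y), u(x_*)) \leq C' + 2L
\end{equation*}
for every $y \in B_1(0)$, where $x_* \in B_1(0)$ is chosen with $u(x_*) = 0_X$.

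Setting $\widehat h := \rho \star h$, the triangle inequality in $X_C$ yields $\sup_{B_1}d(u,\widehat h) \leq C' + (1-\rho)(C'+2L)$, so the requirement $\sup_{B_1}d(u,\widehat h) \leq 2C'$ translates into $\rho \geq 2L/(C'+2L)$, while the requirement $|\nabla\widehat h| \leq (1+4\alpha)L$ translates into $\rho \leq (1+4\alpha)L/\sup_{B_1}|\nabla h|$. A valid $\rho$ exists whenever these intervals overlap; when they do not (notably when $L \ll C'$, including the degenerate case $L=0$ which forces $u \equiv 0_X$), I would instead take $\widehat h \equiv 0_X$, which is trivially $k$-homogeneous, has vanishing gradient, and satisfies $\sup_{B_1}d(u,0_X) \leq 2L \leq 2C'$. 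In the remaining regime, I expect the right replacement to be a spatially varying target scaling $\widehat h(y) := \rho(y) \star h(y)$, where $\rho(\cdot)$ is itself a scalar $k$-homogeneous cutoff about $x_0$ invariant under $V$; this interpolates between $\rho \star h$ and $0_X$ while preserving $k$-homogeneity (possibly of a different degree), and the factor $1+4\alpha$ should emerge when differentiating the cutoff against the height bound $C'+2L$, with the ``$1$'' reflecting the $|\nabla u|$ contribution via triangle-like estimates and the ``$4\alpha$'' reflecting the radial contribution controlled by the homogeneity degree.

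The step I expect to be the main obstacle is verifying that the spatially varying cutoff can be arranged so both the $2C'$ distance bound and the $(1+4\alpha)L$ gradient bound hold simultaneously, in particular controlling the angular (non-radial) part of $|\nabla\widehat h|$. Absorbing the angular contribution into the $4\alpha$ factor---rather than producing an uncontrolled gradient term dependent on the angular oscillation of $h$ on the unit sphere around $x_0$---will be the delicate point, and will likely rely on a $V$-invariant spherical averaging or truncation of $h$'s spherical profile, chosen to preserve both the homogeneity and the triangle-inequality estimates above.
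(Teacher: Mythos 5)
There is a genuine gap here, and it lies exactly at the point you flag as ``the main obstacle.'' Any construction of the form $\widehat h(y)=\rho(y)\star h(y)$ (scalar target rescaling toward the cone point, constant or spatially varying) cannot work in general, because the enemy is the angular oscillation of $h$: the hypothesis only controls $\sup_{B_1}d(u,h)$, so $h$'s profile on the unit sphere may oscillate at arbitrarily fine scale with amplitude $\leq C'$, making $\sup|\nabla h|$ arbitrarily large while $L=\sup_{B_2}|\nabla u|$ stays fixed. A scalar factor $\rho(y)$ only multiplies that angular gradient by $\rho(y)$, so killing it forces $\rho(y)\to 0$, i.e.\ pushes $\widehat h$ toward $0_X$; this is only compatible with $\sup d(u,\widehat h)\leq 2C'$ when $u$ itself is within $O(C')$ of $0_X$. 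Your own interval analysis shows the two constraints on $\rho$ are compatible only when $\sup_{B_1}|\nabla h|\lesssim(1+4\alpha)(C'+2L)$, and your fallback $\widehat h\equiv 0_X$ requires $L\leq C'$; in the regime $L>C'$ with $\sup|\nabla h|$ huge, neither branch applies. The closing suggestion of ``truncation of $h$'s spherical profile'' gestures in the right direction but is never carried out, and the argument cannot be completed along the rescaling route.

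The idea you are missing is that $\widehat h$ should be built from $u$, not from $h$. Writing $V=\R^k\times\{0\}$ for the invariance subspace and $(v,w)\in\R^k\times\R^{n-k}$, the paper sets
\[
\widehat h(v,w)=|w|^{\alpha}\,u\!\left(0,\tfrac{w}{|w|}\right)\quad(w\neq 0),\qquad \widehat h(v,0)=0_X,
\]
where the scalar multiplication is the cone scaling about $0_X$. This is automatically $V$-invariant and homogeneous of degree $\alpha$, and its spherical profile is $u$'s, so the angular part of $|\nabla\widehat h|$ is controlled by $\sup_{B_2}|\nabla u|$ while the radial part is controlled by $\alpha\, d(u(0,\omega),0_X)\leq 2\alpha L$ (using $0_X\in u(B_1(0))$ and $\alpha\geq 1$), giving the $(1+4\alpha)L$ bound. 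The distance bound follows by comparing on the unit sphere: $d(\widehat h(v,w),h(v,w))=|w|^{\alpha}d\bigl(u(0,\tfrac{w}{|w|}),h(0,\tfrac{w}{|w|})\bigr)\leq C'$, and then the triangle inequality with $d(u,h)\leq C'$ yields $\sup_{B_1}d(u,\widehat h)\leq 2C'$. Note in particular that no step requires $\widehat h$ to resemble $h$ beyond sharing $\alpha$ and $V$, which is what makes the uncontrolled quantity $\sup|\nabla h|$ irrelevant.
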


\begin{proof}
Presume that $h$ is invariant with respect to $V=\R^k \times \{0\}$. For $(v,w) \in \R^k \times \R^{n-k}$ define
\[
\widehat h(v,w) =\left\{\begin{array}{ll}
|w|^\alpha u\left(0,\frac w{|w|}\right)& \text{ if } w \neq 0,\\
0_X & \text{ otherwise}.
\end{array}\right.
\]Observe that $\widehat h$ is invariant with respect to $V$ and, since (for $w \neq 0$, $\lambda>0$)
\[
\widehat h(\lambda v, \lambda w)=\lambda^\alpha |w|^\alpha u\left(0,\frac w{|w|}\right)=\lambda^\alpha \widehat h(v,w),
\]$\widehat h$ is homogeneous of degree $\alpha$.

 Now for all $(v,w) \in \Sp^{n-1}$, if $w \neq 0$ then
 \[
 d(\widehat h(v,w), h(v,w))= d\left( |w|^\alpha u\left(0,\frac w{|w|}\right), |w|^\alpha h\left(0,\frac w{|w|}\right)\right) \leq C'.
 \]And if $w=0$ then $\widehat h(v,0)=h(v,0)=0_X$. By properties of homogeneity, it follows that 
 \[
 \max_{B_1(0)}d(u, \widehat h) \leq  \max_{B_1(0)}d(u,h) +  \max_{B_1(0)}d(h, \widehat h) \leq 2C'.
 \]The gradient bound is immediate.
    \end{proof}

\begin{lemma}\label{lem:hom-ord-bound}
    Suppose that $u:B_8(0)\to X_C$ is a harmonic map satisfying \eqref{eq:Icond} with $I(0,1)=1$ and $0_X\in u(B_1(0))$.  There exist $\eta_0(n)>0$ and $A(\Lambda,n)>1$ so that for any $0<\eta<\eta_0$, if $h:B_8(0)\to X_C$ is a homogeneous map such that
    \[
    \sup_{B_1(0)}d(u,h)\leq\eta,
    \]
    then $\alpha\leq A$, where $\alpha$ is the degree of $h$.
\end{lemma}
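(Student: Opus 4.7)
\emph{Proposal.} I plan to prove the lemma by contradiction together with a compactness argument exploiting the rigid scaling of homogeneous maps against the height normalization $I(0,1)=1$. Suppose no such $A(\Lambda,n)$ exists; then for any candidate $\eta_0$ I can find a sequence of harmonic maps $u_i:B_8(0)\to X_{C,i}$ satisfying \eqref{eq:Icond} with $I_{u_i}(0,1)=1$ and $0_X\in u_i(B_1(0))$, and homogeneous maps $h_i:B_8(0)\to X_{C,i}$ of degrees $\alpha_i\to\infty$ with $\sup_{B_1(0)}d(u_i,h_i)\leq\eta$ for some fixed $\eta<\eta_0$ to be pinned down at the end.

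The geometric fact driving the argument is that any homogeneous $h$ of degree $\alpha$ with $h(0)=0_X$ satisfies
\[
\sup_{B_r(0)}d(h,0_X)\;=\;r^\alpha\sup_{\partial B_1(0)}d(h,0_X)\qquad\text{for every } r\in(0,1].
\]
By Korevaar--Schoen regularity the $u_i$ admit a uniform Lipschitz bound $L(\Lambda,n)$ on $B_2(0)$, which together with $\sup_{B_1(0)}d(u_i,h_i)\leq\eta$ and $h_i(0)=0_X$ forces $\sup_{\partial B_1(0)}d(h_i,0_X)\leq M(\Lambda,n)$. Hence for every fixed $r\in(0,1)$,
\[
\sup_{B_r(0)}d(u_i,0_X)\;\leq\;\eta+r^{\alpha_i}M\;\longrightarrow\;\eta\quad\text{as }i\to\infty.
\]
I then apply Lemma \ref{lem:ben6.3} to extract a subsequence $u_i\to u$ converging uniformly on $B_2(0)$ to a harmonic map $u$. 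Its hypotheses follow from the Lipschitz and order bounds combined with the monotonicity identity \eqref{eq:height-technical}, which transfers $I_{u_i}(0,1)=1$ and $\Ord_\phi(0,8)\leq\Lambda$ into uniform two-sided control on $I_{\phi,u_i}(0,4)$. Passing to the limit along $r_j\nearrow 1$ in the display above and using continuity of $u$ on the closed ball gives $\sup_{\overline{B_1(0)}}d(u,0_X)\leq\eta$.

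Finally, since $h_i(0)=0_X$ and $C^0$-closeness give $d(u(0),0_X)\leq\eta$, the triangle inequality yields $d(u(y),u(0))\leq 2\eta$ for every $y\in\overline{B_1(0)}$, so
\[
1\;=\;\lim_{i\to\infty}I_{u_i}(0,1)\;=\;I_u(0,1)\;\leq\;4\eta^2\,\mathcal{H}^{n-1}(\partial B_1(0)).
\]
Choosing $\eta_0(n)$ small enough that the right-hand side is strictly less than $1$ produces the desired contradiction, and the existence of $A(\Lambda,n)$ follows from this compactness obstruction (no specific value being required). The main technical obstacle I expect is the bookkeeping needed to verify the hypotheses of Lemma \ref{lem:ben6.3}, especially the conversion of the sphere-based normalization $I_{u_i}(0,1)=1$ into uniform two-sided bounds on the smoothed $I_{\phi,u_i}(0,4)$ via \eqref{eq:height-technical} and the Lipschitz bound; once this compactness step is in place, everything else is driven by the elementary decay $r^{\alpha_i}\to 0$ for $r<1$.
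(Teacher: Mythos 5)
Your core argument is sound, and it takes a genuinely different route from the paper's. The paper argues directly and quantitatively: from the normalization and \eqref{eq:height-technical} it gets a uniform Lipschitz bound, uses \eqref{eq:energy-deriv} to pick an explicit radius $r_0(n,\Lambda)$ with $r_0E_\phi(0,r_0)\geq\beta E_\phi(0,1)$, and then notes that if $\alpha\geq A$ with $r_0^{A}C(n,\Lambda)<\eta_0$, the height $I_\phi(0,r_0)$ collapses below $\beta$, so $\Ord_\phi(0,r_0)>\Ord_\phi(0,1)$, contradicting monotonicity of the smoothed order. That buys an explicit $A$ and never needs a limit map. Your version instead plays the decay $r^{\alpha_i}\to0$ against the fixed normalization $I(0,1)=1$ through a compactness argument; the obstruction $1=I_u(0,1)\leq 4\eta^2\mathcal{H}^{n-1}(\partial B_1(0))$ is clean and correctly gives $\eta_0=\eta_0(n)$, at the cost of an inexplicit $A$.

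Two caveats on the compactness step. First, the constant $A$ in the statement is uniform over all conical targets (it depends only on $\Lambda,n$), so the negation genuinely produces a sequence $u_i:B_8(0)\to X_{C,i}$ with \emph{varying} targets, which Lemma \ref{lem:ben6.3} (stated for a single fixed $X_C$) does not cover; fixing the target salvages the argument but only yields $A(\Lambda,n,X_C)$, which is weaker than claimed (though likely harmless for the paper's applications, where other constants already depend on $X_C$). Second, the lower bound on $I_{\phi,u_i}(0,4)$ required by Lemma \ref{lem:ben6.3} is delicate: it requires converting the unsmoothed, center-based normalization $I(0,1)=1$ into a lower bound on the smoothed, cone-point-based height, and the natural estimate picks up a $\Lambda$-dependence that threatens to leak into $\eta_0$. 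You can sidestep this entirely: your contradiction uses neither harmonicity nor nonconstancy of the limit, only uniform subconvergence on $\overline{B_1(0)}$, which already follows from the uniform Lipschitz bound together with the pointwise bound $d(u_i,0_X)\leq 2L+\eta$ and Arzel\`a--Ascoli (for a fixed, locally compact target). With those points addressed, the proof goes through.
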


\begin{proof}
    In this proof, whenever $\Ord_\phi,E_\phi,I_\phi$ appear, they are the quantities for the map $u$, not for the map $h$ (the latter map's energy, height, and order will not be relevant in the proof).   We see directly that $\Ord_\phi(0,1)=E_\phi(0,1)$.  We also have a uniform upper bound on $I_\phi(0,8)$ from \eqref{eq:height-technical} and hence an upper bound on $E_\phi(0,8)\geq E(0,4)$.  From this energy bound, we deduce $\sup_{B_2(0)}|\nabla u| \leq C(\Lambda)$ by \cite[Theorem 2.4.6]{korevaar-schoen1}. 

    For $0<r\leq1$, if $\max_{\partial B_r(0)} d(u,0_X) \leq \tau$ then $I_\phi(0,r)\leq C(n)\tau^2$.  Fix $\eta_0(n)$ so that $\beta(n):=C(n)(2\eta_0)^2<1$ and consider any $\eta\in(0,\eta_0)$. By \eqref{eq:energy-deriv} and the uniform gradient bound for $u$, we can bound the derivative $\partial_rE_\phi(0,r)\leq C(n,\Lambda)$ for $r \in (0,2)$. Using this derivative bound, choose $r_0(n,\Lambda)<1$ so that 
    \[
    r_0E_\phi(0,r_0)\geq  \beta E_\phi(0,1).
    \]

By hypothesis,
$\max_{\partial B_1(0)} d(h,0_X)\leq C(\Lambda)+\eta\leq C(n,\Lambda)$.  In particular, by homogeneity we have that
    \[
\max_{\overline{B_{r_0}(0)}}    d(h,0_X)\leq r_0^\alpha C(n,\Lambda).
    \]
Choose $A(n,\Lambda)$ such that $r_0^AC(n,\Lambda)<\eta_0$.
If $\alpha\geq A$, then $\sup_{B_{r_0}(0)}d(h,0_X)\leq \eta_0$ so $\sup_{B_{r_0}(0)}d(u,0_X)\leq(\eta+\eta_0)$ and \[I_\phi(0,r_0)\leq C(n)(\eta+\eta_0)^2<C(n)(2\eta_0)^2=\beta.\]  
 We compute $\Ord_\phi(0,r_0)$ directly:
    \[
    \Ord_\phi(0,r_0)=\frac{r_0E_\phi(0,r_0)}{I_\phi(0,r_0)}>\frac{\beta E_\phi(0,1)}{\beta}=E_\phi(0,1)=\Ord_\phi(0,1).
    \]
    This contradicts the monotonicity of the order and therefore $\alpha<A$.
\end{proof}

Combining Lemma \ref{lem:hom-grad-bound}, Lemma \ref{lem:hom-ord-bound}, and Lipschitz bounds for $u$ we see that if $u:B_8(0)\to X_C$ is a harmonic map with $0_X\in u(B_1(0))$ satisfying \eqref{eq:Icond} and $I_\phi(0,1)=1$, and $h$ is a $k$-homogeneous map so that
\[
\sup_{B_1(0)}d(u,h)\leq\eta,
\]
where $\eta<\eta_0(n)$ is sufficiently small, then there is a $k$-homogeneous map $\widehat{h}$ so that
\[
\sup_{B_1(0)}d(u,\widehat{h})\leq2\eta
\]
and
\[
\sup_{B_1(0)}|\nabla\widehat{h}|\leq C(n,\Lambda).
\]

\begin{lemma}\label{lem:move-inhom}
      Let $u:B_{64}(0) \to X_C$ be a harmonic map into a conical $F$-connected complex with $0_X \in u(B_1(0))$ satisfying \eqref{eq:Icond}. Given $0<\rho<1$ and $0<\eta<\eta_0(n)$, where $\eta_0(n)$ is chosen as in Lemma \ref{lem:hom-ord-bound}, there exists $\delta=\delta(n,\rho, \eta,X_C,\Lambda)>0$ such that if 
      \begin{itemize}
          \item the set
          \[
          F(B_{1}(0)):= \{z \in\mathcal S^k_{0, \eta, \delta}(u) \cap B_1(0):W^{8}_1(z) < \delta\}
          \]$\rho$-effectively spans a $k$-dimensional affine space $V$,
          \item $x \in B_1(0) \cap V$ with $W^{8}_1(x)<\delta$,
      \end{itemize}then $u$ is not $(\eta/3,t,k+1)$-homogeneous at $x$ for all $t \in [2\rho,2]$.
\end{lemma}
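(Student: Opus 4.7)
The plan is to argue by contradiction and compactness, closely paralleling Lemma \ref{lem:cone-split} and Lemma \ref{lem:ord-pinch}. Suppose no such $\delta>0$ exists; extract a sequence of maps $u_i$ with spanning sets $\{z_0^i,\dots,z_k^i\}\subset F_i(B_1(0))$ $\rho$-effectively spanning affine $k$-planes $V_i$, points $x_i\in V_i\cap B_1(0)$ with $W^8_1(x_i,u_i)<\delta_i\to 0$, scales $t_i\in[2\rho,2]$, and $(k+1)$-homogeneous maps $h_i$ of degrees $\beta_i$ witnessing $(\eta/3,t_i,k+1)$-homogeneity of $u_i$ at $x_i$. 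Normalizing $I_{\phi,u_i}(0,4)=1$, Lemma \ref{lem:ben6.3} and compactness in the Grassmannian yield a subsequential limit $u_i\to u$ uniformly on $B_2(0)$, together with $z_j^i\to z_j$, $x_i\to x\in V$, $V_i\to V$, $t_i\to t\in[2\rho,2]$, and $\beta_i\to\beta$ (bounded via Lemma \ref{lem:hom-ord-bound}). Replacing $h_i$ by $\widehat h_i$ using Lemma \ref{lem:hom-grad-bound} doubles the approximation constant to $2\eta/3$ but yields uniform Lipschitz bounds on $\widehat h_i$; Arz\'ela-Ascoli then produces $\widehat h_i\to\widehat h$ uniformly, with $\widehat h$ itself $(k+1)$-homogeneous of degree $\beta$.

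In the limit, zero pinching at $x$ and each $z_j$ combined with Lemmas \ref{lem:ben5.1} and \ref{lem:ben7.4} implies that $u$ is homogeneous of a common degree $\alpha$ about each of these points, and hence $k$-invariant along the vector subspace $V_0:=V-V$, with $u(z_j)=u(x)=0_X$. Passing the approximation to the limit gives $\sup_{B_t(x)}d(u,\widehat h^x)\le(2\eta/3)(I_\phi(x,t)/t^{n-1})^{1/2}$, and the substitution $y'=y-(z_j-x)$ together with the $V_0$-invariance of $u$ transfers this verbatim to each $z_j$, using $I_\phi(x,t)=I_\phi(z_j,t)$.

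The crux is then to rescale this approximation from scale $t\in[2\rho,2]$ down to scale $1$ at $z_j$. Using that $u$ is exactly homogeneous of degree $\alpha$ about $z_j$ and $\widehat h$ of degree $\beta$ about $0$, the change of variables $y=z_j+tz$ yields $u(z_j+tz)=t^\alpha u(z_j+z)$ and $\widehat h(tz)=t^\beta\widehat h(z)$. Scaling about the cone vertex $0_X$ by the factor $t^{-\alpha}$, which multiplies target distances by $t^{-\alpha}$, then gives
\begin{equation*}
\sup_{z\in B_1(0)}d\bigl(u(z_j+z),\,t^{\beta-\alpha}\widehat h(z)\bigr)\le(2\eta/3)(I_\phi(z_j,1))^{1/2}.
\end{equation*}
The map $t^{\beta-\alpha}\widehat h$ is still $(k+1)$-homogeneous (same invariant subspace, degree $\beta$), so $u$ is $(2\eta/3,1,k+1)$-homogeneous at $z_j$; transferring back along $u_i\to u$, for $i$ large $u_i$ is $(\eta,1,k+1)$-homogeneous at $z_j^i$ with $1\in[\delta_i,1]$, contradicting $z_j^i\in\mathcal{S}^k_{0,\eta,\delta_i}(u_i)$. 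The main obstacle is this rescaling step: a naive restriction from $B_t(z_j)$ to $B_1(z_j)$ fails when $t>1$, since the ratio $(I_\phi(z_j,t)/t^{n-1})^{1/2}/(I_\phi(z_j,1))^{1/2}=t^\alpha$ can be arbitrarily large in $\alpha$. The compactness argument is essential for producing \emph{exact} homogeneity of $u$ about $z_j$ in the limit; combined with cone scaling this allows us to replace $\widehat h$ by its cone-scalar-multiple $t^{\beta-\alpha}\widehat h$, restoring the correct target normalization without losing the $\eta/3$ constant.
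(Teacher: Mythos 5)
Your proposal is correct and follows essentially the same route as the paper's proof: contradiction plus compactness via Lemma \ref{lem:ben6.3}, replacement of the comparison maps using Lemma \ref{lem:hom-grad-bound} (and Lemma \ref{lem:hom-ord-bound}) to get uniform Lipschitz bounds and an Arzel\`a--Ascoli limit $\widehat h$, exact homogeneity of the limit $u$ about $x$ and the spanning points from Lemmas \ref{lem:ben5.1} and \ref{lem:ben7.4}, transfer of the approximation to each $z_j$ by $V$-invariance, and a contradiction with $z_j^i\in\mathcal S^k_{0,\eta,\delta_i}(u_i)$. The one genuine difference is your explicit cone-rescaling step replacing $\widehat h$ by $t^{\beta-\alpha}\widehat h$ to move the approximation from scale $t\in[2\rho,2]$ down to scale $1$; the paper contradicts directly at scale $t$, which is only licensed by Definition \ref{def:local-quant-strat} when $t\le1$, so your extra step is a legitimate (and for $t>1$, needed) refinement rather than a departure.
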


\begin{proof}
    Presume no such $\delta$ exists.  There is then a sequence of maps $u_i:B_{64}\to X_C$ satisfying \eqref{eq:Icond}, with corresponding sets $\{y_0^i,\dots,y_k^i\}\subset B_1(0)$ which $\rho$-effectively span affine subspaces $V^i$.  Moreover, these $y_j^i$ are such that $W_1^8(y_j^i)<\frac1i$, and $y_j^i\in\mathcal{S}^k_{0,\eta,1/i}(u_i)$.  Finally, there is a sequence of points $x^i\in B_1(0)\cap V^i$ with $W_1^8(x^i)<\frac1i$ and a sequence of scales $t_i\in[2\rho,2]$ so that $u$ is $(\eta/3,t_i,k+1)$-homogeneous at $x_i$, so that in particular there is a $(k+1)$-homogeneous map $\widetilde{h}_i$ so that
    \[
    \sup_{B_{t_i}(x_i)}d(u_i,\widetilde{h}_i^{x_i})\leq\frac\eta3\left(\frac{I_{\phi,u_i}(0,t_i)}{t_i^{n-1}}\right)^{\frac{1}{2}}.
    \]

    We apply Lemma \ref{lem:hom-grad-bound} to obtain a new sequence of $(k+1)$-homogeneous maps $h_i$ with
    \[
    \sup_{B_{t_i}(x_i)}d(u_i,{h}_i^{x_i})\leq\frac{2\eta}3\left(\frac{I_{\phi,u_i}(0,t_i)}{t_i^{n-1}}\right)^{\frac{1}{2}}
    \]
    and
    \[
    \sup_{B_{t_i}(x_i)}|\nabla h_i^{x_i}|\leq C(n,\Lambda)
    \]
    where we have used the computations following Lemma \ref{lem:hom-ord-bound} to bound $|\nabla h_i|$ in terms of $n,\Lambda$.  Here, it is useful to recall that $I_\phi(x_i,t_i)$ is uniformly bounded above and below in terms of $\Lambda,\rho$, by \eqref{eq:height-technical}; this bounds the scaling factor in the rescalings we use to apply Lemma \ref{lem:hom-grad-bound}.  
    
    All of the $t_i\geq2\rho$ by assumption, and we can of course find a subsequence of the $x_i$ converging to some $x$.  By Arzela-Ascoli, the gradient bounds allow us to find a subsequence of the $h_i$ which converges uniformly on $B_{2\rho}(x)$ to some map $h$; due to this uniform convergence it is not hard to see that $h$ is $(k+1)$-homogeneous.  By homogeneity, this uniform convergence can in fact be extended to $B_3(x)$.

    Using the above and applying Lemma \ref{lem:ben6.3}, we can find a subsequence so that:
    \begin{itemize}
        \item $u_i \to u$ uniformly in $C^0(B_2(0))$ and strongly in $W^{1,2}(B_2(0))$, where $u$ is a harmonic map.
         \item $V^i \to V$, a $k$-dimensional affine subspace and $V$ is $\rho$-effectively spanned by $\{y_0,\dots, y_k\}\subset \overline{B_{1}(0)}$, where $y_j= \lim_{i \to \infty} y_j^i$, noting that $W_1^8(y_j)=0$ for each $j=0,\dots,k$.
         \item $x^i\to x\in V\cap\overline{B_1(0)}$, $W_1^8(x)=0$.
         \item $h_i\to h$ uniformly in $C^0(B_2(0))$, where $h$ is a $(k+1)$-homogeneous map.
         \item $t_i\to t\in[2\rho,2]$, and moreover
         \[
             \sup_{B_{t}(x)}d(u,h^{x})\leq\frac{2\eta}3\left(\frac{I_{u}(0,t)}{t^{n-1}}\right)^{\frac{1}{2}}.
         \]
    \end{itemize}

    We observe that since $u$ is homogeneous about all of the $y_j$, it is homogeneous about each of them and it is invariant with respect to $V$, their span.  In particular, since $x\in V$, we immediately see that for each $y_j$,
    \[
    \sup_{B_{t}(y_j)}d(u,h^{y_j})\leq\frac{2\eta}3\left(\frac{I_{u}(0,t)}{t^{n-1}}\right)^{\frac{1}{2}}.    
    \]
    by the invariance of $u$.  For large enough $i$, this contradicts that $y_j^i\in\mathcal{S}^k_{0,\eta,1/i}(u_i)$.
\end{proof}

\begin{lemma}\label{lem:move-inhom-large}
  Let $u:B_{8}(0) \to X_C$ be a harmonic map into a conical $F$-connected complex with $0_X \in u(B_1(0))$ satisfying \eqref{eq:Icond}. Given $0<\rho<2$, and $0<\eta<\eta_0(n)$, where $\eta_0(n)$ is chosen as in Lemma \ref{lem:hom-ord-bound}, there exists $\delta=\delta(n,\rho, \eta,X_C,\Lambda)>0$ such that if $W_{\rho}^{4}(0)<\delta$ and there exists $y \in B_{1}(0)$ such that
  \begin{itemize}
      \item $W_{\rho}^{4}(y)<\delta$
      \item $u$ is not $(\eta, 1, k+1)$-homogeneous about $y$.
  \end{itemize}Then $u$ is not $(\eta/3, 1, k+1)$-homogeneous about $0$.
\end{lemma}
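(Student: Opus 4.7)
The plan is to argue by contradiction, in analogy with Lemma \ref{lem:move-inhom}. The crucial new ingredient is that the pinching hypotheses at both $0$ and $y$ will yield, after passing to a limit, invariance of the limit map along the direction $y$. This invariance will let me \emph{translate} a $(k+1)$-homogeneous approximation centered at $0$ into one centered at $y$, with only a modest loss in the comparison constant ($\eta/3 \rightsquigarrow 2\eta/3$), which will propagate back to the finite-$i$ maps to produce a contradiction.

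Concretely, suppose no such $\delta$ exists. Then I can extract a sequence of harmonic maps $u_i:B_8(0)\to X_C$ satisfying \eqref{eq:Icond}, points $y_i\in B_1(0)$ with $W_\rho^4(0,u_i),W_\rho^4(y_i,u_i)<1/i$, and $(k+1)$-homogeneous maps $h_i$ witnessing
\[
\sup_{B_1(0)}d(u_i,h_i)\leq \tfrac{\eta}{3}\left(\tfrac{I_{\phi,u_i}(0,1)}{1}\right)^{1/2},
\]
while $u_i$ is not $(\eta,1,k+1)$-homogeneous at $y_i$. After rescaling the target so that $I_{\phi,u_i}(0,1)=1$, I would invoke Lemma \ref{lem:hom-grad-bound} to replace $h_i$ by a $(k+1)$-homogeneous $\widehat h_i$ with $\sup_{B_1(0)}d(u_i,\widehat h_i)\leq 2\eta/3$ and a uniform $C^1$ bound, and Lemma \ref{lem:hom-ord-bound} to see that the degrees of $\widehat h_i$ remain uniformly bounded. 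Lemma \ref{lem:ben6.3} combined with Arzela--Ascoli then yields a subsequential limit: $u_i\to u$ uniformly on $B_2(0)$, $\widehat h_i\to h$ uniformly (with $h$ still $(k+1)$-homogeneous), and $y_i\to y\in \overline{B_1(0)}$, with the approximation $\sup_{B_1(0)}d(u,h)\leq 2\eta/3$ passing to the limit.

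In the limit, the pinching becomes $W_\rho^4(0,u)=W_\rho^4(y,u)=0$, so Lemma \ref{lem:ben5.1} implies $u$ is homogeneous about both $0$ and $y$ on $B_2(0)$. If $y\neq 0$, Lemma \ref{lem:ben7.4} forces the two orders to coincide and yields invariance $u(z+\lambda y)=u(z)$ on $B_2(0)$. Taking $\lambda=-1$ gives $u(z)=u(z-y)$ for $z\in B_1(y)\subset B_2(0)$, whence
\[
\sup_{B_1(y)}d(u(z),h(z-y))=\sup_{B_1(y)}d(u(z-y),h(z-y))\leq \tfrac{2\eta}{3}.
\]
Writing $h^y(z):=h(z-y)$ with $h$ itself still $(k+1)$-homogeneous, this says $u$ is $(2\eta/3,1,k+1)$-homogeneous at $y$; the same invariance gives $I_{\phi,u}(y,1)=I_{\phi,u}(0,1)=1$, so the normalization is preserved (the case $y=0$ is immediate). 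By the uniform convergence of $u_i,\widehat h_i,y_i$ and the continuity of $I_{\phi,u_i}(y_i,1)\to 1$, for large $i$ the map $u_i$ is then $(\eta,1,k+1)$-homogeneous at $y_i$, contradicting the assumption. The main obstacle is precisely this translation step: the approximation is only assumed near $0$, and transporting it to $y$ requires the full invariance from Lemma \ref{lem:ben7.4}, which is only available in the limit. Accordingly, care is needed to verify that $I_{\phi,u_i}(y_i,1)$ stays uniformly bounded below (so the strict slack between $2\eta/3$ and $\eta$ survives) and that the degree of $\widehat h_i$ passes nontrivially to the limit so that the limiting $h$ actually serves as the desired $(k+1)$-homogeneous competitor.
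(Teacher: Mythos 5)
Your proposal is correct and follows essentially the same route as the paper's proof: contradiction, normalization, replacing the comparison maps via Lemmas \ref{lem:hom-grad-bound} and \ref{lem:hom-ord-bound} to get uniform gradient and degree bounds, passing to limits via Lemma \ref{lem:ben6.3} and Arzela--Ascoli, and then using Lemmas \ref{lem:ben5.1} and \ref{lem:ben7.4} to transport the $(k+1)$-homogeneous approximation from $0$ to $y$ (handling $y=0$ trivially) before pulling the contradiction back to large $i$. The points you flag as needing care---the invariance giving $I_{\phi,u}(y,1)=I_{\phi,u}(0,1)$ and the limit $h$ remaining a genuine $(k+1)$-homogeneous competitor---are handled exactly as you describe in the paper's argument.
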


\begin{proof}
        As usual, we proceed by contradiction. Suppose no such $\delta>0$ exists. Then there exists a sequence $u_i:B_8(0) \to X_C$ of harmonic maps with $I_{\phi,u_i}(0,8)=1$ such that
    \begin{itemize}
        \item $W_{\rho}^4(0,u_i)<\frac 1i$
        \item there exists a sequence $y_i \in B_{1}(0)$ such that $W_{\rho}^4(y_i,u_i)<\frac 1i$
        \item $u_i$ is not $(\eta, 1, k+1)$-homogeneous about $y_i$
        \item $u_i$ is $(\eta/3, 1, k+1)$-homogeneous about $0$ with comparison map $\widetilde{h}_i$.
    \end{itemize}
    The sequence $u_i \to u$ in $C^0\cap W^{1,2}(B_4(0))$ where $u$ is a nonconstant harmonic map, $y_i \to y \in \overline{B_{1}(0)}$, and $W_{\rho}^4(0,u)=W_{\rho}^4(y,u)=0$. As in the proof of Lemma \ref{lem:move-inhom}, we can use Lemmas \ref{lem:hom-grad-bound} \ref{lem:hom-ord-bound} to replace the sequence $\widetilde{h}_i$ with a sequence $h_i$ converging (up to a subsequence) to $h$, a $(k+1)$-homogeneous map with
    \begin{equation}\label{eq:sym-0}
    \sup_{B_{1}(0)}d(u,h)\leq\frac{2\eta}3\left(I_{\phi,u}(0,1)\right)^{\frac12}.
    \end{equation}

    By Lemma \ref{lem:ben5.1}, $u$ is homogeneous about both $0$ and $y$ and in particular if $y\neq0$, by Lemma \ref{lem:ben7.4} is invariant along the line between them.  Hence on $B_{1}(y)$, by \eqref{eq:sym-0} we have
    \begin{equation}\label{eq:move-inhom-y-symm}
    \sup_{B_{1}(y)}d(u,h^y)\leq \frac{2\eta}3\left(I_{\phi,u}(0,1)\right)^{\frac12}
    \end{equation}
    where $h^y(z):=h(z-y)$ as in Definition \ref{def:quant-hom}.  However, because $u_i\to u$ uniformly on $B_{3}(y)$, and $y_i\to y$, for large enough $i$ this contradicts the fact that $u_i$ is not $(\eta,1,k+1)$-homogeneous about $y_i$.
    
    On the other hand, if $y=0$, we immediately obtain \eqref{eq:move-inhom-y-symm} from \eqref{eq:sym-0}, which is still a contradiction.
    
\end{proof}

\section{The Mean Flatness}\label{sec:meanflat}

Thus far, everything we have seen involves the frequency or the frequency pinching; it all relates to the monotone quantity for harmonic maps into $F$-connected complexes. 
 However, the tools we will use to prove the main theorems require bounds on a different quantity, namely the mean flatness of certain measures, sometimes called the Jones' $\beta_2$ numbers.

\begin{definition}
Let $\mu$ be a Radon measure on $\R^n$ and $k\in\{0,1,\dots,n-1\}$.  For $x\in\R^n$ and $r>0$, we define the {\bf $k^{\text{th}}$ mean flatness} of $\mu$ in the ball $B_r(x)$ to be
\[
D_\mu^k(x,r):=\inf_Lr^{-k-2}\int_{B_r(x)}\text{dist}(y,L)^2\phantom{i}d\mu(y),
\]
where the infimum is taken over all affine $k$-planes $L$.
\end{definition}

In this section, we prove that, as long as $u$ is not $(\eta,r,k+1)$-homogeneous at $x$, bounding the frequency pinching in $B_r(x)$ allows us to bound the $k^{\text{th}}$ mean flatness.  In particular, this lets us bound the mean flatness of measures supported near the set $\mathcal{S}^k_{0,\eta,r}$.

\begin{lemma}\label{lem:mean-flat}
Suppose that $u:B_{4}(0)\subset\R^n \to X_C$ is a harmonic map satisfying \eqref{eq:Icond} and $X_C$ is a conical $F$-connected  complex. Assume further that $0_X \in u(B_2(0))$. For any $\eta>0$, $\rho\in(0,1)$, there are constants $C=C(n, \rho, \eta, X_C, \Lambda),\delta=\delta(n, \rho, \eta, X_C, \Lambda)>0$ for which the following holds:  

Suppose that $\mu$ is a finite nonnegative Radon measure and $u$ is not $(\eta, r,k+1)$-homogeneous at $x \in B_1(0)$ where $r \in (0,1)$.  If $W_{\rho r}^{2r}(x)<\delta$, then
\[
D_\mu^{k}(x,r/8)\leq\frac{C}{r^{k}}\int_{B_{r/8}(x)}W_{r/8}^{4r}(y)\phantom{i}d\mu(y).
\]
\end{lemma}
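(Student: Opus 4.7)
I plan a standard compactness-and-contradiction argument in the style of \cite{nv17}, following the adaptations in \cite{dees,dmsv}. After translating $x \to 0$, dilating $r \to 1$, and normalizing the target via $I_{\phi,u_i}(0,4) = 1$, negate the conclusion: there exist harmonic maps $u_i : B_4(0) \to X_C$ satisfying the standing hypotheses, and Radon measures $\mu_i$, with $W_\rho^2(0, u_i) < 1/i$, $u_i$ not $(\eta, 1, k+1)$-homogeneous at $0$, and
\[
D_{\mu_i}^k(0, 1/8) > i \int_{B_{1/8}(0)} W_{1/8}^4(y, u_i) \, d\mu_i(y).
\]
By Lemma \ref{lem:ben6.3}, extract a subsequential limit $u_i \to u$ in $C^0 \cap W^{1,2}(B_2(0))$, with $u$ a nonconstant harmonic map; the pinching assumption at $0$ combined with Lemma \ref{lem:ben5.1} forces $u$ to be homogeneous about $0$.

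The central step is extracting $k+2$ points in $\mathrm{supp}(\mu_i)$ which both effectively span a $(k+1)$-plane and carry small pinching. The contradiction assumption together with the trivial bound $D_{\mu_i}^k(0,1/8) \leq C(n)\mu_i(B_{1/8}(0))$ yields
\[
\int_{B_{1/8}(0)} W_{1/8}^4(y, u_i) \, d\mu_i(y) \leq \frac{C(n)}{i}\mu_i(B_{1/8}(0)),
\]
so Chebyshev's inequality with threshold $i^{-1/2}$ guarantees that the set $E_i := \{y \in B_{1/8}(0) : W_{1/8}^4(y, u_i) < i^{-1/2}\}$ carries all but a vanishing fraction of the $\mu_i$-mass. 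A best-approximating-plane / covariance-matrix argument applied to the restriction of $\mu_i$ to $E_i$ (in the spirit of \cite{nv17}) then produces points $y_0^i, \ldots, y_{k+1}^i \in E_i$ which $\rho_i$-effectively span a $(k+1)$-plane, with $\rho_i$ bounded below in terms of $(D_{\mu_i}^k / \mu_i(B_{1/8}(0)))^{1/2}$, and which by construction satisfy $W_{1/8}^4(y_j^i, u_i) \to 0$.

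After a further subsequence, $y_j^i \to y_j$, and uniform convergence $u_i \to u$ upgrades this to $W_{1/8}^4(y_j, u) = 0$. By Lemma \ref{lem:ben5.1}, $u$ is homogeneous about each $y_j$, and by lower semicontinuity the $y_j$ still effectively span a $(k+1)$-plane. Repeated application of Lemma \ref{lem:ben7.4}, as in the proof of Lemma \ref{lem:cone-split}, shows $u$ is invariant along $\mathrm{span}\{y_j - y_0\}_{j=1}^{k+1}$, hence is $(k+1)$-homogeneous about $y_0$. Combined with homogeneity about $0$, one final application of Lemma \ref{lem:ben7.4} to the pair $(0, y_0)$ gives $(k+1)$-homogeneity about $0$ itself. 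Since $u$ itself then serves as a $(k+1)$-homogeneous comparison map, uniform convergence makes $u_i$ eventually $(\eta/2, 1, k+1)$-homogeneous at $0$, contradicting the hypothesis.

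The main technical obstacle is ensuring the spanning constant $\rho_i$ admits a uniform lower bound, since the covariance bound $\rho_i \gtrsim (D_{\mu_i}^k / \mu_i(B_{1/8}(0)))^{1/2}$ degenerates in the regime where the mass concentrates near a $k$-plane at a faster-than-algebraic rate. This is handled by rescaling each $\mu_i$ by $(D_{\mu_i}^k)^{-1}$ (equivalently, dilating orthogonally to the best $k$-plane) before taking the weak-$*$ limit, so that the normalized measures carry unit mean flatness and the spanning extraction is uniform. Tracking that the pinching smallness survives this rescaling is the most delicate bookkeeping, but is a direct adaptation of the corresponding steps in \cite{nv17,dees}.
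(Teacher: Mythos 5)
Your architecture is not the one the paper uses, and the step you yourself flag as ``the most delicate bookkeeping'' is a genuine gap rather than bookkeeping. The lemma must hold for \emph{every} finite Radon measure $\mu$, including measures concentrated arbitrarily close to a $k$-plane. In that degenerate regime your contradiction sequence has $D^k_{\mu_i}(0,1/8)\to0$ (after normalizing $\mu_i(B_{1/8}(0))=1$, which costs nothing since the inequality is linear in $\mu$), so the spanning constant $\rho_i\sim(D^k_{\mu_i})^{1/2}$ of your extracted points degenerates, the limit points $y_0,\dots,y_{k+1}$ may all lie on a $k$-plane, and Lemma \ref{lem:ben7.4} then yields only $k$-homogeneity of the limit map --- no contradiction. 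Your proposed repair, dilating orthogonally to the best $k$-plane to normalize the flatness, is not available: an anisotropic dilation of the domain does not preserve harmonicity, so the rescaled configuration has no monotone order function and no well-defined pinching $W_s^r$, and Lemmas \ref{lem:ben5.1} and \ref{lem:ben7.4} cannot be applied to any limit of the rescaled objects. This degeneracy is exactly why \cite{nv17,dmsv,dees} do not prove the mean-flatness bound by compactness in the measure.

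What the paper does instead (following \cite[Proposition 5.3]{dmsv} and \cite[Proposition 6.1]{dees}) is a direct, $\mu$-linear estimate. Diagonalize the (unnormalized) second-moment form of $\mu\llcorner B_{r/8}(x)$ about its barycenter, with eigenvalues $\lambda_1\geq\dots\geq\lambda_n$ and eigenvectors $v_1,\dots,v_n$, so that $D^k_\mu(x,r/8)\leq C(n)r^{-k-2}\lambda_{k+1}$. The eigenvector identity, the vanishing of the first moment about the barycenter, and the monotonicity formula give the schematic bound
\begin{equation*}
\lambda_{k+1}\int_{B_{5r/4}(x)\setminus B_{3r/4}(x)}\sum_{j=1}^{k+1}|\partial_{v_j}u|^2\,dz\;\leq\;\sum_{j=1}^{k+1}\lambda_j\int_{B_{5r/4}(x)\setminus B_{3r/4}(x)}|\partial_{v_j}u|^2\,dz\;\leq\;Cr^{n}\int_{B_{r/8}(x)}W_{r/8}^{4r}(y)\,d\mu(y),
\end{equation*}
valid for arbitrary $\mu$ with no compactness. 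One then divides by the lower bound $r^{2-n}\int_{B_{5r/4}(x)\setminus B_{3r/4}(x)}\sum_{j=1}^{k+1}|\partial_{v_j}u|^2\geq\delta$, which is Lemma \ref{lemma:Flat Directions}; that lemma is the sole place where the hypotheses that $u$ is not $(\eta,r,k+1)$-homogeneous at $x$ and that $W_{\rho r}^{2r}(x)<\delta$ enter, and it is proved by a compactness argument in the maps $u_i$ (via Lemmas \ref{lem:ben6.3}, \ref{lem:ben5.1} and \ref{lemma:Unique Extension}) alone, with no measure present. The compactness reasoning in your last two paragraphs is sound in spirit, but it must be decoupled from $\mu_i$ and run at the level of the maps, as in Lemma \ref{lemma:Flat Directions}, rather than jointly with the measures.
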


To prove this, we will need two auxiliary lemmas.

\begin{lemma}\label{lemma:Unique Extension}
Let $x_1,\dots,x_n$ be orthonormal coordinates for $\R^n$ and let $u:\Omega\to X$ be a harmonic map from a convex domain into an $F$-connected complex.  Suppose that $B_r(x_0) \subset \Omega$ and $u|B_r(x_0)$ is a function depending only on $x_1,\dots,x_j$ for $j<n$. Then $u$ is a function of only $x_1,\dots,x_j$ on all of $\Omega$.
\end{lemma}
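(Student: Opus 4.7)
The plan is to combine translation invariance of harmonicity with unique continuation for harmonic maps into the CAT(0) target. Fix a direction $e_i$ with $i\in\{j+1,\dots,n\}$ and, for $|t|$ small enough, set $\Omega_t:=\Omega\cap(\Omega-te_i)$, which is convex (intersection of convex sets) and hence connected. Define $u_t:\Omega-te_i\to X$ by $u_t(y):=u(y+te_i)$; this is harmonic on its domain since $u$ is harmonic and the translation is an isometry of the domain. By hypothesis, $u_t(y)=u(y)$ on the open set $B_{r-|t|}(x_0)\subset\Omega_t$. So $u$ and $u_t$ are two harmonic maps from the connected open set $\Omega_t$ to $X$ agreeing on a nonempty open subset.

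Applying unique continuation for harmonic maps into an $F$-connected complex, one concludes $u\equiv u_t$ on all of $\Omega_t$; that is, $u(y+te_i)=u(y)$ whenever both $y$ and $y+te_i$ lie in $\Omega$. Doing this for each direction $e_{j+1},\dots,e_n$ and for all admissible $t$, convexity of $\Omega$ lets us connect any two points of $\Omega$ that differ only in the $x_{j+1},\dots,x_n$ coordinates by a finite sequence of such small translations staying inside $\Omega$, along which $u$ is unchanged. Hence $u$ depends only on $x_1,\dots,x_j$ throughout $\Omega$.

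The key step---and the main obstacle---is unique continuation. One concrete way is to study the non-negative subharmonic function $w(y):=d^2(u(y),u_t(y))$ on $\Omega_t$: from the harmonicity of $u$ and $u_t$ into a CAT(0) target, $w$ satisfies an improved subelliptic inequality of the form $\Delta w \geq 2|\nabla d(u,u_t)|^2$, strong enough to deduce from $w\equiv 0$ on an open set that $w\equiv 0$ on the connected open set $\Omega_t$ (e.g.\ via an Aronszajn / Carleman-type argument, or via Almgren frequency monotonicity applied to $w$). Alternatively, one can reduce to the Euclidean setting: on the regular set $\mathcal{R}(u)\cap\mathcal{R}(u_t)$, both maps are locally Euclidean-valued harmonic and hence real-analytic; since $\mathcal{S}(u)\cup\mathcal{S}(u_t)$ has codimension at least $2$, its complement is connected in the convex $\Omega_t$, and analyticity propagates the identity $u=u_t$ from the open set where it holds to all of $\mathcal{R}(u)\cap\mathcal{R}(u_t)$, then by continuity of $u,u_t$ to all of $\Omega_t$.
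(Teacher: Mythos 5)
Your reduction to the statement ``$u\equiv u_t$ on $\Omega_t$'' is fine as far as it goes, but the tool you invoke to prove it --- unique continuation for pairs of harmonic maps into an $F$-connected complex --- is \emph{false}, and your first proposed mechanism for it cannot be repaired. Take $X$ to be the tripod with rays $A,B,C$ emanating from $0_Y$, and on $B_1(0)\subset\R^2$ let $u$ (resp.\ $v$) be the composition of the harmonic function $h(x_1,x_2)=x_1$ with the isometric embedding of $\R$ onto $A\cup B$ (resp.\ $A\cup C$) sending $[0,\infty)$ to $A$. Both are energy minimizers (project any competitor onto the closed convex subtree $A\cup B$, resp.\ $A\cup C$, which is $1$-Lipschitz and fixes the trace), they agree on the open half-disc $\{x_1>0\}$, yet they differ on $\{x_1<0\}$. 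Moreover $d^2(u,v)=4\max(-x_1,0)^2$ satisfies $\Delta d^2(u,v)=2|\nabla d(u,v)|^2$ with equality, so the improved subharmonicity inequality is fully consistent with failure of unique continuation; an Aronszajn/Carleman argument would require a two-sided bound of the form $|\Delta w|\lesssim|w|+|\nabla w|$, which is not available, and no frequency monotonicity for $d(u,v)$ is known. So the ``key step'' of your main route is not a provable lemma but a false statement, and proving it in the special case $v=u_t$ is essentially equivalent to the lemma itself.

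Your second alternative is the right idea --- the paper gives no argument of its own, deferring to the proof of \cite[Lemma 6.2]{dees}, and the viable route is indeed through the regular set and the codimension-two bound on $\mathcal S(u)$ --- but as phrased it still has a gap: at a point of $\mathcal R(u)\cap\mathcal R(u_t)$ where you do not yet know $u=u_t$, the two maps may take values in \emph{different} flats of $X$, so ``$u-u_t$'' is not a difference of analytic functions in a common Euclidean chart and $d(u,u_t)$ is not obviously analytic; this blocks the open-and-closed propagation of the identity. The fix is to drop the translate altogether and instead propagate the vanishing of the globally defined directional energy densities $|\partial_{x_i}u|^2$, $i>j$: these are real-analytic in each local Euclidean chart on $\mathcal R(u)$, they vanish on the nonempty open set $B_r(x_0)\cap\mathcal R(u)$, and the set where they all vanish locally is open and relatively closed in $\mathcal R(u)$, which is connected because $\mathcal S(u)$ has Hausdorff codimension two and cannot disconnect the convex domain $\Omega$; hence they vanish a.e.\ on $\Omega$. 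Since $u$ is locally Lipschitz and $\Omega$ is convex, this gives constancy of $u$ along every segment of $\Omega$ parallel to $\mathrm{span}(e_{j+1},\dots,e_n)$ (note that your chain of axis-parallel small translations between two such points may exit $\Omega$; one should integrate along the straight segment joining them instead).
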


This lemma is reminiscent of the classical unique continuation of harmonic maps, albeit in a very weak sense.  We do not outline the proof here as it follows nearly identically the proof of \cite[Lemma 6.2]{dees}, making only minor modifications to account for the fact that $u$ is now permitted to depend on $j$ variables.  We primarily need this result to prove our second auxiliary lemma.

\begin{lemma}\label{lemma:Flat Directions}
Suppose that $u:B_{4}(0)\subset\R^n \to X_C$ is a harmonic map satisfying \eqref{eq:Icond}, $X_C$ is a conical $F$-connected  complex, and $0_X \in u(B_2(0))$.  Then for any $\eta>0$, $\rho\in(0,1)$, there is a constant $\delta=\delta(n,\rho, \eta, X_C, \Lambda)>0$ so that the following holds. 

Suppose that $B_{r}(x)$ is a ball such that $u$ is not $(\eta, r,k+1)$-homogeneous at $x$, and $W_{\rho r}^{2r}(x)<\delta$.  Then for any orthonormal set $\{v_1,\dots,v_{k+1}\}$,
\[
r^{2-n}\int_{B_{5r/4}(x)\setminus B_{3r/4}(x)}\sum_{j=1}^{k+1}|\partial_{v_j}u|^2\geq\delta .
\]
\end{lemma}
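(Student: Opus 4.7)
The plan is to argue by contradiction and compactness, in parallel with the cone-splitting arguments of the previous section. Suppose no such $\delta$ exists. After translating and rescaling in the domain (since all quantities in the statement are scale and translation invariant), and normalizing the target metric so that $I_{\phi,u_i}(0,4)=1$ (a choice that preserves the $(\eta,r,k+1)$-homogeneity condition since the defining inequality \eqref{eq:khom} is invariant under target rescaling), we obtain a sequence of harmonic maps $u_i:B_4(0)\to X_C$ satisfying \eqref{eq:Icond} with $0_X\in u_i(B_2(0))$, orthonormal frames $\{v_1^i,\dots,v_{k+1}^i\}$, and pinching $W_\rho^2(0,u_i)<1/i$, such that $u_i$ is not $(\eta,1,k+1)$-homogeneous at $0$ yet
\[
\int_{B_{5/4}(0)\setminus B_{3/4}(0)}\sum_{j=1}^{k+1}|\partial_{v_j^i}u_i|^2<\frac{1}{i}.
\]

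Next, apply Lemma \ref{lem:ben6.3} to extract a subsequence converging uniformly and strongly in $W^{1,2}(B_2(0))$ to a nonconstant harmonic map $u$, with $v_j^i\to v_j$ forming an orthonormal basis of a $(k+1)$-dimensional subspace $V$. Continuity of $\Ord_\phi$ under this convergence forces $\Ord_{\phi,u}(0,\rho)=\Ord_{\phi,u}(0,2)$, so Lemma \ref{lem:ben5.1} implies that $u$ is homogeneous about $0$ on $B_2(0)$. Strong $W^{1,2}$ convergence, combined with the vanishing of the directional energies, yields $\int_{B_{5/4}(0)\setminus B_{3/4}(0)}|\partial_{v_j}u|^2=0$ for each $j$, so on any small convex ball $B\subset B_{5/4}(0)\setminus B_{3/4}(0)$ the Lipschitz map $u$ is independent of each coordinate direction in $V$. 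Lemma \ref{lemma:Unique Extension}, applied with ambient convex domain $\Omega=B_2(0)$, then propagates this $V$-invariance to all of $B_2(0)$. Together with homogeneity about $0$, this exhibits $u$ as a $(k+1)$-homogeneous map about $0$.

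Finally, we derive the contradiction. By the monotonicity of $t^{1-n}I_\phi(0,t)$, the identity \eqref{eq:height-technical}, and the bound $\Ord_{\phi,u_i}(0,t)\leq\Lambda$, we obtain a uniform lower bound $I_{\phi,u_i}(0,1)\geq c(n,\Lambda)>0$. Using that $u_i\to u$ uniformly on $B_1(0)$, for all large $i$
\[
\sup_{B_1(0)}d(u_i,u)<\eta\left(\frac{I_{\phi,u_i}(0,1)}{1^{n-1}}\right)^{1/2},
\]
so that $u_i$ is $(\eta,1,k+1)$-homogeneous at $0$ with comparison map $u$. This contradicts our hypothesis.

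The main obstacle I anticipate is the step extending $V$-invariance from the annulus $B_{5/4}(0)\setminus B_{3/4}(0)$ to all of $B_2(0)$: the annulus is not convex, so Lemma \ref{lemma:Unique Extension} cannot be applied to it directly. The remedy is to first establish $V$-invariance on a small convex ball contained in the annulus (which follows from the vanishing of $\int|\partial_{v_j}u|^2$ and local Lipschitz regularity), and then invoke Lemma \ref{lemma:Unique Extension} within the convex domain $B_2(0)$. A secondary concern is ensuring that the directional energies $\sum|\partial_{v_j^i}u_i|^2$ pass cleanly to the limit despite the moving frames, but the frames lie in the compact Stiefel manifold, so together with strong $W^{1,2}$ convergence of $u_i$, this is a routine bilinear argument.
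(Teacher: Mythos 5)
Your proposal is correct and follows essentially the same route as the paper: contradiction plus compactness via Lemma \ref{lem:ben6.3}, homogeneity of the limit from Lemma \ref{lem:ben5.1}, vanishing directional energy on the annulus propagated to $B_2(0)$ via Lemma \ref{lemma:Unique Extension} applied on small convex balls, and then a contradiction with the failure of $(\eta,1,k+1)$-homogeneity using uniform convergence. Your explicit handling of the non-convexity of the annulus and of the lower bound on $I_{\phi,u_i}(0,1)$ in the final step are both points the paper treats more tersely but in the same spirit.
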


\begin{proof}
We take $x=0$ by translation and $r=1$ by scale-invariance.  Fix $\eta>0$ and $\rho\in(0,1)$, and suppose that the claimed inequality does not hold for any $\delta>0$.

Then, for each $i$, there is a map $u_i$ with $I_{\phi,u_i}(0,1)=1$ which is not $(\eta,1,k+1)$-homogeneous at $x$, and so that $W_{\rho}^{2}(0,u_i)<\frac{1}{i}$.  Additionally, there exists an orthonormal set $\{v^i_{1},\dots,v^i_{k+1}\}$ so that
\[
\int_{B_{5/4}(0)\setminus B_{3/4}(0)}\sum_{j=1}^{k+1}|\partial_{v^i_{j}}u_i|^2<\frac{1}{i}.
\]

Applying Lemma \ref{lem:ben6.3}, we pass to a subsequence converging to some map $u$, which is homogeneous with respect to $0$ by Lemma \ref{lem:ben5.1}.   Taking convergent subsequences of the $v^i_j$, we find an orthonormal set $\{v_{1},\dots,v_{k+1}\}$ so that
\[
\int_{B_{5/4}(0)\setminus B_{3/4}(0)}\sum_{j=1}^{k+1}|\partial_{v_{j}}u|^2=0.
\]

This, of course, means that for any $B_\rho(y)\subset B_{5/4}(0)\setminus B_{3/4}(0)$, $u|_{B_\rho(y)}$ does not depend on the $v_1,\dots,v_{k+1}$ directions.  By Lemma \ref{lemma:Unique Extension}, $u$ does not depend on the $v_1,\dots,v_{k+1}$ directions anywhere on $B_2(0)$.

In particular, as a map homogeneous about $0$ which is invariant with respect to the subspace $\text{span}\{v_1,\dots,v_{k+1}\}$, $u$ is $(k+1)$-homogeneous at $0$.  This is in contradiction to the fact that the $u_i$ are not $(\eta,1,k+1)$-homogeneous at $0$, since the $u_i$ converge uniformly to $u$.
\end{proof}

The proof of Lemma \ref{lem:mean-flat} follows arguments similar to the proofs of \cite[Proposition 5.3]{dmsv} and \cite[Proposition 6.1]{dees}.  It is in this proof that we require the use of the smoothed order. The only alteration to the aforementioned proofs is that when we need to prove a bound of the form
\[
r^{2-n}\int_{B_{5r/4}(x)\setminus B_{3r/4}(x)}\sum_{i=1}^{k+1}|\partial_{v_i}u|^2\geq\delta 
\]
we invoke Lemma \ref{lemma:Flat Directions} above.  This is the one place that the hypothesis $u$ is not $(\eta,r,k+1)$-homogeneous at $x$ is used, and the one place where the hypothesis $W_{\rho r}^{2r}(x)<\delta$ is used.  In \cite{dmsv,dees} there is no need for these additional hypotheses because in those papers they are dealing with the ``top" $(n-2)$-stratum only---in their settings, a nonconstant harmonic map is {\em never} $(n-1)$ homogeneous, so the proof is simpler in those contexts.

\section{Proofs of Theorems \ref{thm:k-minsk-local} and \ref{thm:k-rect-local}}\label{sec:sketchy-proof}

The main theorems can now be proved by following the arguments of \cite{nv17} and \cite{dmsv} essentially identically. We state the following covering theorem and defer the proof to the appendix. The result follows as in the proofs of {\cite[Proposition 7.2]{dmsv}} and {\cite[Lemma 55]{nv18}} using our cone splitting results from Section \ref{sec:conesplitting}. 

\begin{theorem}\label{thm:k-2nd-cover}
    Let $u:B_8(0) \to X_C$ where $X_C$ is a conical $F$-connected complex and $u$ satisfies \eqref{eq:Icond}. 
    Presume $0_X \in u(B_2(0))$, $x\in B_{1}(0)$, and $0<s<S\leq\frac18$.  Given  $k \in \{0, \dots, n-1\}$ and $0<\eta<\eta_0(n)$, where $\eta_0(n)$ is chosen as in Lemma \ref{lem:hom-ord-bound}, there exist constants $\delta=\delta(n,\eta,X_C,\Lambda)>0$ and $C=C(n)>0$ so that the following holds. Suppose that $D\subseteq\mathcal{S}^k_{0,\eta,\delta s}\cap B_S(x)$ is any subset and set $M=\sup_{y\in D}\Ord_\phi(y,8S)$. 
    
    There is a finite decomposition of $D$ into sets $D_x$, a finite set $\mathcal C\subseteq B_2(0)$, and a collection of balls $\{B_{s_x}(x)\}_{x \in \mathcal C}$ with $s_x\geq s$, so that the following holds:
    \begin{enumerate}
        \item $D_x\subseteq B_{s_x}(x)$,
        \item $\sum s_x^k\leq CS^k$, and 
        \item for each $x\in \mathcal C$, either $s_x=s$ or for all $y\in D_x$,
        \begin{equation}\label{eq:ord-drop}
        \Ord_\phi(y,8s_x)\leq M-\delta.
        \end{equation}
    \end{enumerate}
\end{theorem}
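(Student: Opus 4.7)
My plan is to prove Theorem \ref{thm:k-2nd-cover} via a stopping-time argument in the style of Naber--Valtorta, following closely the template of \cite[Proposition 7.2]{dmsv} and \cite[Lemma 55]{nv18}, but using the cone-splitting results from Section \ref{sec:conesplitting} and the mean flatness estimate from Lemma \ref{lem:mean-flat} adapted to our smoothed order setting. The rough picture is: starting with $D \subseteq \mathcal{S}^k_{0,\eta,\delta s}\cap B_S(x)$, I iteratively refine a cover; at each stage either the covering ball is small enough to invoke the ``order drop by $\delta$'' clause (condition (3)) and we stop there, or else the sup of $\Ord_\phi(\cdot, 8\cdot)$ on $D$ restricted to the ball is still close to $M$ and we must pass to the next smaller scale.

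The key geometric input in the ``non-drop'' case is as follows. Suppose on a ball $B_r(z)$ one has $\sup_{y\in D\cap B_r(z)} \Ord_\phi(y,8r) > M-\delta$, i.e.\ the order pinching $W^{8r}_{\rho r}(y)$ is small on most points of interest. By Lemma \ref{lem:cone-split} combined with Lemma \ref{lem:move-inhom}, the set of such pinched points in $D$ cannot $\rho$-effectively span a $(k+1)$-dimensional affine subspace (otherwise we would produce a $(k+1)$-homogeneous approximation, contradicting $D \subseteq \mathcal{S}^k_{0,\eta,\delta s}$). Thus $D\cap B_r(z)$ lies in a $2\rho r$-tubular neighborhood of some affine $k$-plane $V_{z,r}$. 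This is precisely the Reifenberg-type flatness needed.

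To turn this local flatness into the packing bound $\sum s_x^k \leq C S^k$, I would invoke the rectifiable Reifenberg covering machinery of \cite{nv17} as packaged through an ``intermediate covering'' (Proposition \ref{prop:initial-cover}) used iteratively: assign a discrete measure $\mu = \sum \omega_k r_z^k \delta_z$ to the chosen cover at each scale and bound its mean flatness $D^k_\mu(y, r)$ using Lemma \ref{lem:mean-flat}, where the hypothesis ``$u$ is not $(\eta,r,k+1)$-homogeneous at $y$'' is guaranteed by $y\in \mathcal{S}^k_{0,\eta,\delta s}$ so long as we remain above scale $s$. The mean flatness is then controlled by the integrated frequency pinching $\int W_{r/8}^{4r}(y)\,d\mu(y)$, which telescopes against the monotonicity of $\Ord_\phi$ and gives the total summability required by the Reifenberg theorem. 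This produces the packing bound $\sum s_x^k \leq C(n) S^k$ at each refinement level with the constant $C(n)$ independent of the level.

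The induction terminates in finitely many scales because $M$ is uniformly bounded (by Lemma \ref{lem:ben4.2}), so after at most $\lceil M/\delta\rceil$ drops of size $\delta$ every remaining ball must be at the minimum allowed radius $s$, at which point we place it into $\mathcal{C}$ with $s_x=s$. Assigning each $y\in D$ to (say) the closest center $x\in\mathcal{C}$ produces the disjoint decomposition $D = \bigsqcup_{x\in\mathcal{C}} D_x$ with $D_x\subseteq B_{s_x}(x)$, and summing the packing inequalities across the finitely many levels (the sums form a geometric-type series bounded by a constant depending only on $n$) yields condition (2). The main technical obstacle, as in \cite{nv17,dmsv}, is ensuring the Reifenberg constants can be chosen uniformly across all scales of the iteration; this requires fixing $\eta_0(n)$ and $\rho$ depending only on $n$, then choosing $\delta$ small so that the hypotheses of Lemmas \ref{lem:cone-split}, \ref{lem:move-inhom}, and \ref{lem:mean-flat} are simultaneously satisfied with a single $\delta = \delta(n,\eta,X_C,\Lambda)$. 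A secondary nuisance is the bookkeeping separating ``stop'' balls (where (3) holds with an order drop) from balls of radius exactly $s$; this is handled by the standard dichotomy in the Naber--Valtorta covering scheme.
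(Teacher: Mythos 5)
Your overall architecture --- iterate the intermediate covering of Proposition \ref{prop:initial-cover} and control packing via the discrete-measure Reifenberg theorem (Theorem \ref{thm:nv-cover}) with mean flatness bounded by frequency pinching through Lemma \ref{lem:mean-flat} --- is the same as the paper's, but two load-bearing mechanisms are misidentified, and as written the argument does not deliver the constant $C=C(n)$ in condition (2).

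First, the dichotomy. You claim that in the ``non-drop'' case the pinched points cannot $\rho$-effectively span a $(k+1)$-plane and hence $D\cap B_r(z)$ lies in a $2\rho r$-neighborhood of a $k$-plane. That inference is not available: Lemma \ref{lem:cone-split} constrains $\mathcal{S}^k_{0,\eta,\delta}$ only when the pinched set \emph{does} $\rho$-effectively span a $k$-plane. The actual dichotomy (encoded in conclusion (3) of Proposition \ref{prop:initial-cover}) is: either the pinched set spans a $k$-plane, in which case all of $D$ in the ball is close to that plane and the ball is refined along it; or it does not, in which case the pinched set lies within $\rho r$ of a $(k-1)$-dimensional affine subspace. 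In the second case the points \emph{outside} the pinched set are precisely those satisfying the order drop \eqref{eq:ord-drop} --- this, not the smallness of the ball, is the source of condition (3) --- while the pinched set itself is re-covered by $C(n)\rho^{1-k}$ balls of radius $4\rho r$ centered near the $(k-1)$-plane.

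Second, termination and the packing constant. The re-covering of those $(k-1)$-plane neighborhoods multiplies the running packing sum by $C(n)\rho^{1-k}(4\rho)^k=4^kC(n)\rho$, which after fixing $\rho=\rho(n)$ so that this factor is at most $\frac12$ gives genuine geometric decay across refinement levels; the iteration stops once the bad-ball radii, which shrink by the fixed factor $4\rho$ at each level, fall below $s$. Your proposed termination --- running until $\lceil M/\delta\rceil$ order drops of size $\delta$ have accumulated --- is the mechanism of Theorem \ref{thm:k-minsk-local}, which \emph{applies} Theorem \ref{thm:k-2nd-cover} that many times and correspondingly ends with a constant of the form $C(n)^{\lceil M/\delta\rceil}$ depending on $\Lambda,\eta,X_C$. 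If you run that iteration inside the proof of Theorem \ref{thm:k-2nd-cover} itself, with a per-level packing factor $C(n)>1$ and a number of levels governed by $M/\delta$, the resulting constant cannot be $C(n)$ alone, contradicting the statement. Only one order drop per ball is needed here, and the ``geometric-type series bounded by a constant depending only on $n$'' that you invoke must be justified by the $(k-1)$-plane structure described above, which your proposal omits.
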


With the above covering theorem, we prove our first main (local) theorem.

\begin{proof}[Proof of Theorem \ref{thm:k-minsk-local}]
 Choose $\eta_0(n)$ as in Lemma \ref{lem:hom-ord-bound}, so that we can invoke the covering theorem.  We consider the set $D_0=\mathcal{S}^k_{0,\eta}\cap B_{1/8}(0)$ and observe that due to Lemma \ref{lem:ben4.2}, we have that
    \[
    M:=\sup_{y\in D_0}\Ord_\phi(y,1)\leq C(\Lambda).
    \] Our aim is to establish a volume bound on $B_r(D_0)$.

    First, apply Theorem \ref{thm:k-2nd-cover} with $S=\frac 18$, $s=r$, $D=D_0$, and let $\{B_{s_x}(x)\}_{x\in \mathcal C_1}$ be the corresponding cover, with the corresponding $D_x\subseteq D_0$.  We have that
    \[
    \sum_{x \in \mathcal C_1}(s_x)^k\leq C.
    \]
    If $s_x=r$, we stop refining $B_{s_x}(x)$ and we place $x \in \mathcal C_1^g$, where $g$ denotes ``good". Let $\mathcal C_1^b:= \mathcal C_1 \setminus \mathcal C_1^g$ be the set of bad points.  Note that for the bad points, because $s_x>s$, we have that \[
    M_x:=\sup_{y\in D_x}\Ord_\phi(y,8s_x)\leq M-\delta.
    \]
    
    For each $x \in \mathcal C_1^b$, we again apply Theorem \ref{thm:k-2nd-cover} with $S=s_x$, $s=r$, and $D=D_x$ to obtain a covering $\{B_{s_y}(y)\}_{y \in \mathcal C_{2,x}}$ and a corresponding decomposition of $D_x$ into $D_{x,y}$ (which we relabel as $D_y$ at the next step).  
    
    For this application of Theorem \ref{thm:k-2nd-cover}, if $s_y>r$, we in fact have that
    \[
    \sup_{z\in D_{x,y}}\Ord_\phi(z,8s_y)\leq M_x-\delta\leq M-2\delta.
    \]
Let $\mathcal C_2:= \mathcal C_1^g \bigcup_{x \in\mathcal C_1^b} \mathcal C_{2,x}$. Then
    \[
    \sum_{x\in \mathcal C_2}(s_x)^k\leq C\sum_{x \in \mathcal C_1} (s_x)^k\leq C^2.
    \]
As before, define $\mathcal C_2^g:=\{x \in \mathcal C_2: s_x=r\}$ and let $\mathcal C_2^b:=\mathcal C_2 \backslash \mathcal C_2^g$. 

    We continue inductively so that at step $m$, there exists a covering $\{B_{s_x}(x)\}_{x\in \mathcal C_m}$ such that
    \[
    \sum_{x\in \mathcal C_m}(s_x)^k\leq C^m
    \]
    and for each $x\in \mathcal C_m$, either $s_x=r$ or
    \[
    \sup_{y\in D_x}\Ord_\phi(y,8s_x)\leq M-m\delta.
    \]
    If $m\geq\ceil{\frac M\delta}$, the second alternative cannot hold; this happens after a number of steps that is explicitly bounded in terms of $\Lambda,\delta$ and hence depends only on $n,\eta,X_C,\Lambda$.  At the least such $m$, we hence obtain a covering of $D$ by $\{B_r(x)\}_{x\in\mathcal C_m}$, so that
    \[
    \sum_{x \in \mathcal C_m}r^k\leq C^m.
    \]

    To obtain the desired Minkowski bound, we observe that because $D\subseteq\bigcup_{x\in\mathcal C_m}B_r(x)$, we have that $B_r(D)\subseteq\bigcup_{x\in\mathcal C_m}B_{2r}(x)$.  We bound the volume of this set
    \[
    |B_r(D)|\leq\sum_{x\in\mathcal C_m}\omega_n2^nr^n=\omega_n2^nr^{n-k}\sum_{x\in\mathcal C_m}r^k\leq\omega_n2^nC^mr^{n-k}=C'r^{n-k}
    \]
    where $C'=\omega_n2^nC^m$. It follows that, as desired, $C'$ depends only on $n,\eta,X_C,\Lambda$.
\end{proof}

To prove the rectifiability result, Theorem \ref{thm:k-rect-local}, we use the following result of \cite{at}.

\begin{theorem}[Appears as {\cite[Corollary 1.3]{at}}]\label{thm:at}
    Let $E\subseteq\R^n$ be an $\mathcal{H}^k$-measurable set with $\mathcal{H}^k(E)<\infty$, and let $\mu:=\mathcal{H}^k\llcorner E$.  The set $E$ is rectifiable if and only if
    \[
    \int_0^1D_\mu^k(x,s)\,\frac{ds}{s}<\infty
    \]
    for $\mu$-a.e. $x$.
\end{theorem}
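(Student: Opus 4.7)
This is the Azzam--Tolsa characterization of $k$-rectifiability via the Dini integrability of the Jones-type mean-flatness numbers, so the proposal separates into the (easy) forward implication and the (substantial) reverse implication. For the forward direction, my plan is to reduce to the case $E \subseteq \Gamma$ for a Lipschitz $k$-graph $\Gamma = \text{graph}(f)$, $f:\R^k \to \R^{n-k}$, using the standard fact that every $k$-rectifiable set can be covered $\mathcal{H}^k$-a.e.\ by countably many such graphs, plus the density behavior of $\mu$ at typical points. A Dorronsoro-type estimate then bounds
\[
\int_0^1 D_\mu^k(x,s)\,\frac{ds}{s} \;\lesssim\; \int_0^1 \frac{1}{s^{k+2}}\int_{B_s(x)\cap\Gamma} \text{dist}(y,L_{x,s})^2\,d\mathcal{H}^k(y)\,\frac{ds}{s}
\]
and the right-hand side is controlled by an $L^2$ square function of $\nabla f$, which is finite $\mathcal{H}^k$-a.e.\ since $\nabla f \in L^\infty$.

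For the reverse direction I would follow a corona decomposition. \emph{Step 1:} pass to an ``Ahlfors-type'' regime via Egoroff/Lusin, restricting to a large subset of $E$ on which the upper and lower $k$-dimensional densities of $\mu$ are comparable. \emph{Step 2:} translate the pointwise Dini condition into a Carleson packing condition---for each threshold $\epsilon>0$, the collection of ``bad'' $\mu$-dyadic cubes (in the Christ/David sense) with $D_\mu^k(x_Q,r_Q)>\epsilon$ has finite Carleson mass. \emph{Step 3:} decompose the dyadic tree into coherent subtrees (coronas) inside which $D_\mu^k$ is uniformly small, the roots of the coronas forming a Carleson family by the packing estimate from Step 2.

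\emph{Step 4:} on each coherent subtree, apply a quantitative rectifiable Reifenberg theorem of the David--Toro or Naber--Valtorta \cite{nv17} type to produce a bi-Lipschitz image of a $k$-plane covering the piece of $E$ captured by that corona. \emph{Step 5:} sum over the countably many coronas, using the Carleson packing to ensure $\mu$-a.e.\ point of $E$ lies in some coherent piece, and conclude $k$-rectifiability. The principal obstacle is Step~4 in the $L^2$ setting: one only controls the tilt between successive best-approximating planes in an $\ell^2$ sense (this is exactly the information $D_\mu^k$ supplies), so the classical $L^\infty$ Reifenberg argument fails and must be replaced by a delicate martingale/stopping-time iteration that extracts an $\ell^2$-summable tilt sequence, orients the approximating planes coherently along the tree, and constructs the Lipschitz parametrization with bounded norm. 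Matching the $\ell^2$ tilt summability against the Carleson packing of the corona roots is the technical crux of the whole argument.
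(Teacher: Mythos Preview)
The paper does not prove this theorem. Theorem~\ref{thm:at} is quoted verbatim from Azzam--Tolsa \cite{at} (``Appears as \cite[Corollary~1.3]{at}'') and is invoked as a black box in the proof of Theorem~\ref{thm:k-rect-local}; no argument for it is given or sketched anywhere in the paper. So there is nothing here to compare your proposal against.

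For what it is worth, your outline is a reasonable high-level summary of the Azzam--Tolsa strategy: the forward direction via Dorronsoro-type bounds on Lipschitz graphs, and the converse via a David--Christ dyadic lattice, a corona/stopping-time decomposition into coherent trees where the $\beta$-numbers are small, and an $L^2$-Reifenberg parametrization on each tree. But none of that is the content of the present paper, which simply cites the result and applies it.
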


\begin{proof}[Proof of Theorem \ref{thm:k-rect-local}]
Choose $\eta_0(n)$ as in Lemma \ref{lem:hom-ord-bound}, so that we can invoke Theorem \ref{thm:k-minsk-local}.  Set $\mu:=\mathcal{H}^k\llcorner\mathcal{S}^k_{0,\eta}(u)$. Suppose first that every pinching $W_s^t$ that appears is small enough to apply Lemma \ref{lem:mean-flat} whenever we need to apply it.  Then we compute for any $B_t(y)\subseteq B_1(0)$
\begin{align*}
\nonumber \int_{B_t(y)}\int_0^tD_\mu^k(z,s)\,\frac{ds}s\,d\mu(z)&\leq C\int_{B_t(y)}\int_0^ts^{-1-k}\int_{B_s(z)}W_s^{32s}(\xi)\,d\mu(\xi)\,ds\,d\mu(z)\\ 
&\leq C\int_0^t s^{-1-k}\int_{B_{s+t}(y)}W_s^{32s}(\xi)\int_{B_s(\xi)}d\mu(z)\,d\mu(\xi)\,ds\\
&\leq C'\int_0^ts^{-1}\int_{B_{s+t}(y)}W_s^{32s}(\xi)\,d\mu(\xi)\,ds\\
\nonumber &\leq C'\int_{B_{2t}(y)}\int_0^t{W_s^{32s}(\xi)}\,\frac{ds}s\,d\mu(\xi).
\end{align*}
The first inequality follows from applying Lemma \ref{lem:mean-flat}, the second from two applications of Fubini, and the third uses the fact that the Minkowski bound and a scaling argument imply that $\mu(B_s(\xi)) \leq C' s^k$.

The inner integral can be computed e.g. by dyadic decomposition cf. \cite{dmsv,dees}; we have (recalling \eqref{def:smooth-ord})
\[
\int_0^tW_s^{32s}(\xi)\frac{ds}{s}\leq 6\log(2)(\Ord_\phi(\xi,32t)-\Ord_\phi(\xi,0))
\]
which is in turn bounded by uniform upper bounds on the order. Note that this bound would immediately imply the desired result.

Now, in practice we are {\em not} always able to apply Lemma \ref{lem:mean-flat}; if $x$ is a point such that $W_{\rho r}^r(x)\geq\delta$, the lemma does not apply.  However, if $x$ is a point such that $\Ord_\phi(x,32)-\Ord_\phi(x,0)<\delta$, clearly any pinchings that appear will be small enough to apply Lemma \ref{lem:mean-flat}.  Hence, if $S_1:=\{x\in\mathcal{S}^k_{0,\eta}:\Ord_\phi(x,32)-\Ord_\phi(x,0)<\delta\}$, the work so far shows that $S_1$ is $k$-rectifiable. 

Now, if we let $S_i:=\{x\in\mathcal{S}^k_{0,\eta}:\Ord_\phi(x,32/i)-\Ord_\phi(x,0)<\delta\}$, by applying the above method and Theorem \ref{thm:at} to the measure $\mu_i:=\mathcal{H}^k\llcorner S_i$, the same reasoning shows that $S_i$ is $k$-rectifiable. 
 Finally, because $\lim_{r\to0}\Ord_\phi(x,r)=\Ord_\phi(x,0)$ for each $x$, the sets $S_i$ form a countable cover of $\mathcal{S}^k_{0,\eta}$. In particular, $\mathcal{S}^k_{0,\eta}$ is $k$-rectifiable.
\end{proof}
\appendix
\section{Standard Covering Arguments}\label{sec:arxivproof}

We still must prove Theorem \ref{thm:k-2nd-cover}.  We first prove an intermediate covering lemma, which relies on the following result.

\begin{theorem}[Appears as {\cite[Theorem 3.4]{nv17}}; cf. {\cite[Theorem 42]{nv18}}]\label{thm:nv-cover}
    There exist $\delta_R(n)>0$ and $C_R(n)$ such that the following holds.  Let $\{B_{r_j/5}(x_j)\}_{x_j\in S}\subseteq B_3(0)$ be a collection of pairwise disjoint balls with $x_j\in B_1(0)$, and let $\mu:=\sum_j\omega_kr_j^k\delta_{x_j}$ be the associated measure.  Assume that for each $B_r(y)\subseteq B_2$ we have
    \[
    \int_{B_r(y)}\left(\int_0^rD_\mu^k(z,s)\frac{ds}{s}\right)d\mu(z)\leq \delta_R^2r^k.
    \]
    Then we have the uniform bound
    \[
    \mu(B_1(0))=\omega_k\sum_{j}r_j^k\leq C_R(n).
    \]
\end{theorem}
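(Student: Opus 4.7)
The plan is to follow the Naber--Valtorta covering scheme, in the form refined for quantitative stratification in \cite{dmsv}, adapted to the $C^0$-pinching framework of this paper. The key auxiliary result is an intermediate single-scale refinement (the Proposition \ref{prop:initial-cover} referenced in the outline): given a ball $B_\sigma(y)$ containing a subset $D' \subseteq \mathcal{S}^k_{0,\eta,\delta s} \cap B_\sigma(y)$, one produces a cover of $D'$ by (i) a ``fine'' collection of balls of radius $\sigma/C_0$ (for a universal dimensional constant $C_0$) with centers in $D'$ and radii summing to at most $C(n)\sigma^k$, plus (ii) a ``coarse'' collection of balls of radius $\geq \sigma/C_0$ on which the order has dropped by at least $\delta$.

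To construct the fine collection, I would first partition the candidate centers into \emph{pinched} points ($W_{\sigma/C_0}^{2\sigma}(z) < \delta$) and \emph{unpinched} points. Unpinched points are absorbed into the coarse collection by monotonicity of $\Ord_\phi$, as they immediately witness an order drop of at least $\delta$. For the pinched points, extract a maximal Vitali subcover $\{B_{\sigma/C_0}(y_j)\}$ and set $\mu := \sum_j \omega_k (\sigma/C_0)^k \delta_{y_j}$. For each $z \in \mathrm{supp}(\mu)$ and scale $r \leq \sigma$, Lemma \ref{lem:mean-flat} applies---using both that $z \in \mathcal{S}^k_{0,\eta,\delta s}$ forces $u$ to fail $(\eta,r,k+1)$-homogeneity and that the per-point pinching is small---to give
\[
D_\mu^k(z, r) \leq \frac{C}{r^k}\int_{B_r(z)} W_{r/8}^{4r}(\xi)\,d\mu(\xi).
\]
Fubini and dyadic decomposition of $\int_0^r d\tau/\tau$ convert the left-hand side of the Reifenberg hypothesis into a Riesz-type sum of pinchings. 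Telescoping via $\Ord_\phi$ monotonicity bounds this by $C\Lambda\,\mu(B_{2r}(w))$; choosing $\delta$ small (depending on $n,\eta,X_C,\Lambda$) ensures the Reifenberg hypothesis of Theorem \ref{thm:nv-cover} holds on every sub-ball, yielding $\mu(B_\sigma(y)) \leq C_R(n)\sigma^k$.

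Theorem \ref{thm:k-2nd-cover} then follows by iterating the intermediate covering dyadically from scale $S$ down to scale $s$: start from the trivial cover $\{B_S(x)\}$ of $D$, and at each pass apply Proposition \ref{prop:initial-cover} to every remaining fine ball. Each pass shrinks fine-ball radii by a factor $1/C_0$ and multiplies the running bound on $\sum s_x^k$ by $C(n)$. After $O(\log(S/s))$ passes every fine ball has radius $\leq s$ and becomes a final ball with $s_x = s$, while every coarse ball produced along the way already satisfies the order-drop condition (3). Keeping the accumulated constant bounded independently of iteration depth relies on the fact that the per-pass multiplicative factor is a purely dimensional $C_R(n)$ that can be reabsorbed by the $C_0$-shrinkage of the radii.

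The main obstacle is verifying the Reifenberg hypothesis on \emph{every} sub-ball $B_r(w) \subseteq B_2(0)$---not only on the ambient ball---which demands per-point smallness of $W_{\rho r}^{2r}(z)$ at all scales $r \leq \sigma$ simultaneously with $u$ failing $(\eta,r,k+1)$-homogeneity at $z$. Lemmas \ref{lem:move-inhom} and \ref{lem:move-inhom-large} are essential here: they propagate $(\eta,r,k+1)$-inhomogeneity across configurations of pinched points that $\rho$-effectively span a $k$-plane, so that Lemma \ref{lem:mean-flat} remains applicable at every relevant center and scale. Combined with the cone-splitting Lemmas \ref{lem:cone-split} and \ref{lem:ord-pinch}, these results prevent the fine-ball centers $y_j$ from concentrating near any $(k+1)$-dimensional affine plane in a way that would violate the $k$-dimensional packing bound, and they ensure the pinching and inhomogeneity hypotheses survive intact as one passes to sub-balls during the induction.
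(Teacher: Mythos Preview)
Your proposal does not address the stated theorem at all. Theorem \ref{thm:nv-cover} is the discrete Reifenberg theorem of Naber--Valtorta, quoted verbatim from \cite{nv17}; it is a result about arbitrary packing measures $\mu=\sum_j\omega_kr_j^k\delta_{x_j}$ satisfying a $\beta_2$-number Carleson condition, and has no harmonic map, no order function, and no target $X_C$ anywhere in its hypotheses. The paper does not prove it---it is simply cited as an external input. What you have written is instead a sketch of how to deduce Proposition \ref{prop:initial-cover} and Theorem \ref{thm:k-2nd-cover} \emph{from} Theorem \ref{thm:nv-cover}, which is a different task entirely.

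Even read as a sketch of Theorem \ref{thm:k-2nd-cover}, there is a genuine gap. You claim the accumulated packing constant stays bounded because ``the per-pass multiplicative factor is a purely dimensional $C_R(n)$ that can be reabsorbed by the $C_0$-shrinkage of the radii,'' but this is not how the argument works: each application of Proposition \ref{prop:initial-cover} multiplies $\sum r_i^k$ by some $C(n)>1$, and after $O(\log(S/s))$ iterations that constant would blow up polynomially in $S/s$. The paper avoids this by a different mechanism: in Theorem \ref{thm:k-2nd-cover}, the \emph{bad} balls from Proposition \ref{prop:initial-cover} (those where the high-order set $F_i$ lies near a $(k-1)$-plane $L_i$) are re-covered by $C(n)\rho^{1-k}$ balls of radius $4\rho r_i$, so that $\sum r^k$ picks up a factor $C(n)\rho$; choosing $\rho$ so that $C(n)\rho\le\frac12$ makes the bad-ball contribution geometric and summable. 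Your single-scale fine/coarse dichotomy, with unpinched points absorbed directly into the coarse family, does not produce this geometric gain, and your sketch gives no alternative reason for the constant to remain bounded.
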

We proceed to the intermediate covering lemma. The main difference from previously constructed arguments as in \cite{nv17} is in the proof of Claim \ref{claim:second claim}. We invoke two lemmas (one at small scales and one at large scales) to produce the desired result across all scales.  

\begin{proposition}\label{prop:initial-cover}
    Let $u:B_{64}(0) \to X_C$ satisfy \eqref{eq:Icond}, and let $0<\rho\leq\frac1{256}$ 
    and also let $\sigma,\tau$ be given such that $0<\sigma<\tau\leq\frac18$.  There exist $\delta=\delta(n,\rho,\eta,X_C,\Lambda)>0$ and $C=C(n)$ so that the following holds for any $0<\eta<\eta_0$, where $\eta_0(n)$ is chosen as in Lemma \ref{lem:hom-ord-bound}. 
    
    For any $x\in B_{1/8}(0)$, and any $D\subseteq \mathcal{S}^k_{0,\eta,\delta\sigma}(u)\cap B_\tau(x)$, let $M:=\sup_{y\in D}\Ord_\phi(y,8\tau)$.  There is a covering of $D$ by finitely many balls $B_{r_i}(x_i)$ so that
    \begin{enumerate}
        \item $r_i\geq10\rho\sigma$,
        \item $\sum_{i}(r_i)^k\leq C\tau^k$
        \item For each $i$, either $r_i\leq\sigma$, or the set
        \begin{equation}\label{eq:bad-pts}
            F_i:=D\cap B_{r_i}(x_i)\cap\{y:\Ord_\phi(y,\rho r_i)> M-\delta\}
        \end{equation}
        is contained in $B_{r_i}(x_i)\cap B_{\rho r_i}(L_i)$ for a $(k-1)$-dimensional affine subspace $L_i$.
    \end{enumerate}
\end{proposition}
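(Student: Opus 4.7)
The plan is to construct the covering via a top-down stopping-time refinement that starts with the single ball $B_\tau(x)$ and processes each current ball $B_r(y)$ according to a geometric dichotomy. At each step I would examine the bad set
\[
F(y,r) := D \cap B_r(y) \cap \{z : \Ord_\phi(z,\rho r) > M - \delta\}.
\]
If $r \leq \sigma$, or if $F(y,r)$ fails to $\rho r$-effectively span a $k$-dimensional affine subspace, then I would halt $B_r(y)$; in the non-spanning case $F(y,r)$ is by definition contained in the $\rho r$-neighborhood of a $(k-1)$-plane, which is exactly alternative (3) of the statement. Otherwise $F(y,r)$ $\rho r$-spans a $k$-plane $V$; by monotonicity of the order, every $z \in F(y,r)$ satisfies $W_{\rho r}^{2r}(z) < \delta$, so Lemma \ref{lem:cone-split} (with the minor adjustment that its hypothesis uses the same $\rho$-pinching) yields $D \cap B_r(y) \subseteq B_{2\rho r}(V)$. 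I would then refine by selecting a maximal $\rho r$-separated subset of $D \cap B_r(y)$ and covering by concentric balls of radius $5\rho r$, whose centers are forced into a $k$-plane tubular neighborhood of thickness $O(\rho r)$. Iteration terminates since radii shrink by a factor of $\rho$ per refinement and $r_i \geq 10\rho\sigma$ throughout.

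The substantive content is the summation bound $\sum_i r_i^k \leq C(n)\tau^k$, which I would extract from the discrete Reifenberg Theorem \ref{thm:nv-cover} applied to the measure $\mu := \sum_i \omega_k r_i^k \delta_{x_i}$. The Reifenberg hypothesis, integrated mean-flatness
\[
\int_{B_r(y)} \int_0^r D_\mu^k(z,s) \frac{ds}{s} \, d\mu(z) \leq \delta_R^2 \, r^k \quad \text{for } B_r(y) \subseteq B_2,
\]
is verified using Lemma \ref{lem:mean-flat}: every center $z$ in $\mathrm{supp}(\mu)$ lies in $\mathcal{S}^k_{0,\eta,\delta\sigma}(u)$ and so is not $(\eta,s,k+1)$-homogeneous for $s \geq \sigma$, while the pinching $W_{\rho s}^{2s}(z) < \delta$ holds because $z$ was preserved through the spanning-refinement rule at each ancestor scale. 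This yields the pointwise bound $D_\mu^k(z,s/8) \leq C s^{-k}\int_{B_{s/8}(z)} W_{s/8}^{4s}(\xi)\,d\mu(\xi)$, and combining Fubini, an inductive packing estimate $\mu(B_s(\xi)) \leq Cs^k$, and a dyadic decomposition in $s$ reduces the double integral to a telescoping expression bounded by $C\delta$, as in the proof of Theorem \ref{thm:k-rect-local}.

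The principal obstacle is the apparent circularity: the packing estimate $\mu(B_s(\xi)) \leq Cs^k$ that unlocks the mean-flatness step is a consequence of the bound we are trying to prove. I would resolve it by inducting on the depth of the refinement tree, applying Theorem \ref{thm:nv-cover} to the intermediate measures $\mu_t$ associated with partial covers at scale $t$ and propagating the packing bound downward one refinement at a time. Choosing $\delta = \delta(n,\rho,\eta,X_C,\Lambda)$ small enough that the constant $C\delta$ above is at most $\delta_R^2$, and exploiting $\rho \leq 1/256$ to guarantee that the refinement step is well-aligned with the Reifenberg cone-concentration, then closes the recursion and yields the desired bound $\sum_i r_i^k \leq C(n)\tau^k$.
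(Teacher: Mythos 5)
Your overall architecture --- the good/bad dichotomy driven by whether the pinched set $\rho r$-spans a $k$-plane, the use of Lemma \ref{lem:cone-split} to confine $D\cap B_r(y)$ to $B_{2\rho r}(V)$, and the discrete Reifenberg theorem applied to $\mu=\sum_i\omega_k r_i^k\delta_{x_i}$ with an induction on scales to break the circularity --- is the same as the paper's. The gap is in how you verify the hypotheses of Lemma \ref{lem:mean-flat} at the centers of the refined balls. You take the new centers to be a maximal $\rho r$-separated subset of $D\cap B_r(y)$ and assert that the pinching $W^{2s}_{\rho s}(z)<\delta$ holds at every such center ``because $z$ was preserved through the spanning-refinement rule.'' This does not follow: surviving the refinement only places $z$ in $D\cap B_r(y)\subseteq B_{2\rho r}(V)$ at each ancestor scale; it does not place $z$ in the pinched set $F(y,r)$. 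A point of $D$ with $\Ord_\phi(z,\rho r)\leq M-\delta$ can lie near $V$ at every scale while having pinching well above $\delta_{\ref{lem:mean-flat}}$ at some scale, and then the pointwise mean-flatness bound fails at that center and scale, so the Reifenberg hypothesis is not verified.

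The paper avoids this by placing the new centers \emph{on} the spanned plane $V$ (not in $D$) and then proving two transfer statements: Lemma \ref{lem:ord-pinch} shows that points of $V\cap B_r(y)$ inherit the order pinching from the spanning set (Claim \ref{claim:first claim}), and Lemmas \ref{lem:move-inhom} and \ref{lem:move-inhom-large} show that such points inherit the failure of $(\eta,t,k+1)$-homogeneity at small and large scales respectively (Claim \ref{claim:second claim}) --- the latter being necessary precisely because centers on $V$ are no longer in $\mathcal{S}^k_{0,\eta,\delta\sigma}(u)$, so the non-homogeneity you get for free from membership in the quantitative stratum is lost. Your proposal uses neither Lemma \ref{lem:ord-pinch} nor the translating-inhomogeneity lemmas, and without one of these two routes (pinching and non-homogeneity transferred to plane points, or pinching somehow established at $D$-points) the mean-flatness estimate, and hence the packing bound, is not justified. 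The rest of your outline (crude inductive packing bound, dyadic telescoping of $\int_0^t \Wbar_s(x_i)\,\frac{ds}{s}$, choosing $\delta$ so the total is below $\delta_R^2$) matches the paper and is fine once the centers are handled correctly.
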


\begin{proof}
    For convenience, we recenter so that $x=0$ and rescale so that $\tau=\frac18$. We observe that for any $y\in B_{1/8}(0)$, $\Ord_\phi(y,1)$ is bounded in terms of $\Lambda$ by Lemma \ref{lem:ben4.2}.  Throughout the proof, we shall impose a finite number of conditions on $\delta$ which allow us to construct the desired covering.  We will construct a covering of $D$ inductively, and then show that it satisfies the desired properties as long as $\delta$ is small enough.

    At the initial step, we simply choose $\mathcal C(0)=\{B_{1/8}(0)\}$.  At each step in the induction process, we will leave some balls of the cover unchanged (and refer to them as \emph{bad} balls), while the other (\emph{good}) balls in the cover will be refined.  At every step in the induction we want the following to hold:
    \begin{enumerate}
        \item[(A)] $D$ is covered by $\mathcal{C}(m):= \mathcal G(m) \cup \mathcal B(m)$ where
\begin{enumerate}
\item[(A1)] if $B_{r}(x) \in \mathcal B(m)$ then $r \geq \frac 18 (10\rho)^{m}$ and the set
  \[
    F(B_{r}(x)):=D\cap  \{y\in B_r(x):\Ord_\phi(y,\rho r)>M-\delta\}
    \]is contained in $B_{\rho r}(L)$ for some $(k-1)$-dimensional affine subspace $L$.
    \item[(A2)] if $B_{r}(x) \in \mathcal G(m)$ then $r=\frac 18(10 \rho)^m$ and the set
      \[
    F(B_{r}(x)):=D\cap  \{y\in B_r(x):\Ord_\phi(y,\rho r)>M-\delta\}
    \] $\rho r$-spans some $k$-dimensional affine subspace $V$.
\end{enumerate}
        \item[(B)] For any two balls $B_r(x),B_{r'}(x')\in \mathcal C(m)$, $B_{r/5}(x)\cap B_{r'/5}(x')=\emptyset$.
        \item[(C)] For all $B_r(x) \in \mathcal C(m)\setminus \mathcal C(0)$, $m \geq 1$, 
        \begin{enumerate}
            \item[(C1)] \label{item:pinchcenters} $\Ord_\phi(x,\rho r/5) \geq M-\epsilon$ (for an $\epsilon>0$ to be specified).
            \item[(C2)] $u$ is not $(\eta/2, 8s, k+1)$-homogeneous at $x$ whenever $8s \in [r/5, \tau/2]$.
        \end{enumerate}
    \end{enumerate}    Because we want all of our balls to have radius $r\geq 10\rho\sigma$, we stop as soon as $\frac18(10\rho)^m\leq\sigma$; we call the step when this happens $\nu$.  
    
    For the inductive construction, suppose that $B_{r}(x)\in\mathcal{G}(m)$.  
  Then $F(B_{r}(x))$ $\rho r$-spans some $k$-dimensional affine subspace $V$.  Choosing $\delta\leq \delta_{\ref{lem:cone-split}}(n,\rho, \eta, X_C, \Lambda)$, Lemma \ref{lem:cone-split} implies that $D\cap B_r(x)\subseteq B_{2\rho r}(V)$. (Note here that since $B_r(x) \cap \mathcal S_{0,\eta,\delta \sigma}^k(u) \neq \emptyset$, the hypothesis $0_X \in B_r(x)$ is satisfied.) For convenience denote the collection of good balls $\{B_i\}_{i\in I}=\mathcal G(m)$ and their corresponding $k$-dimensional subspaces $V_i$. Then clearly, with $r=\frac 18(10 \rho)^m$, 
    \[
D(m):=\bigcup_{i\in I}\left(  D\cap B_i\right)  \subset \bigcup_{i\in I}\left(D\cap B_{2\rho r}(V_i)\right).
    \]
    We now cover $D(m)$ by a collection of balls of radius $10\rho r=\frac18(10\rho)^{m+1}$ so that the corresponding balls of radius $2\rho r$ are pairwise disjoint and so that the centers of the balls are in $\bigcup_{i\in I}B_i\cap V_i$.  Call this collection of balls $\mathcal{F}(m+1)$; again, observe that all of these balls have radius $\frac18(10\rho)^{m+1}$.

    \begin{claim}\label{claim:first claim}
     
For any $\epsilon>0$, if $\delta=\delta(n,\rho, \eta,\epsilon, X_C,\Lambda)>0$ is sufficiently small, then for any $B_s(x)\in\mathcal F(m+1)$,
    \[
    \Ord_\phi(x,\rho s/5)\geq M-\epsilon.
    \]
  \end{claim}
  \begin{proof}There is a good ball $B=B_r(y)\in \mathcal C(m)$, with $s=10\rho r$ and a $k$-dimensional affine subspace $V$ which is $\rho r$-spanned by $F(B_r(y))$, and $x\in B_r(y)\cap V$.  Applying Lemma \ref{lem:ord-pinch}, as long as $0<\delta\leq \delta_{\ref{lem:ord-pinch}}(n,\rho,\epsilon,X_C,\Lambda)$,
    \begin{equation*}\label{eq:pinch-bound}
    \Ord_\phi(x,\rho s/5)>M-\epsilon.
    \end{equation*}
\end{proof}

\begin{claim}\label{claim:second claim}
For $\delta=\delta(n,\rho, \eta, X_C,\Lambda)>0$ small enough, for any $B_{s}(x) \in \mathcal F(m+1)$, $u$ is not $(\eta/3,t,k+1)$-homogeneous at $x$ for any $t\in [s/5,1/16]$.
\end{claim}  
\begin{proof}
First we choose $0<\epsilon \leq \delta_{\ref{lem:move-inhom}}(n, \rho,\eta,X_C,\Lambda)$ and $0<\delta< \min\{\delta_{\ref{lem:move-inhom}},\delta_{\ref{lem:move-inhom-large}},\delta_{\ref{claim:first claim}}\}$. 

    Since $B_{s}(x) \in \mathcal F(m+1)$, there exists $B_r(y) \in \mathcal G(m)$ such that $s=10\rho r$, $x\in B_r(y)$, and $F(B_r(y))$ $\rho r$-effectively spans an affine space $V$ (where here $F(B_r(y))$ has been defined by the chosen $\delta$). Since, by Claim \ref{claim:first claim}, $W^{1}_{\rho r}(x)<\delta_{\ref{lem:move-inhom}}$, we get the desired result immediately for $t \in [s/5, s/(5\rho)]$. 

    For scales $t\in[s/(5\rho),1/16]$, we invoke Lemma \ref{lem:move-inhom-large}, rescaling $B_t(x)$ to $B_1(0)$ instead of Lemma \ref{lem:move-inhom}, because for such $t$, the $y$ from the previous paragraph is in $B_t(x)$. We hence conclude that $u$ is not $(\eta/3,t,k+1)$-homogeneous for any of these $t$, as well.
\end{proof}

\begin{remark}
    Note that the above claims will imply (C1) and (C2) since, as we will see below, every $B_r(x) \in \mathcal C(m)\setminus \mathcal C(0)$ will come from the set $\mathcal F(j)$, for some $1 \leq j \leq m$. 
\end{remark}
    We now construct the collection $\mathcal{C}(m+1)$.  Let $\mathcal{B}_{1/5}(m)$ denote the balls with the same centers as the balls in $\mathcal{B}(m)$ and $\frac15$ the radii; by the inductive hypothesis this collection is pairwise disjoint.  We then define $\mathcal{F}'(m+1)\subset\mathcal{F}(m+1)$ where $B_r(x)\in\mathcal{F}'(m+1)$ if it does not intersect any element of $\mathcal{B}_{1/5}(m)$, and discard it otherwise.  We then define $\mathcal{C}(m+1)=\mathcal{B}(m)\cup\mathcal{F}'(m+1)$.  We still must check that $\mathcal{C}(m+1)$ is a cover of $D$.

    Any point of $D$ that is either in one of the balls of $\mathcal{B}(m)$ or one of the retained balls of $\mathcal{F}'(m+1)$ is covered.  So, suppose that $z\in D$ lies in some $B_r(x)\in\mathcal F(m+1)$ which is {\em not} an element of $\mathcal C(m+1)$.  This means that there exists some $B_{r'}(x')\in \mathcal B(m)$ so that $B_r(x)\cap B_{r'/5}(x')\neq\emptyset$.  However, because $\frac{r'}{10}>r$, $z\in B_r(x)\subseteq B_{r'}(x')$ and thus $\mathcal C(m+1)$ covers $D$.

    Finally, we divide $\mathcal{C}(m+1)$ into $\mathcal{B}(m+1)$ and $\mathcal{G}(m+1)$.  For $B_r(x)\in\mathcal{B}(m)\subset\mathcal{C}(m+1)$, we of course keep $B_r(x)\in\mathcal{B}(m+1)$.  For $B_r(x)\in\mathcal{F}'(m+1)$, if
    \[
    F(B_r(x)):=D\cap\{y\in B_r(x):\Ord_\phi(y,\rho r)>M-\delta\}
    \]
    $\rho r$-spans a $k$-dimensional affine subspace $V$, we assign $B_r(x)$ to $\mathcal{G}(m+1)$.  Otherwise, we have that $F(B_r(x))$ is contained in $B_{\rho r}(L)$ for some $(k-1)$-dimensional affine subspace $L$, and we assign $B_r(x)$ to $\mathcal{B}(m+1)$.

    The collection $\mathcal C(\nu)$ will be the desired covering, where we recall that $\frac 18(10\rho)^\nu\leq\sigma$. Each cover $\mathcal C(m)$ satisfies points (A), (B), (C) within the proof, and $\mathcal C(\nu)$ clearly satisfies points (1) and (3) from our proposition.  We now verify the packing estimate (2).
    \begin{remark}
        If $\mathcal C(\nu)=\{B_{1/8}(0)\}$, then the packing estimate is trivially satisfied. So in all of what follows we will assume that $\mathcal C(\nu)\neq\{B_{1/8}(0)\}$.
    \end{remark}

   For convenience, we now write our cover as $\mathcal C(\nu)=\{B_{5s_i}(x_i)\}$ so that the balls $B_{s_i}(x_i)$ are pairwise disjoint.  To verify the packing estimate (2), we will show that
    \[
    \sum_{i}(s_i)^k\leq C(n).
    \]
    To do this, we consider measures $\mu$ of the type controlled by Theorem \ref{thm:nv-cover} and set
    \[    \mu:=\sum_i(s_i)^k\delta_{x_i}\hspace{5mm}\text{and}\hspace{5mm}\mu_s:=\sum_{i:s_i\leq s}(s_i)^k\delta_{x_i}.
    \]
    From the definition of $\mu_s$, we have:
    \begin{itemize}
        \item If $s\leq s'$, then $\mu_s\leq\mu_{s'}$,
        \item $\mu_{1/40}=\mu$,
        \item For $r<\overline{r}:=\frac1{40}(10\rho)^\nu$, $\mu_r=0$.
    \end{itemize}We shall show that for any $m\in \mathbb N$ with $s=2^m\overline{r}$ satisfying $s<\frac1{64}$ we have
    \begin{equation}\label{eq:inductive-bound}
    \mu_s(B_s(x))\leq C_R(n)s^k 
    \end{equation}
    for all $x$, where $C_R(n)$ is the constant which appears in Theorem \ref{thm:nv-cover}.

    As we are assuming $\mathcal C(\nu)\neq \{B_{1/8}(0)\}$, the $s_i$ are at most $\frac{10\rho}{40}\leq\frac1{128}$.  The estimate \eqref{eq:inductive-bound} then implies that for any $x$, $\mu(B_{1/128}(x))\leq C_R(n)$ so that covering $B_1(0)$ by finitely many such balls, we obtain the desired estimate, $\mu(B_1(0))\leq C(n)$.

    We establish \eqref{eq:inductive-bound} by induction on $m$.  In the base case, we observe that \[\mu_{\overline{r}}(B_{\overline{r}}(x))=N(x,\overline{r})\overline{r}^k,\] where $N(x,\overline{r})$ is the number of balls $B_{s_i}(x_i)$ in the cover so that $x_i\in B_{\overline{r}}(x)$ with $s_i=\overline{r}$.  Since the $B_{s_i}(x_i)$ are pairwise disjoint, and all are contained in $B_{2\overline{r}}(x)$, there are at most $2^n$ such balls.

    The remainder of the proof is the inductive step, showing that if \eqref{eq:inductive-bound} holds for $m$, it also holds for $m+1$.  This is accomplished in two steps---a crude bound, and the full inductive bound.  We write $r=2^m\overline{r}$, so that we are assuming that $\mu_r(B_r(x))\leq C_R(n)r^k$ for every $x$, and we wish to show that $\mu_{2r}(B_{2r}(x))\leq C_R(n)(2r)^k$, again for every $x$.

    \begin{claim}\label{claim:crude-bound} (Crude Inductive Packing Bound) By induction, we have that \begin{equation*} \mu_{2r}(B_{2r}(x))\leq C(n)C_R(n)(2r)^k.\end{equation*}  
    
    \end{claim}
    \begin{proof}
        
    This is quick because $\mu_{2r}=\mu_r+\sum_{i:r<s_i\leq2r}(s_i)^k\delta_{x_i}=:\mu_r+\overline{\mu_r}$, and we can crudely bound each of these summands separately to bound $\mu_{2r}(B_{2r}(x))$.  For $\mu_r$, we cover $B_{2r}(x)$ by $C(n)$ balls $B_r(y)$, and observe that on each of these we have the bound $\mu_r(B_r(y))\leq C_R(n)r^k$.  For $\overline{\mu_r}(B_{2r}(x))$, we have that $\overline{\mu_r}(B_{2r}(x))\leq N(x,2r)(2r)^k$, where $N(x,2r)$ counts those $B_{s_i}(x_i)$ in the cover for which $x_i\in B_{2r}(x)$ and $r<s_i\leq2r$.  Because these balls are all contained in $B_{4r}(x)$, and all have radius at least $r$, $N(x,2r)\leq C(n)$ as well.

\end{proof}

    To prove \eqref{eq:inductive-bound}, we shall apply (a rescaled version of) Theorem \ref{thm:nv-cover} to the measure $\overline{\mu}:=\mu_{2r}\llcorner B_{2r}(x)$.  In particular, if we can show that for any $y\in B_{2r}(x)$ and any $0<t\leq 2r$,
    \begin{equation}\label{eq:inductive-goal}
    (I):=\int_{B_t(y)}\int_0^t{D_{\overline{\mu}}^k(z,s)}\,\frac{ds}{s}\,d\overline{\mu}(z)<\delta_R^2t^k
    \end{equation}
    (where $\delta_R$ is the constant from Theorem \ref{thm:nv-cover}), we can then conclude that $\mubar(B_{2r}(x))\leq C_R(2r)^k$, which is what we want.
 \begin{claim}For $x_i\in\text{supp}(\mu)$, define
    \begin{equation}
    \Wbar_s(x_i):=\begin{cases}
W_s^{32s}(x_i)&\text{if }s\geq s_i\\
0&\text{otherwise.}
\end{cases}
    \end{equation}
Then for all $i$ and $0<8s\leq \frac 1{16}$,
    \begin{equation}
    D_{\mubar}^k(x_i,s)\leq Cs^{-k}\int_{B_s(x_i)}\Wbar_s(\zeta)\,d\mubar(\zeta),
    \end{equation}
    where the $C=C(n,\rho,\eta,X_C,\Lambda)$ that appears here is the $C$ from Lemma \ref{lem:mean-flat}.
\end{claim}
\begin{proof}
    If $s<s_i$, this is the inequality $0\leq0$. Now suppose $s_i \leq s$. By Claim \ref{claim:first claim}, 
    \[
    W_{8\rho s}^{16s}(x_i)\leq W_{8\rho s_i}^{1}(x_i)<\epsilon.
    \]
    Further, by Claim \ref{claim:second claim},  $u$ is not $(\eta/3,8s,k+1)$-homogeneous at $x$ since $8s \in [s_i, 1/16]$.  Now if $0< \epsilon \leq \delta_{\ref{lem:mean-flat}}$, we are free to apply Lemma {\ref{lem:mean-flat}} with $r=8s$ and the result immediately follows. 

\end{proof}
Note that in the previous claim, we again had to (possibly) decrease our choice of $\delta$ to achieve Claim \ref{claim:first claim} for the desired $\epsilon$.
    
    Hence, to bound $(I)$ in \eqref{eq:inductive-goal}, it suffices to bound
    \begin{align*}
    \int_{B_t(y)}&\int_0^ts^{-k-1}\int_{B_s(z)}\Wbar_s(\zeta)\,d\mubar(\zeta)\,ds\,d\mubar(z)=\\
    &\int_0^ts^{-k-1}\int_{B_t(y)}\int_{B_s(z)}\Wbar_s(\zeta)\,d\mubar(\zeta)\,d\mubar(z)\,ds.
    \end{align*}
    We may freely intersect both domains of integration with $B_{2r}(x)$, after which both of the integrals with respect to $d\mubar$ can be taken with respect to $d\mu_s$ instead.  Indeed, if $\zeta\in\text{supp}(\mubar)\setminus\text{supp}(\mu_s)$, $\zeta=x_i$ for some $B_{s_i}(x_i)$ for which $s<s_i$, in which case $\Wbar_s(\zeta)=0$ by definition.  For the middle integral, if $z\in\text{supp}(\mubar)\setminus\text{supp}(\mu_s)$, then $z=x_i$ with $s_i>s$, so there are no points in $B_s(z)\cap\text{supp}(\mubar)$ besides $z$ itself, and moreover $\Wbar_s(z)=0$ so the innermost integral simply vanishes for such $z$. 

Therefore, our aim is in fact to bound
    \[
 \int_0^ts^{-k-1}\int_{B_t(y)}\int_{B_s(z)}\Wbar_s(\zeta)\,d\mu_s(\zeta)\,d\mu_s(z)\,ds.
    \]
    This is where we will leverage the crude bound on $\mu_{2r}$.  Changing the order of integration,
    \begin{align*}
    (I)&\leq C\int_0^ts^{-k-1}\int_{B_{t+s}(y)}\Wbar_s(\zeta)\int_{B_s(\zeta)}\,d\mu_s(z)\,d\mu_s(\zeta)\,ds\\
    &\leq C'\int_0^ts^{-k-1}s^k\int_{B_{t+s}(y)}\Wbar_s(\zeta)\,d\mu_s(\zeta)\,ds
    \end{align*}
    where we have used \eqref{eq:inductive-bound} for $s\leq r$ and Claim \ref{claim:crude-bound} and the induction hypothesis for $s>r$, because $t\leq2r$ by assumption.  We change order of integration again and see that
    \begin{equation}\label{eq:penult-bound}
    (I)\leq\int_{B_{2t}(y)}\int_0^t\Wbar_s(\zeta)\,\frac{ds}{s}\,d\mu_t(\zeta)
    \end{equation}
    and we now fix some $\zeta\in\text{supp}(\mu_t)$, so that $\zeta=x_i$ for some $i$, and recall that $\Wbar_s(x_i)=\Ord_\phi(x_i,32s)-\Ord_\phi(x_i,s)$ as long as $s_i<s$ and $\Wbar_s(x_i)=0$ otherwise.  Letting $N$ be the largest natural so that $2^Ns\leq t$, we know that $32\cdot2^{N+1}s\leq1$ because $t\leq2r\leq\frac1{64}$.  We then compute
    \begin{align*}
    \int_0^t\Wbar_s(x_i)\,\frac{ds}s&=\int_{s_i}^{t}\Wbar_s(x_i)\,\frac{ds}s=\int_{s_i}^t(\Ord_\phi(x_i,32s)-\Ord_\phi(x_i,s))\,\frac{ds}s\\
    &\leq\sum_{j=0}^N\int_{2^js_i}^{2^{j+1}s_i}(\Ord_\phi(x_i,32s)-\Ord_\phi(x_i,s))\,\frac{ds}s\\
    &\leq\sum_{j=0}^N(\Ord_\phi(x_i,32\cdot2^{j+1}s_i)-\Ord_\phi(x_i,2^js_i))\int_{2^js_i}^{2^{j+1}s_i}\frac{ds}s\\
    &=\log(2)\sum_{j=0}^N\Ord_\phi(x_i,2^{6+j}s_i)-\Ord_\phi(x_i,2^js_i)\\
    &=\log(2)\sum_{\ell=0}^5\sum_{j=0}^N\Ord_\phi(x_i,2^{\ell+j+1}s_i)-\Ord_\phi(x_i,2^{\ell+j}s_i)\\
    &=\log(2)\sum_{\ell=0}^5\Ord_\phi(x_i,2^{\ell+N+1}s_i)-\Ord_\phi(x_i,2^\ell s_i)\\
    &\leq6\log(2)(\Ord_\phi(x_i,1)-\Ord_\phi(x_i,s_i))\leq6\log(2)\epsilon
    \end{align*}

    Again using either \eqref{eq:inductive-bound} and a covering argument, or using Claim \ref{claim:crude-bound}, we see that $\mu_t(B_{2t}(y))\leq Ct^k$, where $C$ is a purely dimensional constant.  Hence, combining these two facts with \eqref{eq:penult-bound} we have that
    \[
    \int_{B_t(y)}\int_0^t{D_{\overline{\mu}}^k(z,s)}\,\frac{ds}{s}\,d\overline{\mu}(z)\leq C(n,\rho,\eta,X_C,\Lambda)\epsilon t^k.
    \]
    As long as $\epsilon=\epsilon(n,\rho,\eta,X_C,\Lambda)$ is sufficiently small (which requires a suitable choice of $\delta=\delta(n,\rho,\eta,X_C,\Lambda)$), we apply Theorem \ref{thm:nv-cover} to conclude that $\mubar(B_{2r}(x))\leq C_R(n)(2r)^k$, as desired.
\end{proof}
\begin{remark}
The above computation of $\int_0^t\Wbar_s(x_i)\frac{ds}s$ is essentially the same as the omitted computation in the proof of Theorem \ref{thm:k-rect-local}. 
\end{remark}
Using Proposition \ref{prop:initial-cover}, we now prove Theorem \ref{thm:k-2nd-cover}, following essentially identically the framework of \cite{nv17,nv18,dmsv}.

\begin{proof}[Proof of Theorem \ref{thm:k-2nd-cover}]
    We shall construct this cover by repeated applications of Proposition \ref{prop:initial-cover}, where we will fix the $\rho$ of that Proposition during this proof.  For convenience we recenter so $x=0$ and rescale so that $S=\frac 18$.

    First, we apply Proposition \ref{prop:initial-cover} with $\tau=S=\frac 18$ and $\sigma=s$.  Call the covering obtained in this manner $\mathcal{C}(0)$, and partition $\mathcal{C}(0)$ into the good balls $\mathcal{G}(0):=\{B_{r_y}(y):r_y\leq s\}$ and the bad balls $\mathcal{B}(0):=\{B_{r_y}(y):r_y>s\}$.  For each $B_{r_y}(y)\in\mathcal{B}(0)$, we know that the set $F_y=D\cap\{z\in B_{r_y}(y):\Ord_\phi(z,\rho r_y)>M-\delta\}$ is contained in $B_{\rho r_y}(L_y)$ for some $(k-1)$-dimensional affine subspace $L_y$.  Cover the set $B_{2\rho r_y}(L_y)\cap B_{r_y}(y)$ by $N=C(n)\rho^{1-k}$ balls of radius $4\rho r_y$, where $C(n)$ is a dimensional constant.  Call the collection of all balls obtained in this manner $\mathcal{C}(1)$.  We subdivide $\mathcal{C}(1)$ in a similar manner to the division of $\mathcal{C}(0)$, setting $\mathcal{G}(1):=\{B_{r_y}(y)\in\mathcal{C}(1):r_y\leq s\}$, and $\mathcal{B}(1):=\{B_{r_y}(y)\in\mathcal{C}(1):r_y>s\}$. We observe that
    \[
    \sum_{B_{r_y}(y)\in\mathcal{C}(1)}r_y^k\leq C(n)\rho^{1-k}\sum_{B_{r_y}(y)\in\mathcal{C}(0)}(\rho r_y)^k=C(n)\rho\sum_{B_{r_y}(y)\in\mathcal{C}(0)}r_y^k
    \]
    by construction.  At this point, we fix $\rho$ so small that $C(n)\rho\leq\frac12$ and use this fixed constant value for the remainder of the argument.

    Repeating this construction some finite number of times, eventually we reach a collection $\mathcal{C}(\ell)$ in which all of the balls $B_{r_y}(y)$ are of radius $r_y\leq s$ (since at each stage the maximum radius of a ``bad ball" is multiplied by $4\rho<1$).  Consider now the cover which is the union of all of these collections $\mathcal{C}:=\bigcup_{i=0}^\ell\mathcal{C}(i)$.  For convenience, we also define the collections of centers of these balls, $K(i)$, by letting $y\in K(i)$ if $y$ is the center of a ball in $\mathcal{C}(i)$.

    We now define the sets $D_y\subseteq D$ corresponding to this cover.

    \begin{itemize}
        \item For $B_{r_y}(y)\in\mathcal{G}(i)$, we set $D_y=B_{r_y}(y)\cap D$ and observe that $r_y\leq s$
        \item For $B_{r_y}(y)\in\mathcal{B}(i)$, we set $D_y=(B_{r_y}(y)\cap D)\setminus F_y$, and observe that, by construction, for each $z\in D_y$, $\Ord_\phi(z,\rho r_y)\leq M-\delta$.
    \end{itemize}

    We first show that the sets $D_y$ cover $D$.  For each $i=0,\dots,\ell$, we have by construction that \[\bigcup_{y\in K(i)} F_y\subseteq\bigcup_{z\in K(i+1)}B_{r_z}(z).\]
    Hence, we have that
    \[
    D\subseteq\bigcup_{y\in K(0)}D_y\cup\bigcup_{z\in K(1)}B_{r_z}(z)
    \]
    and applying the previous observation inductively (noting that the $F_z$ for the bad balls of $\mathcal{C}(1)$ are covered by the collection $\mathcal{C}(2)$, and so on), we see that $D$ is covered by the $D_y$, as desired.

    At this point, for every $y$, either $r_y\leq s$, or
    \[
    \sup_{z\in D_y}\Ord_\phi(z,\rho r_y)\leq M-\delta
    \]
    which differs from the desired \eqref{eq:ord-drop} by a factor of $\frac\rho8$ in the second argument of $\Ord_\phi$.  To correct for this, we simply cover these balls $B_{r_y}(y)$ by a family of balls $B_{\rho r_y/8}(x)=:B_{r_x}(x)$, and define $D_x=B_{r_x}(x)\cap D_y$.  Since we have defined $\rho$ purely in terms of the dimension $n$, the packing bound is worsened only by a dimensional constant $C(n)$, and \eqref{eq:ord-drop} holds on this new collection.

    Finally, some of the balls in our cover may have radius less than $s$, but all of them have radius at least $\rho s$, so replacing all of these balls by concentric balls of radius $s$ again worsens the packing bound by a dimensional constant.
\end{proof}

\end{document}